\pgfplotsset{compat=1.5}
\newtheorem{theorem}{Theorem}
\newtheorem{proposition}{Proposition}[section]
\newtheorem{lemma}[proposition]{Lemma}
\newtheorem{corollary}[proposition]{Corollary}
\theoremstyle{remark}
\newtheorem*{remark}{Remark}
\theoremstyle{definition}
\newtheorem*{definition}{Definition}
\renewcommand{\MR}[1]{}
\newif\ifdetails
\newcommand{\TODO}[1]%
{\par\fbox{\begin{minipage}{0.9\linewidth}\textbf{TODO:} #1\end{minipage}}\par}
\newcommand{\DLMF}[2]{\cite[\href{http://dlmf.nist.gov/#1.E#2}{#1.#2}]{NIST:DLMF:v1.0.13}}
\newcommand{\details}[1]%
{\ifdetails\par\fbox{\begin{minipage}{0.9\linewidth}\textit{Detail:}
      #1\end{minipage}}\par\fi}
\let\Re\relax
\let\Im\relax
\newcommand{\C}[0]{\mathbb{C}}
\newcommand{\R}[0]{\mathbb{R}}
\newcommand{\Z}[0]{\mathbb{Z}}
\renewcommand{\P}[0]{\mathbb{P}}
\newcommand{\E}[0]{\mathbb{E}}
\newcommand{\V}[0]{\mathbb{V}}
\newcommand{\N}[0]{\mathbb{N}}
\DeclareMathOperator{\Re}{Re}
\DeclareMathOperator{\Im}{Im}
\DeclareMathOperator{\Res}{Res}
\DeclarePairedDelimiter{\abs}{\lvert}{\rvert}
\DeclarePairedDelimiter{\iverson}{[}{]}
\newcommand{\convdistr}{\overset{d}{\longrightarrow}}
\newcommand{\innernode}{\tikz{\node[draw, circle, fill, inner sep=2.5pt] {};}}
\newcommand{\filledcirc}{\tikz{\node[draw, circle, fill, inner sep=1.5pt] {};}}
\title{Fringe Analysis of Plane Trees Related to Cutting and Pruning}
\author[B.~Hackl]{Benjamin Hackl}
\author[C.~Heuberger]{Clemens Heuberger}
\address[Benjamin Hackl, Clemens Heuberger]{Institut f\"ur Mathematik,
  Alpen-Adria-Uni\-ver\-si\-t\"at Klagenfurt, Universit\"atsstra\ss e
  65--67, 9020 Klagenfurt, Austria}
\email{\href{mailto:benjamin.hackl@aau.at}{benjamin.hackl@aau.at}}
\email{\href{mailto:clemens.heuberger@aau.at}{clemens.heuberger@aau.at}}
\thanks{B.~Hackl and C.~Heuberger are supported by the Austrian
  Science Fund (FWF): P~24644-N26 and by the Karl Popper Kolleg
  ``Modeling-Simulation-Optimization'' funded by the Alpen-Adria-Universit\"at Klagenfurt
  and by the Carinthian Economic Promotion Fund (KWF)}
\author[S.~Kropf]{Sara Kropf}
\address[Sara Kropf]{Institute of Statistical Science,
  Academia Sinica, 115 Taipei, Taiwan}
\email{\href{mailto:sarakropf@stat.sinica.edu.tw}{sarakropf@stat.sinica.edu.tw}}
\author[H.~Prodinger]{Helmut Prodinger}
\address[Helmut Prodinger]{Department of Mathematical
  Sciences, Stellenbosch University, 7602 Stellenbosch,
 South Africa}
\email{\href{mailto:hproding@sun.ac.za}{hproding@sun.ac.za}}
\thanks{H.~Prodinger is supported by an incentive grant of the National Research
  Foundation of South Africa. Part of this author's work was done while he visited
  Academia Sinica. He thanks the Institute of Statistical Science for its hospitality.}
\thanks{This is the full version of the extended abstract~\cite{Hackl-Kropf-Prodinger:2017:iterat-cuttin}.}
\keywords{Plane trees, pruning, tree reductions, central limit theorem, Narayana polynomials}
\subjclass[2010]{
05A16; 
05C05 
05A15 
05A19 
60C05} 
\begin{document}

\begin{abstract}
Rooted plane trees are reduced by four different operations on the
fringe. The number of surviving nodes after reducing the tree
repeatedly for a fixed
number of times is asymptotically analyzed. The four different operations include
cutting all or only the leftmost leaves or maximal paths. This generalizes the concept
of pruning a tree.

The results include exact expressions and asymptotic expansions 
for the expected value and the variance as well as central limit theorems.
\end{abstract}
\maketitle

\section{Introduction}
Plane trees are among the most interesting elementary combinatorial objects; they
appear in the literature under many different names such as ordered trees, planar trees,
planted plane trees, etc. They have been analyzed under various aspects, especially
due to their relevance in Computer Science. Two particularly well-known quantities are
the height, since it is
equivalent to the stack size needed to explore binary (expression) trees, and the pruning
number (pruning index), since it is equivalent to the register function (Horton-Strahler
number) of binary trees. Several results for the height of plane trees can be found
in \cite{Bruijn-Knuth-Rice:1972,Flajolet-Odlyzko:1982, Prodinger:1983:height-planted}, for the register
function, we refer to \cite{Vauchaussande:1985:thesis,
  Flajolet-Raoult-Vuillemin:1979:register, Kemp:1980:note-stack-size}, and for results on
the connection between the register function and the  pruning number to
\cite{Vauchaussande:1985:thesis, Zeilberger:1990:bijection}.

Reducing (cutting-down) trees has also been a popular research theme during the last decades 
\cite{Janson:2006:random-cutting, Meir-Moon:1970:cutting-random-trees,
  Panholzer:2006:cutting-trees}: according to a certain probabilistic model, a given
tree is reduced until a certain condition is satisfied (usually, the root is isolated).

In the present paper, the point of view is slightly different, as we
reduce a tree in a
completely deterministic fashion at the leaves until the tree has no
more edges. All these reductions take place on the fringe, meaning that only (a subset of)
leaves (and some adjacent structures) are removed. We consider four different models:

\begin{itemize}
\item[--] In one round, all leaves together with the corresponding edges are
  removed (see Section~\ref{sec:cutting-leaves}).
\item[--] In one round, all maximal paths (linear graphs), with the leaves on one end, are
  removed (see Section~\ref{sec:cutting-paths}). This process is called pruning.
\item[--] A leaf is called an old leaf if it is the leftmost sibling of its parents. This
  concept was introduced in \cite{Chen-Deutsch-Elizalde:2006:old-leaves}. In one round,
  only old leaves are removed (see Section~\ref{sec:cutting-old-leaves}).
\item[--] The last model deals with pruning old paths. There might be several
  interesting models related to this; the one we have chosen here is that in one round
  maximal paths are removed, under the condition that each of their nodes is the leftmost
  child of their parent node (see Section~\ref{sec:cutting-old-paths}).
\end{itemize}

The four tree reductions are illustrated in
Figure~\ref{fig:reduction-illustration}. We describe these reductions
more formally in the corresponding sections.

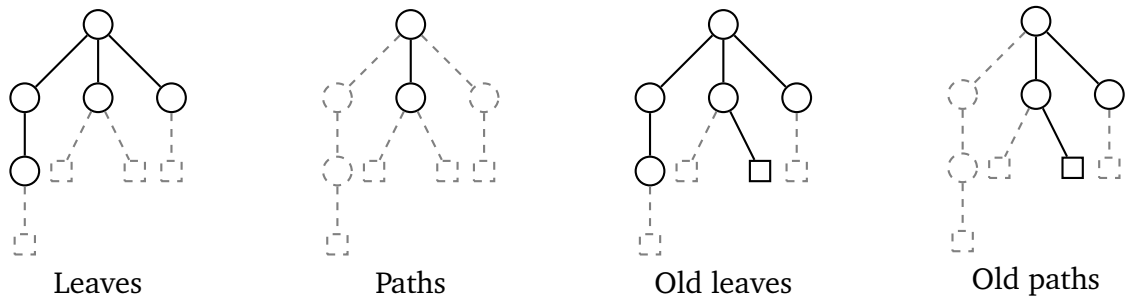
\begin{figure}[ht]
  \centering
  \begin{center}
    \begin{minipage}{0.23\linewidth}
      \centering
      \begin{tikzpicture}[thick, scale=0.65]
        \node[draw, circle] {}
        child {node[draw, circle] {} child {node[draw, circle] {}
            child[gray, dashed] {node[draw,
              rectangle] {}}}}
        child {node[draw, circle] {}
          child[gray, dashed] {node[draw, rectangle] {}}
          child[gray, dashed] {node[draw, rectangle] {}}
        }
        child {node[draw, circle] {} child[gray, dashed] {node[draw, rectangle] {}}};
      \end{tikzpicture}\\
      Leaves
    \end{minipage}\hfill
    \begin{minipage}{0.23\linewidth}
      \centering
      \begin{tikzpicture}[thick, scale=0.65]
        \node[draw, circle] {}
        child[gray, dashed] {node[draw, circle] {} child {node[draw, circle] {} child{node[draw,
              rectangle] {}}}}
        child {node[draw, circle] {}
          child[gray, dashed] {node[draw, rectangle] {}}
          child[gray, dashed] {node[draw, rectangle] {}}
        }
        child[gray, dashed] {node[draw, circle] {} child {node[draw, rectangle] {}}};
      \end{tikzpicture}\\
      Paths
    \end{minipage}\hfill
    \begin{minipage}{0.23\linewidth}
      \centering
      \begin{tikzpicture}[thick, scale=0.65]
        \node[draw, circle] {}
        child {node[draw, circle] {} child {node[draw, circle] {}
            child[gray, dashed] {node[draw,
              rectangle] {}}}}
        child {node[draw, circle] {}
          child[gray, dashed] {node[draw, rectangle] {}}
          child[] {node[draw, rectangle] {}}
        }
        child {node[draw, circle] {} child[gray, dashed] {node[draw, rectangle] {}}};
      \end{tikzpicture}\\
      Old leaves
    \end{minipage}\hfill
    \begin{minipage}{0.23\linewidth}
      \centering
      \begin{tikzpicture}[thick, scale=0.65]
        \node[draw, circle] {}
        child[gray, dashed] {node[draw, circle] {} child {node[draw, circle] {} child{node[draw,
              rectangle] {}}}}
        child {node[draw, circle] {}
          child[gray, dashed] {node[draw, rectangle] {}}
          child {node[draw, rectangle] {}}
        }
        child {node[draw, circle] {} child[gray, dashed] {node[draw, rectangle] {}}};
      \end{tikzpicture}\\
      Old paths
    \end{minipage}
  \end{center}
  \caption{Removal of (old) leaves / paths}
  \label{fig:reduction-illustration}
\end{figure}

The first model is clearly related to the height of the plane tree, and the second one to
the Horton-Strahler number via the pruning index \cite{Zeilberger:1990:bijection,
  Viennot:2002:strah-dyck}. While there are no surprises about the number of rounds
that the process takes here, we are interested in how the fringe develops. The number of leaves
and nodes altogether in the remaining tree after a  fixed number of reduction rounds is
the main parameter analyzed in this paper.

For the sake of simplicity, we will use the same notation for each of the following
reduction analyses. In case we need to compare objects from two different sections, we
will distinguish them by adding appropriate superscripts.

The random variable $X_{n,r}$  models the tree size after reducing a plane tree
of size $n$ (that is chosen uniformly at random among all trees with $n$ nodes) $r$-times
iteratively according to one of our four reductions. If a tree does not ``survive'' $r$
rounds of reductions, we consider the size of the resulting tree to be $0$. In particular,
for  $r=0$, the given plane tree is not changed and $X_{n,0}=n$.

As we will see, a key aspect of the analysis of $X_{n,r}$ is the translation of the
algorithmic description of the reduction into an operator $\Phi$ that acts on the corresponding
generating functions.

In Section~\ref{sec:cutting-leaves}, the reduction cutting away all leaves from the tree is
discussed. Section~\ref{sec:cut-leaves:preliminaries} contains all necessary auxiliary
concepts required in order to study the $r$-fold application of this
reduction. In
Section~\ref{sec:cut-leaves:expansion}, we determine the operator $\Phi$ acting on the
corresponding generating function explicitly and prove some direct consequences. Then, in
Section~\ref{sec:cut-leaves:analysis} we carry out the analysis of the behavior of
$X_{n,r}$ by computing explicit expressions and asymptotic expansions for the factorial
moments of $X_{n,r}$ as well as a central limit theorem.

Section~\ref{sec:cutting-paths} is devoted to the study of the
reduction that cuts away all
paths. As we will see in Section~\ref{sec:paths:results}, we can actually obtain all
results regarding the behavior of $X_{n,r}$ as consequences of the corresponding results
in Section~\ref{sec:cutting-leaves}. In Section~\ref{sec:paths:totalnumber}, we analyze the
asymptotic behavior of the expected number of paths required to construct a plane tree of
size $n$, i.e.\ the number of paths we can cut away until the tree cannot be reduced any
further.

Sections~\ref{sec:cutting-old-leaves} and~\ref{sec:cutting-old-paths} are devoted to the
analysis of reductions removing only leftmost leaves and leftmost paths from the tree,
respectively. In particular, in Section~\ref{sec:old-paths:total}, we study the total
number of old paths that can be removed from a tree until it cannot be reduced any
further.

On a general note, the computationally heavy parts of this paper have been carried out
with the open-source computer mathematics system SageMath~\cite{SageMath:2016:7.4}, and
the corresponding worksheets are available for download. In particular, there are the
following files:
\begin{itemize}
\item \href{https://benjamin-hackl.at/downloads/treereductions/treereductions.ipynb}{\texttt{treereductions.ipynb}}
  for most of the asymptotic computations in Sections~\ref{sec:cutting-leaves},
  \ref{sec:cutting-paths}, and \ref{sec:cutting-old-leaves},
\item \href{https://benjamin-hackl.at/downloads/treereductions/old_paths.ipynb}{\texttt{old\_paths.ipynb}}
  for most of the asymptotic computations in Section~\ref{sec:cutting-old-paths},
\item \href{https://benjamin-hackl.at/downloads/treereductions/factorial_moments_leaves.ipynb}{\texttt{factorial\_moments\_leaves.ipynb}}
  for computation of the factorial moments in Theorem~\ref{thm:cut-leaves},
\item \href{https://benjamin-hackl.at/downloads/treereductions/factorial_moments_old_paths.ipynb}{\texttt{factorial\_moments\_old\_paths.ipynb}}
  for computation of the factorial moments in Theorem~\ref{thm:moments-asy-old-paths}.
\end{itemize}
Additionally, in order to run these computations yourself, you also need to download the
following two utility files:
\begin{itemize}
\item \href{https://benjamin-hackl.at/downloads/treereductions/identities_common.py}{\texttt{identities\_common.py}},
\item \href{https://benjamin-hackl.at/downloads/treereductions/conditional_substitution.py}{\texttt{conditional\_substitution.py}}.
\end{itemize}
All these files including some instructions on how to use them can be found at \url{https://benjamin-hackl.at/publications/treereductions/}.

\section{Cutting Leaves}\label{sec:cutting-leaves}

\subsection{Preliminaries}\label{sec:cut-leaves:preliminaries}

In this part of the paper we investigate the effect of the tree
reduction that cuts away all
leaves from a given tree. However, before we can do so, we require some auxiliary concepts,
which we discuss in this section. Most importantly, we need a generating function counting
plane trees with respect to their number of inner nodes and leaves, which is intimately
linked to Narayana numbers. The generating function presented in the following proposition
is actually well-known (see, e.g.~\cite[Example III.13]{Flajolet-Sedgewick:ta:analy}).

\begin{proposition}\label{proposition:gf-narayana}
  The generating function $T(z,t)$ which enumerates plane trees with respect to their
  internal nodes (marked by the variable $z$) and leaves (marked by $t$) is given
  explicitly by
  \begin{equation}\label{eq:rootedplane:ogf}
    T(z,t) = \frac{1 - (z-t) - \sqrt{1 - 2(z+t) + (z-t)^{2}}\,}{2}.
  \end{equation}
\end{proposition}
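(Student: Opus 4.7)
The plan is to derive the claimed closed form from the symbolic method, in the spirit of Flajolet--Sedgewick's analytic combinatorics framework, so the proof reduces to a quadratic equation plus a choice of branch.

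First, I would translate the recursive structure of plane trees into a functional equation. A plane tree is either a single leaf (contributing $t$) or a root which is an internal node (contributing $z$) attached to a nonempty ordered sequence of plane trees. The sequence construction yields
\begin{equation*}
  T(z,t) = t + z\cdot\frac{T(z,t)}{1 - T(z,t)},
\end{equation*}
where the factor $T/(1-T)$ encodes a nonempty sequence of subtrees. Convergence as a formal power series is immediate because each tree has only finitely many nodes, so the fixed-point equation determines $T(z,t)$ uniquely among formal power series with $T(0,0)=0$.

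Next, I would clear the denominator. Multiplying through by $1-T$ and collecting terms gives the quadratic
\begin{equation*}
  T^2 + (z - t - 1)\,T + t = 0.
\end{equation*}
Applying the quadratic formula and simplifying the discriminant,
\begin{equation*}
  (1 - z + t)^2 - 4t = 1 - 2(z+t) + (z-t)^2,
\end{equation*}
one obtains
\begin{equation*}
  T(z,t) = \frac{1 - (z-t) \pm \sqrt{1 - 2(z+t) + (z-t)^2}\,}{2}.
\end{equation*}

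Finally, I would fix the branch. Setting $z = t = 0$ forces $T(0,0) = 0$, which selects the minus sign (the plus sign would yield $T(0,0) = 1$). This gives exactly \eqref{eq:rootedplane:ogf}. There is no real obstacle here, since everything is straightforward symbolic computation; the only subtle point to mention is that both the discriminant and the branch choice make sense as formal power series in $z$ and $t$ (the series under the square root has constant term $1$, so its principal square root is a well-defined power series), justifying the branch selection purely combinatorially without analytic considerations.
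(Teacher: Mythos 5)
Your proposal is correct and follows essentially the same route as the paper: both derive the functional equation $T(z,t) = t + zT(z,t)/(1-T(z,t))$ from the symbolic decomposition of a plane tree into a leaf or a root with a nonempty sequence of subtrees, then solve the quadratic and select the branch with $T(0,0)=0$. Your additional remarks on the discriminant simplification and the formal-power-series justification of the branch choice are fine elaborations of steps the paper leaves implicit.
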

\begin{proof}
  This can be obtained directly from the symbolic equation describing the
  combinatorial class of plane trees $\mathcal{T}$, which is illustrated in
  Figure~\ref{fig:rootedplane:symbolic}. In particular, $\square$ and $\innernode$
  represent leaves and internal nodes, respectively.
  \begin{figure}[ht]
    \centering
    \begin{tikzpicture}
      \node (add) {$\mathcal{T}\quad=\quad\square\quad+\quad\sum\limits_{n\geq1}$};
      \node[right of=add, circle, fill=black, inner sep=3pt, xshift=7em, yshift=2em] (right-V) {};
      \node[below of=right-V, yshift=-1.5em, xshift=-4em] (1) {$\mathcal{T}$};
      \node[below of=right-V, yshift=-1.5em, xshift=-2em] (2) {$\mathcal{T}$};
      \node[below of=right-V, yshift=-1.5em, xshift=0em]  (3) {$\mathcal{T}$};
      \node[below of=right-V, yshift=-1.5em, xshift=2em, gray] (4) {$\cdots$};
      \node[below of=right-V, yshift=-1.5em, xshift=4em] (5) {$\mathcal{T}$};
      \draw (right-V) -- (1) (right-V) -- (2) (right-V) -- (3) (right-V) -- (5);
      \draw[dotted, gray] (right-V) -- (4);
      \draw [
        thick,
        decoration={
        brace,
        mirror,
        raise=1em
        },
      decorate
      ] (1.center) to node (h) {} (5.center);
      \node [below of=h] {$n$};
    \end{tikzpicture}
    \caption{Symbolic equation for plane trees}
    \label{fig:rootedplane:symbolic}
  \end{figure}

  The symbolic equation translates into the functional equation
  \[ T(z,t) = t + \frac{z T(z,t)}{1 - T(z,t)},  \]
  which yields~\eqref{eq:rootedplane:ogf} after solving for $T(z,t)$ and choosing the
  appropriate branch.
\end{proof}

In the context of plane trees, the so-called \emph{Narayana numbers} count the
number of trees with a given size and a given number of leaves
(cf.~\cite{Drmota:2015:trees-handbook}). As these numbers will appear throughout the
entire paper, we introduce them formally and investigate some properties within the
following statements.

\begin{definition}\label{def:narayana-poly}
  The \emph{Narayana numbers} are defined as
  \begin{equation*}
    N_{n,k}=\frac{1}{n}\binom{n}{k-1}\binom{n}{k}
  \end{equation*}
for $1\leq n$ and  $1\leq k\leq n$, and $N_{0,0}=1$. All other indices give
$N_{n,k}=0$. Combinatorially, for $n\geq 1$ the Narayana number $N_{n,k}$ corresponds to the number of
plane trees with $n$ edges (i.e.\ $n+1$ nodes) and $k$ leaves.
  The \emph{Narayana polynomials} are defined as
  \begin{equation*}
    N_{n}(x)=\sum_{k=1}^{n}N_{n,k}x^{k-1}
  \end{equation*}
for $n\geq1$ and $N_{0}(x)=1$, and the \emph{associated Narayana polynomials} are defined
as
\[ \tilde{N}_{n}(x) = x\cdot N_{n}(x)\]
for $n\geq 0$. Note that
\begin{equation*}
N_{n}(1) = \tilde{N}_{n}(1) = C_{n} = \frac{1}{n+1}\binom{2n}{n}
\end{equation*}
is the $n$th Catalan number.
\end{definition}

\begin{remark}
  The generating function $\frac{1}{z} T(z,z) = \frac{1 - \sqrt{1 - 4z}\,}{2z}$ enumerates Catalan
  numbers, see~\cite[Theorem~3.2]{Drmota:2009:random}, and
  the generating function $T(z,tz)$ enumerates Narayana numbers
  \begin{equation}\label{eq:gf-narayana}
    T(z,tz)=zt+\sum_{n\geq2}\sum_{k=1}^{n-1}N_{n-1,k}z^{n}t^{k} = \sum_{n\geq 1} z^{n} \tilde{N}_{n-1}(t).
  \end{equation}
  We will frequently use this relation in the form
  \begin{equation}\label{eq:gf-narayana:quotient}
    T(z, t) = \sum_{n\geq 1} z^{n} \tilde{N}_{n-1}\Bigl(\frac tz\Bigr).
  \end{equation}
  Furthermore, it is easily checked that $T(z, tz)$ satisfies the ordinary differential
  equation
  \[ (1  - 2(t+1)z + (1-t)^{2} z^{2}) \frac{\partial}{\partial z} T(z, tz) - ((1-t)^{2} z -
    t - 1) T(z, tz) = t (1 + z - tz). \]
  Extracting the coefficient of $z^{n+2}$ then yields the recurrence relation
  \begin{equation}
    \label{eq:narayana-recursion}
    (n+3) \tilde{N}_{n+2}(t) - (2n+3)(t+1)\tilde{N}_{n+1}(t) + n (t-1)^{2} \tilde{N}_{n}(t) = 0
  \end{equation}
  for $n\ge 0$.
\end{remark}

The following proposition gives another useful property of associated Narayana polynomials.
\begin{proposition}\label{prop:narayana-reverse}
  Let $n\ge 0$, then we have the relation
  \begin{equation}\label{eq:Narayana-reverse}
    t^{n+1}\tilde N_n\Bigl(\frac1t\Bigr) = (1-t)\iverson{n=0} + \tilde N_n(t).
  \end{equation}
\end{proposition}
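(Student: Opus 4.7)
My plan is to prove this identity by exploiting the palindromic symmetry of the Narayana numbers $N_{n,k} = N_{n,n+1-k}$ for $1 \le k \le n$, which follows directly from the explicit formula $N_{n,k} = \frac{1}{n}\binom{n}{k-1}\binom{n}{k}$ since swapping $k \leftrightarrow n+1-k$ merely exchanges the two binomial factors.

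For the main case $n \ge 1$, I would start from the definition $\tilde{N}_n(t) = \sum_{k=1}^{n} N_{n,k} t^{k}$ and compute
\[ t^{n+1}\tilde{N}_n(1/t) = \sum_{k=1}^{n} N_{n,k}\, t^{n+1-k}. \]
Reindexing via $j = n+1-k$ turns this into $\sum_{j=1}^{n} N_{n,n+1-j}\, t^{j}$, and the palindromic symmetry mentioned above then identifies this sum with $\tilde{N}_n(t)$. Since the Iverson bracket on the right-hand side vanishes for $n \ge 1$, this establishes the claim in that range.

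The case $n = 0$ has to be handled separately, and this is precisely where the correction term $(1-t)\iverson{n=0}$ enters. Here $\tilde{N}_0(t) = t$, so the left-hand side evaluates to $t \cdot (1/t) = 1$, while $\tilde{N}_0(t) = t$ alone; the discrepancy is exactly $1 - t$, matching the correction term. Conceptually, the reason for this exceptional case is that $\tilde{N}_0$ is not palindromic in the same way as the higher $\tilde{N}_n$—its one nonzero coefficient sits at degree $1$ rather than being symmetric about the middle.

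There is no real obstacle: the identity is a one-line computation once the symmetry $N_{n,k} = N_{n,n+1-k}$ is invoked. If one prefers to avoid splitting cases, an alternative would be to verify the identity at the level of the generating function $T(z,tz) = \sum_{n \ge 1} z^n \tilde{N}_{n-1}(t)$ from~\eqref{eq:gf-narayana}, by checking directly from the closed form~\eqref{eq:rootedplane:ogf} that $T(z,tz)$ and $t \cdot T(tz, z)$ differ by a simple explicit term accounting for the $n=0$ discrepancy; but the coefficient-level argument is cleaner and is the route I would take.
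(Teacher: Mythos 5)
Your proof is correct, but it takes a genuinely different route from the paper. You work directly at the coefficient level: from the explicit formula $N_{n,k}=\frac1n\binom{n}{k-1}\binom{n}{k}$ you read off the palindromic symmetry $N_{n,k}=N_{n,n+1-k}$, reindex the sum $t^{n+1}\tilde N_n(1/t)=\sum_{k=1}^n N_{n,k}t^{n+1-k}$, and check $n=0$ by hand; all steps are sound, and the $n=0$ discrepancy of $1-t$ is handled correctly. The paper instead extracts the coefficient of $z^{n+1}$ from the generating-function identity $T(tz,z)=T(z,tz)+(1-t)z$ (which is verified algebraically from the closed form of $T$), and then supplements this with a combinatorial proof: via the rotation correspondence, leaves and inner nodes of a plane tree correspond to left and right leaves of a binary tree, and mirroring the binary tree swaps them, giving a bijection between trees of size $n$ with $k$ leaves and those with $k$ inner nodes. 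Your argument is shorter and more elementary (no manipulation of square roots, no bijection), at the cost of relying on the closed form of $N_{n,k}$ rather than explaining the symmetry; the paper's bijective argument buys extra insight that is reused later (the remark after Proposition~\ref{prop:narayana-derivative} recycles the left-leaf observation), and the generating-function form of the identity is the shape in which the relation is actually applied elsewhere in the paper. Your closing suggestion of verifying $T(z,tz)$ against $t\cdot T(tz,z)$ is essentially the paper's first proof.
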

\begin{proof}
  This relation follows from extracting the coefficient of $z^{n+1}$ from the identity $T(tz, z) = T(z,
  tz) + (1-t)z$ with the help of~\eqref{eq:gf-narayana:quotient}.

  While it is straightforward to prove that the identity is valid by means of
  algebraic manipulation, we also give a combinatorial proof.

  From a combinatorial point of view, both generating functions $T(tz, z)$ and $T(z, tz)$
  enumerate plane trees where $z$ marks the tree size, the only difference is that
  the variable $t$ enumerates inner nodes in $T(tz, z)$ and leaves in $T(z,tz)$. We want
  to show that for trees of size $n \geq 2$, these two classes are equal, resulting in
  $T(tz, z) - z = T(z, tz) - tz$.

  To construct an appropriate bijection between the class of trees of size $n$ with $k$
  leaves and the class of trees of size $n$ with $k$ inner nodes we need to have a closer
  look at the well-known \emph{rotation
    correspondence}~\cite[I.5.3]{Flajolet-Sedgewick:ta:analy}, which is a bijection
  between plane trees of size $n$ and binary trees with $n-1$ inner nodes. In fact, the
  leaves in the binary tree are strongly related to the leaves and inner nodes of the
  original tree:
  \begin{itemize}
  \item[--] Left leaves in the binary tree are only attached to those nodes whose
    companions in the plane tree have no children, i.e., to those who correspond to
    leaves in the plane tree.
  \item[--] Right leaves, on the other hand, are attached to nodes whose companion nodes
    in the plane tree have no sibling right of them. This means that for every node with
    children, i.e., for every inner node, there is precisely one rightmost child and thus
    precisely one right leaf in the binary tree.
  \end{itemize}
  The bijection between the two tree classes can now be described as follows: given some
  tree of size $n$ and $k$ leaves, apply the rotation correspondence in order to obtain a
  binary tree. Then mirror the binary tree by swapping all left and right
  children. Transform this mirrored tree back by means of the inverse rotation
  correspondence, and the result is a plane tree of size $n$ and $k$ inner nodes as
  mirroring the binary tree swapped the number of left and right leaves in the tree. This
  proves the proposition.
\end{proof}

Derivatives of the associated Narayana polynomials defined above will occur within the
analysis of a reduction model later, which is why we compute some special values in the
following proposition.
\begin{proposition}\label{prop:narayana-derivative}
  Evaluating the $r$th derivative of the associated Narayana polynomials at $1$, i.e.\
  $\tilde{N}_{n}^{(r)}(1)$, gives the number of trees with $n+1$ nodes where precisely $r$
  leaves are selected and labeled from $1$ to $r$. In particular, for $n\geq 1$ we have
  \[ \tilde{N}_{n}'(1) = \frac{1}{2}\binom{2n}{n},\qquad \tilde{N}_{n}''(1) =
    (n-1)\binom{2n-2}{n-1}. \]
\end{proposition}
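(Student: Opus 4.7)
The plan is to handle the combinatorial interpretation and the two explicit evaluations separately, working directly from the series definition $\tilde{N}_n(x) = \sum_{k=1}^n N_{n,k}\, x^k$ given in Definition~\ref{def:narayana-poly}. There is no need to go through the bivariate generating function $T(z,t)$.

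For the combinatorial interpretation, I would differentiate the series $r$ times term by term and set $x = 1$, which produces
\[ \tilde{N}_n^{(r)}(1) = \sum_{k\geq r} N_{n,k}\, k(k-1)\cdots(k-r+1). \]
Since $N_{n,k}$ enumerates plane trees with $n+1$ nodes and exactly $k$ leaves, and since the falling factorial $k(k-1)\cdots(k-r+1)$ is precisely the number of ways of selecting an ordered $r$-tuple (equivalently, choosing $r$ leaves and labelling them $1, \dots, r$) from the $k$ leaves of such a tree, the claim follows immediately.

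For the two closed-form values, the key tool is the absorption identity $k\binom{n}{k} = n\binom{n-1}{k-1}$. For $\tilde{N}_n'(1) = \sum_k k\, N_{n,k}$, one application of the identity to the $\binom{n}{k}$-factor cancels the prefactor $\tfrac{1}{n}$ in $N_{n,k}$, leaving $\sum_{k=1}^n \binom{n}{k-1}\binom{n-1}{k-1}$. After the substitution $j = k-1$ and the symmetry $\binom{n-1}{j} = \binom{n-1}{n-1-j}$, a single invocation of Vandermonde's convolution yields $\binom{2n-1}{n-1} = \tfrac{1}{2}\binom{2n}{n}$.

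For $\tilde{N}_n''(1) = \sum_k k(k-1)\, N_{n,k}$, I would perform the same first step and then apply the analogous absorption identity $j\binom{n-1}{j} = (n-1)\binom{n-2}{j-1}$ to the remaining factor of $j = k-1$. One more Vandermonde convolution produces $(n-1)\binom{2n-2}{n-1}$. The only point requiring genuine care is the index bookkeeping (and checking the boundary cases $n=1, 2$ where the sums degenerate), which is routine; I do not anticipate any conceptual obstacle.
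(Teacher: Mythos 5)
Your proof is correct. The combinatorial interpretation is handled exactly as in the paper: differentiate $\tilde{N}_n(x)=\sum_{k=1}^n N_{n,k}x^k$ term by term, evaluate at $x=1$ to get $\sum_k N_{n,k}k^{\underline{r}}$, and read the falling factorial as the number of ordered selections of $r$ leaves. For the two explicit values, however, you take a genuinely different route. The paper obtains them by differentiating the bivariate generating function identity $T(z,tz)=\sum_{n\ge 1}z^n\tilde{N}_{n-1}(t)$ $r$ times with respect to $t$, setting $t=1$, and extracting the coefficient of $z^{n+1}$ from the resulting algebraic function. You instead work directly with the closed form $N_{n,k}=\frac1n\binom{n}{k-1}\binom{n}{k}$, using the absorption identity $k\binom{n}{k}=n\binom{n-1}{k-1}$ to kill the factor $\frac1n$ and the falling-factorial weight, and then a single Vandermonde convolution (after the symmetry $\binom{n-1}{j}=\binom{n-1}{n-1-j}$) to close the sum; I checked that both evaluations come out as claimed, including the small cases $n=1,2$. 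Your argument is more elementary and self-contained, needing neither the generating function nor singularity-free coefficient extraction; the paper's version has the advantage that higher derivatives $\tilde{N}_n^{(r)}(1)$ for general $r$ follow mechanically from one differentiation of the closed form for $T$, whereas the binomial-identity route requires a fresh (if routine) absorption-plus-Vandermonde computation for each $r$.
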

\begin{proof}
  The combinatorial interpretation follows immediately by rewriting
  \[ \tilde{N}_{n}^{(r)}(1) = \sum_{k=1}^{n} N_{n,k} k^{\underline{r}},  \]
  where we used the notion $k^{\underline{r}} = k (k-1) \cdots (k-r+1)$ for the falling
  factorial.
  Explicit values can be obtained by differentiating~\eqref{eq:gf-narayana} $r$-times with
  respect to $t$, then setting $t=1$ and extracting the coefficient of $z^{n+1}$.
\end{proof}
\begin{remark}
  By the combinatorial interpretation of Proposition~\ref{prop:narayana-derivative} we
  find that $\tilde{N}_{n}'(1) = \frac{1}{2}\binom{2n}{n}$ enumerates
  the number of leaves, summed over all trees with $n+1$ nodes. At the same time, as there
  are $C_{n} = \frac{1}{n+1} \binom{2n}{n}$ such trees, the total number of nodes in these
  trees is $\binom{2n}{n}$. This implies that exactly half of all nodes in all trees of given size
  are leaves!

  In fact, this interpretation also motivates a second, purely combinatorial proof of the
  explicit value of $\tilde{N}_{n}'(1)$: the bijection correspondence maps trees of size
  $n+1$ to binary trees with $n$ inner nodes. In the proof of
  Proposition~\ref{prop:narayana-reverse} we already observed that the number of left
  leaves in the binary tree obtained from the rotation correspondence is equal to the
  number of leaves in the plane tree.

  As binary trees with $n$ inner nodes have $n+1$ leaves, and as there are $C_{n}$ binary
  trees with $n$ inner nodes, the total number of leaves in all binary trees with $n$
  inner nodes is $\binom{2n}{n}$. By symmetry, there have to be equally many left leaves
  as right leaves---which proves that there are $\frac{1}{2}\binom{2n}{n}$ left leaves,
  and thus $\tilde{N}_{n}'(1) = \frac{1}{2}\binom{2n}{n}$.
\end{remark}

In addition to the polynomials related to the Narayana numbers, there is another well-known sequence
of polynomials that will occur throughout this paper.

\begin{definition}\label{def:fib-poly}
  The \emph{Fibonacci polynomials} are recursively defined by
\begin{equation*}
  F_{r}(z)=F_{r-1}(z)+zF_{r-2}(z)
\end{equation*}
for $r\geq 2$ and $F_{0}(z)=0$, $F_{1}(z)=1$.
\end{definition}

For many identities involving Fibonacci numbers, there is an analogous statement for
Fibonacci polynomials. The identity presented in the following proposition will be used
repeatedly throughout this paper.
\begin{proposition}[d'Ocagne's Identity]
  Let $s, r \in \mathbb{Z}_{\geq 0}$ where $s \geq r$. Then we have
  \begin{equation}
    \label{eq:fibonacci:general}
    F_{r+1}(z) F_{s}(z) - F_{r}(z) F_{s+1}(z) = (-z)^{r} F_{s-r}(z).
  \end{equation}
\end{proposition}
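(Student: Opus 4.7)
The plan is to prove d'Ocagne's identity by encoding the Fibonacci-polynomial recurrence in a $2\times 2$ companion matrix. Set
\[
M(z) = \begin{pmatrix} 1 & z \\ 1 & 0 \end{pmatrix},
\]
so that the defining recursion in Definition~\ref{def:fib-poly} reads
\[
M(z) \begin{pmatrix} F_n(z) \\ F_{n-1}(z) \end{pmatrix} = \begin{pmatrix} F_{n+1}(z) \\ F_n(z) \end{pmatrix}.
\]
A short induction on $n$ (using $F_0 = 0$, $F_1 = 1$) then shows that the first column of $M(z)^n$ is $(F_{n+1}(z), F_n(z))^T$ for every $n \geq 0$. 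Combined with the observation $\det M(z) = -z$, multiplicativity of the determinant gives $\det M(z)^n = (-z)^n$.

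The key step is the following $2 \times 2$ determinant identity. Consider the matrix $A$ whose columns are the first columns of $M(z)^r$ and $M(z)^s$. On the one hand, direct expansion gives
\[
\det A = F_{r+1}(z) F_s(z) - F_r(z) F_{s+1}(z),
\]
which is the left-hand side of~\eqref{eq:fibonacci:general}. On the other hand, using $s \geq r$ I factor $M(z)^s = M(z)^r M(z)^{s-r}$, which lets me write $A = M(z)^r \cdot B$, where $B$ has columns $e_1$ and $(F_{s-r+1}(z), F_{s-r}(z))^T$. Since $B$ is upper triangular with diagonal entries $1$ and $F_{s-r}(z)$, multiplicativity of the determinant yields $\det A = (-z)^r F_{s-r}(z)$, matching the right-hand side of~\eqref{eq:fibonacci:general}.

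I do not foresee a real obstacle: the induction identifying the first column of $M(z)^n$ is routine, and the boundary case $s = r$ is consistent since then $A$ has two equal columns and $F_0(z) = 0$, so both sides vanish. An equivalent alternative route is a direct induction on $r$, with base $r = 0$ (where~\eqref{eq:fibonacci:general} reads $F_s = F_s$) and inductive step using both recurrences $F_{r+2}(z) = F_{r+1}(z) + z F_r(z)$ and $F_{s+1}(z) = F_s(z) + z F_{s-1}(z)$ to reduce to the hypothesis with $s$ replaced by $s-1$; this is essentially the same computation unrolled.
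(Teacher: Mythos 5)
Your proof is correct and is essentially the same argument as the paper's: both recognize the left-hand side as the determinant of a $2\times 2$ matrix built from consecutive Fibonacci polynomials, factor that matrix using the recurrence encoded in a companion-type matrix of determinant $-z$ (your $M(z)$ is the transpose of the paper's $\bigl(\begin{smallmatrix}1&1\\z&0\end{smallmatrix}\bigr)$), and conclude by multiplicativity of the determinant. The only difference is presentational---you work with powers $M(z)^n$ acting on $e_1$, while the paper iterates the row recurrence down to the base case---so nothing of substance separates the two.
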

\begin{proof}
  The left-hand side of~\eqref{eq:fibonacci:general} can be expressed as the determinant
  of $\begin{pmatrix} F_{r+1}(z) & F_{r}(z) \\ F_{s+1}(z) & F_{s}(z) \end{pmatrix}$. At
  the same time, for $r$, $s \geq 1$  we can write
  \[ \begin{pmatrix} F_{r+1}(z) & F_{r}(z) \\ F_{s+1}(z) & F_{s}(z) \end{pmatrix} =
    \begin{pmatrix} F_{r}(z) & F_{r-1}(z) \\ F_{s}(z) &
      F_{s-1}(z) \end{pmatrix} \begin{pmatrix} 1 & 1 \\ z & 0 \end{pmatrix}.\]
  Combining these two observations yields
  \[ F_{r+1}(z) F_{s}(z)- F_{r}(z) F_{s+1}(z) = \det\begin{pmatrix} F_{r+1}(z) & F_{r}(z)
      \\ F_{s+1}(z) & F_{s}(z) \end{pmatrix} = \det\begin{pmatrix} 1 & 0 \\ F_{s+1-r}(z)
      & F_{s-r}(z) \end{pmatrix} \det\begin{pmatrix} 1 & 1 \\ z & 0 \end{pmatrix}^{r},  \]
  which proves the statement.
\end{proof}

Observe that setting $s = r+1$ in~\eqref{eq:fibonacci:general} yields the identity
\begin{equation}
  \label{eq:fibonacci}
  F_{r+1}(z)^{2} - F_{r}(z) F_{r+2}(z) = (-z)^{r},
\end{equation}
which we will make heavy use of later on.

An important tool in the context of plane trees is the substitution $z = u/(1+u)^{2}$,
which allows us to write some expressions in a manageable form. It
is easy to check that with this
substitution, we can write Fibonacci polynomials as
\begin{equation}\label{eq:fib-poly-u}
  F_{r}(-z)=\frac{1-u^{r}}{(1-u)(1+u)^{r-1}}.
\end{equation}

The fact that this substitution also works for Fibonacci polynomials is not that
surprising, as $zF_{r}(-z)/F_{r+1}(-z)$ is the generating function of
plane trees with height $\leq r$ (see~\cite{Bruijn-Knuth-Rice:1972}).

\subsection{Leaf-Reduction and the Expansion Operator}\label{sec:cut-leaves:expansion}

The reduction $\rho\colon \mathcal{T}\setminus\{\innernode\} \to \mathcal{T}$ we want to
investigate now can be explained very easily. For any tree $\tau\in
\mathcal{T}\setminus\{\innernode\}$ we obtain the reduced tree $\rho(\tau)$ simply by
removing all leaves from $\tau$. Repeated application of $\rho$ to a tree is illustrated
in Figure~\ref{fig:cut-leaves:illustration}.

\begin{figure}[ht]
  \centering
      \[
    \begin{tikzpicture}[thick, scale=0.75, baseline={([yshift=-0.5em]current bounding
      box.center)}]
      \node[draw, circle] {}
      child {node[draw, circle] {} child {node[draw, circle] {}
          child[gray, dashed] {node[draw,
            rectangle] {}}}}
      child {node[draw, circle] {}
        child[gray, dashed] {node[draw, rectangle] {}}
        child[gray, dashed] {node[draw, rectangle] {}}
      }
      child {node[draw, circle] {} child[gray, dashed] {node[draw, rectangle] {}}};
    \end{tikzpicture}
    \quad \mapsto \quad
    \begin{tikzpicture}[thick, scale=0.75, baseline={([yshift=-0.5em]current bounding
      box.center)}]
      \node[draw, circle] {}
      child {node[draw, circle] {} child[gray, dashed] {node[draw, rectangle] {}}}
      child[gray, dashed] {node[draw, rectangle] {}}
      child[gray, dashed] {node[draw, rectangle] {}};
    \end{tikzpicture}
    \quad \mapsto \quad
    \begin{tikzpicture}[thick, scale=0.75, baseline={([yshift=-0.5em]current bounding
      box.center)}]
      \node[draw, circle] {}
      child[gray, dashed] {node[draw, rectangle] {}};
    \end{tikzpicture}\quad \mapsto \quad
    \begin{tikzpicture}[thick, scale=0.75, baseline={([yshift=-0.5em]current bounding
      box.center)}]
      \node[draw, rectangle] {};
    \end{tikzpicture}
    \]
  \caption{Illustration of the ``cutting leaves''-operator $\rho$}
  \label{fig:cut-leaves:illustration}
\end{figure}
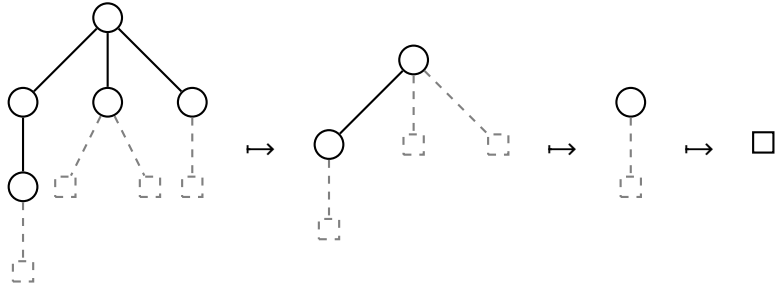

It is easy to see that this operator is certainly not injective: there are many trees that
reduce to the same tree. However, it is also easy to see that $\rho$ is surjective, as we
can always construct an expanded tree that reduces to any given tree $\tau$ by attaching
leaves to all leaves of $\tau$.

In fact, the operator $\rho^{-1}$ mapping trees $\tau\in \mathcal{T}$ to the set of
preimages is easier to handle from a combinatorial point of view. This is because we can model
the expansion of trees in the language of generating functions.

\begin{proposition}\label{prop:cut-leaves:expansion}
  Let $\mathcal{F} \subseteq \mathcal{T}$ be a family of plane trees with bivariate
  generating function $f(z,t)$, where $z$ marks inner nodes and $t$ marks leaves. Then the
  generating function of $\rho^{-1}(\mathcal{F})$,
  the family of trees whose reduction is in $\mathcal{F}$, is given by
  \begin{equation}\label{eq:cut-leaves-op}
    \Phi(f(z,t)) = (1-t) f\Big(\frac{z}{(1-t)^{2}}, \frac{zt}{(1-t)^{2}}\Big).
  \end{equation}
\end{proposition}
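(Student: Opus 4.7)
The plan is to prove the statement combinatorially by describing the structure of a preimage $\sigma\in\rho^{-1}(\tau)$ relative to $\tau=\rho(\sigma)$, reading off the corresponding generating function contribution, and summing over $\tau\in\mathcal{F}$.

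First, I would spell out the decomposition. Fix $\tau\in\mathcal{F}$ with $n(\tau)$ inner nodes and $m(\tau)$ leaves, and consider how $\sigma\in\rho^{-1}(\tau)$ must look. Every leaf $w$ of $\tau$ necessarily arose from an inner node of $\sigma$ all of whose children are leaves of $\sigma$; there must be at least one such child, since otherwise $w$ would itself be a leaf of $\sigma$ and would have been removed. Every inner node $v$ of $\tau$ with $k_v$ children (in $\tau$) must be an inner node of $\sigma$ that retains precisely those $k_v$ subtrees in the same left-to-right order, with arbitrary finite sequences of new leaves inserted into each of the $k_v+1$ gaps around them. Conversely, any such insertion of leaves produces a valid preimage, so the decomposition is a bijection.

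Next, I would read off the weight of a preimage. Under the bijection, each leaf $w$ of $\tau$ contributes the factor $z\cdot\frac{t}{1-t}$ (the $z$ because $w$ becomes an inner node in $\sigma$, and $\frac{t}{1-t}$ for the non-empty sequence of leaf children), while each inner node $v$ of $\tau$ with $k_v$ children contributes $z\cdot(1-t)^{-(k_v+1)}$ (the $z$ for itself, and one factor $(1-t)^{-1}$ for each of the $k_v+1$ gaps). Multiplying across all nodes of $\tau$ and collecting the exponents of $(1-t)$ requires summing $k_v+1$ over inner nodes, which equals $\sum_{v} k_v + n(\tau) = (n(\tau)+m(\tau)-1) + n(\tau) = 2n(\tau)+m(\tau)-1$ by the elementary identity that a tree has one fewer edge than nodes. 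Consolidating everything, the total generating function contribution of the preimages of a fixed $\tau$ equals
\[
(1-t)\left(\frac{z}{(1-t)^2}\right)^{n(\tau)}\left(\frac{zt}{(1-t)^2}\right)^{m(\tau)}.
\]

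Finally, summing this expression over $\tau\in\mathcal{F}$ factors the prefactor $(1-t)$ out and identifies the remaining sum as $f$ evaluated at the substituted arguments, yielding~\eqref{eq:cut-leaves-op}. The only real obstacle is the bookkeeping of the $(1-t)$ exponents from the gap-insertions at inner nodes, but the ``edges equals nodes minus one'' identity applied to $\tau$ makes this clean. As a sanity check, the singular case $\tau=\square$ (so $n(\tau)=0$, $m(\tau)=1$) gives the contribution $\frac{zt}{1-t}$, which correctly enumerates the trees whose root is an inner node with a non-empty sequence of leaf children, i.e.\ precisely those $\sigma$ with $\rho(\sigma)=\square$.
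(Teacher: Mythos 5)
Your proof is correct and follows essentially the same route as the paper's: expand each leaf of $\tau$ by a nonempty sequence of new leaves (giving $t\mapsto zt/(1-t)$) and insert possibly empty sequences of leaves into the $k_v+1$ gaps at each inner node, with the exponent bookkeeping $\sum_v(k_v+1)=2n+m-1$ matching the paper's count of available positions. Your extra care in verifying the bijection and the sanity check for $\tau=\square$ are welcome but do not change the argument.
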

\begin{proof}
  It is obvious from a combinatorial point of view that the operator $\Phi$ has to be
  linear. Thus we only have to determine how a tree represented by an arbitrary monomial
  $z^{n}t^{k}$, i.e.\ a tree $\tau$ with $n$ inner nodes and $k$ leaves, is expanded.

  In order to obtain all possible tree expansions from $\tau$, we perform the following
  operations: first, all leaves of $\tau$ are expanded by appending a nonempty sequence of leaves to
  each of them. Then, every inner node of $\tau$ is expanded by appending (possibly empty)
  sequences of leaves between two of its children as well as before the first and after
  the last one.

  In terms of generating functions, expanding the leaves of $\tau$ corresponds to
  replacing $t$ by $zt/(1-t)$. Expanding the inner vertices is a bit more involved: by considering
  that every inner node has precisely one more available position to attach new leaves
  than it has children we find that there are $2n+k-1$ available positions overall within
  $\tau$. Therefore we find
  \[ \Phi(z^{n}t^{k}) = z^{n} \Big(\frac{zt}{1-t}\Big)^{k} \frac{1}{(1-t)^{2n+k-1}},  \]
  which, as $\Phi$ is linear, immediately proves~\eqref{eq:cut-leaves-op}.
\end{proof}

\begin{corollary}
  The generating function for plane trees $T(z,t)$ satisfies the functional
  equation
  \begin{equation}\label{eq:cut-leaves:functional}
    T(z,t) = t + \Phi(T(z,t)).
  \end{equation}
\end{corollary}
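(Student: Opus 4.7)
The plan is to identify both sides of the claimed equation with the generating functions of the same combinatorial class, namely $\mathcal{T}$. The right-hand side naturally decomposes this class according to whether or not a given tree is the single-leaf tree $\square$.

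First I would observe that by Proposition~\ref{prop:cut-leaves:expansion} applied to $\mathcal{F} = \mathcal{T}$, the expression $\Phi(T(z,t))$ enumerates the class $\rho^{-1}(\mathcal{T})$ of all plane trees whose image under $\rho$ lies in $\mathcal{T}$. Since $\rho$ is defined on $\mathcal{T} \setminus \{\square\}$ and always produces a tree in $\mathcal{T}$, we have the set-theoretic identity $\rho^{-1}(\mathcal{T}) = \mathcal{T} \setminus \{\square\}$. In terms of generating functions, this means that $\Phi(T(z,t)) = T(z,t) - t$, because $t$ is precisely the monomial contributed by the single-leaf tree.

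Rearranging yields $T(z,t) = t + \Phi(T(z,t))$, which is the desired functional equation~\eqref{eq:cut-leaves:functional}. The only thing that requires a moment of thought is checking that every tree with at least two nodes actually does arise as an expansion of some smaller tree; but this is immediate from the fact that $\rho$ sends such a tree to a valid plane tree (as removing all leaves from a tree with an inner node produces a nonempty tree), so the partition $\mathcal{T} = \{\square\} \sqcup (\mathcal{T}\setminus\{\square\})$ corresponds exactly to the partition on the right-hand side.

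There is no real obstacle here: the content of the corollary is simply the translation of the evident combinatorial decomposition ``every plane tree is either a single leaf or the pre-image under $\rho$ of a plane tree'' into the language of generating functions, using Proposition~\ref{prop:cut-leaves:expansion} to handle the pre-image class. As a sanity check, one could alternatively verify the identity algebraically by substituting~\eqref{eq:rootedplane:ogf} into both sides of~\eqref{eq:cut-leaves:functional} and confirming the resulting equation, but the combinatorial argument above is cleaner and requires no computation.
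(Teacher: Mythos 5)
Your argument is correct and is essentially identical to the paper's proof: both identify $\Phi(T(z,t))$ as the generating function of $\rho^{-1}(\mathcal{T}) = \mathcal{T}\setminus\{\square\}$ via Proposition~\ref{prop:cut-leaves:expansion}, and observe that the excluded single-node tree contributes the monomial $t$. The paper states this in one line; you have simply spelled out the same reasoning in more detail.
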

\begin{proof}
  This follows directly from the fact that $\rho\colon \mathcal{T}\setminus \{\innernode\}
  \to \mathcal{T}$ is surjective, i.e. $\rho^{-1}(\mathcal{T}) =
  \mathcal{T}\setminus\{\innernode\}$.
\end{proof}
\begin{corollary}
  The Narayana numbers satisfy the identity
  \begin{equation*}
    N_{n+k-1,k}=\sum_{\ell=1}^{k}\binom{2n+k-\ell-2}{k-\ell}N_{n-1,\ell}
  \end{equation*}
  for $n\geq 2$, $k\geq1$.
\end{corollary}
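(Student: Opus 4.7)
The plan is to read off the identity by extracting the coefficient of $z^{n}t^{k}$ from the functional equation $T(z,t)=t+\Phi(T(z,t))$ given in~\eqref{eq:cut-leaves:functional}. The first step is to convert Narayana numbers into coefficients of $T$. Writing $T(z,t)=\sum_{m,\ell}T_{m,\ell}z^{m}t^{\ell}$, where $T_{m,\ell}$ counts plane trees with $m$ inner nodes and $\ell$ leaves, the combinatorial interpretation of $N_{n,k}$ in Definition~\ref{def:narayana-poly} gives $N_{n+k-1,k}=T_{n,k}$ (a tree with $n+k$ nodes, $n$ inner and $k$ leaves, has $n+k-1$ edges) and $N_{n-1,\ell}=T_{n-\ell,\ell}$ whenever $n-1\ge1$.

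Next I would expand $\Phi(T(z,t))$ using~\eqref{eq:cut-leaves-op}:
\[
\Phi(T(z,t))=(1-t)\sum_{m,\ell\ge0}T_{m,\ell}\frac{z^{m+\ell}t^{\ell}}{(1-t)^{2m+2\ell}}=\sum_{m,\ell\ge0}T_{m,\ell}\,t^{\ell}z^{m+\ell}(1-t)^{1-2(m+\ell)}.
\]
Notice the pleasant fact that the exponent of $1-t$ depends only on $m+\ell$. Extracting $[z^{n}]$ (summing over $m+\ell=n$, i.e.\ $m=n-\ell$) yields
\[
[z^{n}]\Phi(T(z,t))=(1-t)^{1-2n}\sum_{\ell\ge0}T_{n-\ell,\ell}\,t^{\ell}.
\]
For $n\ge1$ we have $1-2n<0$, so the generalized binomial theorem gives $(1-t)^{1-2n}=\sum_{j\ge0}\binom{2n-2+j}{j}t^{j}$, and extracting $[t^{k}]$ produces
\[
[z^{n}t^{k}]\Phi(T(z,t))=\sum_{\ell=0}^{k}\binom{2n+k-\ell-2}{k-\ell}T_{n-\ell,\ell}.
\]

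Since $n\ge2$, the functional equation~\eqref{eq:cut-leaves:functional} gives $[z^{n}t^{k}]T(z,t)=[z^{n}t^{k}]\Phi(T(z,t))$, so
\[
T_{n,k}=\sum_{\ell=0}^{k}\binom{2n+k-\ell-2}{k-\ell}T_{n-\ell,\ell}.
\]
To finish, I would verify that the boundary contributions vanish: the $\ell=0$ summand contains $T_{n,0}=0$ (any tree with at least one inner node has at least one leaf), and for $\ell\ge n$ one has $T_{n-\ell,\ell}=0$ whenever $n\ge2$, matching the fact that $N_{n-1,\ell}=0$ for $\ell\ge n$. Translating $T_{n,k}\mapsto N_{n+k-1,k}$ and $T_{n-\ell,\ell}\mapsto N_{n-1,\ell}$ then yields the claimed identity.

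The computation is entirely mechanical; the only mild obstacle is bookkeeping the index shifts between "$n$ edges" (Narayana convention) and "$n$ inner nodes" (generating-function convention), and checking that the natural sum over $\ell\ge0$ coming from coefficient extraction agrees with the asserted sum $\sum_{\ell=1}^{k}$.
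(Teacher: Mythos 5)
Your proposal is correct and is precisely the paper's argument: the authors also obtain the identity by extracting the coefficient of $z^{n}t^{k}$ from both sides of the functional equation~\eqref{eq:cut-leaves:functional}, and your computation just spells out the details (the monomial expansion of $\Phi$, the geometric/binomial expansion of $(1-t)^{1-2n}$, and the index translation between edges and inner nodes) that the paper leaves implicit. The boundary checks at $\ell=0$ and $\ell\ge n$, and the role of the hypothesis $n\ge 2$ in reconciling $T_{0,1}=1$ with $N_{0,1}=0$, are handled correctly.
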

\begin{proof}
  The result follows from extracting the coefficient of $z^{n}t^{k}$ from both sides of~\eqref{eq:cut-leaves:functional}.
\end{proof}
\begin{remark}
  Note that in~\cite{Callan:2012:kreweras-narayana-identity} there is a very short proof
  based on Dyck paths for this identity, and actually the
  argumentation there is strongly related to our tree reduction here: by the well-known
  glove bijection, it is easy to see that cutting away all leaves of a plane tree
  translates into removing all peaks within the corresponding Dyck path.
\end{remark}

We are now interested in determining a multivariate generating function enumerating
plane trees with respect to the tree size as well as the size of the tree after applying
the tree reduction $\rho$ a fixed number of times.

\begin{proposition}\label{prop:cut-leaves:gf}
  Let $r\in \N_{0}$. The trivariate generating function $G_{r}(z,v_{I},v_{L}) = G_{r}^{\operatorname{L}}(z,v_{I},v_{L})$ enumerating
  plane trees whose leaves can be cut at least $r$-times, where $z$ marks the tree size,
  and $v_{I}$ and $v_{L}$ mark the number of inner nodes and leaves of the $r$-fold cut
  tree, respectively, is given by
  \begin{equation}\label{eq:cut-leaves:iterated-gf}
    G_{r}(z,v_{I},v_{L}) = \Phi^{r}(T(zv_{I},tv_{L}))|_{t=z} = \frac{1 - u^{r+2}}{(1-u^{r+1})(1+u)} T\Big(\frac{u
      (1-u^{r+1})^{2}}{(1-u^{r+2})^{2}} v_{I}, \frac{u^{r+1}(1-u)^{2}}{(1-u^{r+2})^{2}} v_{L}\Big).
  \end{equation}
\end{proposition}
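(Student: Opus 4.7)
The proposition asserts two equalities, which I would prove in turn. The first, $G_{r} = \Phi^{r}(T(zv_I, tv_L))|_{t=z}$, follows immediately from the combinatorial interpretation of $\Phi$ given by Proposition~\ref{prop:cut-leaves:expansion}. Viewing $v_I$ and $v_L$ as passive weights, $T(zv_I, tv_L)$ enumerates plane trees in which each inner node contributes $zv_I$ and each leaf contributes $tv_L$. Every application of $\Phi$ performs a single inverse-reduction step: the passive weights stay attached to the original (post-reduction) nodes, while the active variables $z, t$ are redirected to count inner nodes and leaves of the expanded tree. After $r$ iterations, $v_I$ and $v_L$ therefore mark the inner nodes and leaves of $\rho^{r}(\tau)$; substituting $t=z$ merges the two active variables into a single size-tracker, which by definition is $G_{r}$.

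For the closed-form expression, I would use a direct combinatorial decomposition of a $\rho^{r}$-admissible tree $\tau$ relative to its reduction $\sigma := \rho^{r}(\tau)$. A node $v$ of $\tau$ lies in $\sigma$ iff the subtree rooted at $v$ has (edge-)height at least $r$; it is a leaf of $\sigma$ precisely when that height equals $r$, and an inner node of $\sigma$ when it is strictly greater. Accordingly, the subtree at a sigma-leaf is an arbitrary plane tree of height exactly $r$, while at a sigma-inner node with $d' \ge 1$ sigma-children one inserts the sigma-children (each counted recursively by $G_{r}$) interleaved with $d'+1$ possibly-empty sequences of \emph{fringe} subtrees of height at most $r-1$. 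Let $A_{r}(z) := z F_{r}(-z)/F_{r+1}(-z)$ be the generating function of plane trees of height at most $r-1$ (recalled after Definition~\ref{def:fib-poly}), so that the Fibonacci recurrence yields $\alpha := 1 - A_{r}(z) = F_{r+2}(-z)/F_{r+1}(-z)$, and let $L_{r}(z) := A_{r+1}(z) - A_{r}(z)$, which identity~\eqref{eq:fibonacci} collapses to $z^{r+1}/(F_{r+1}(-z) F_{r+2}(-z))$.

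The decomposition then yields the functional equation
\begin{equation*}
  G_{r} = v_{L} L_{r} + \frac{z v_{I}\, G_{r}}{\alpha (\alpha - G_{r})},
\end{equation*}
which rearranges into the quadratic $G_{r}^{2} - G_{r}\bigl(\alpha + v_{L} L_{r} - z v_{I}/\alpha\bigr) + \alpha v_{L} L_{r} = 0$. Comparing term by term with the quadratic $T^{2} - T(1+t-z) + t = 0$ satisfied by $T(z,t)$ (Proposition~\ref{proposition:gf-narayana}) and selecting the branch vanishing at $z=0$, one identifies
\begin{equation*}
  G_{r} = \alpha\, T\Bigl(\frac{z v_{I}}{\alpha^{2}},\ \frac{v_{L} L_{r}}{\alpha}\Bigr).
\end{equation*}
Substituting the Fibonacci expressions for $\alpha$ and $L_{r}$ and then passing to the $u$-variable via $z = u/(1+u)^{2}$ and \eqref{eq:fib-poly-u} reproduces the right-hand side of~\eqref{eq:cut-leaves:iterated-gf}. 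The main obstacle is the bookkeeping of the decomposition---making sure every $\rho^{r}$-admissible tree is counted exactly once, that the two root cases (sigma-leaf versus sigma-inner) interact correctly with the recursion, and that the correct branch of the quadratic is chosen; the ensuing algebraic simplifications are direct applications of the Fibonacci identities collected in Section~\ref{sec:cut-leaves:preliminaries}.
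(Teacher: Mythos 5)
Your proposal is correct, but for the closed form it takes a genuinely different route from the paper. The paper proceeds bottom-up through the operator: it introduces the multiplicative companion $\Psi(f)=f\bigl(z/(1-t)^2,\,zt/(1-t)^2\bigr)$, proves by induction the Fibonacci-polynomial formulas $\Psi^r(t)|_{t=z}=z^{r+1}/F_{r+2}(-z)^2$, $\Psi^r(z)|_{t=z}=zF_{r+1}(-z)^2/F_{r+2}(-z)^2$ and $\prod_{j=0}^{r-1}(1-\Psi^j(t)|_{t=z})=F_{r+2}(-z)/F_{r+1}(-z)$, and thereby obtains $\Phi^r(z^nt^k)|_{t=z}$ explicitly before summing by linearity. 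You instead decompose a $\rho^r$-admissible tree top-down relative to its reduction, using the characterization of surviving nodes by subtree height (the same observation the paper invokes in the proof of Lemma~\ref{lem:cut-leaves:toll-decreasing}, namely that one reduction lowers the height by exactly one), and identify the resulting quadratic with that of $T$. I checked the key steps: $\alpha=F_{r+2}(-z)/F_{r+1}(-z)$ and $L_r=z^{r+1}/\bigl(F_{r+1}(-z)F_{r+2}(-z)\bigr)$ do transform under \eqref{eq:fib-poly-u} into the prefactor and arguments of \eqref{eq:cut-leaves:iterated-gf}, and the cases $r=0,1$ agree with the expansions listed after the proposition. Your route is arguably more illuminating, since it explains structurally why the answer is a dilated substitution instance of $T$ (the skeleton $\rho^r(\tau)$ dressed with fringe of height less than $r$), whereas the paper's operator iteration is more mechanical but transfers essentially verbatim to the path and old-leaf/old-path reductions treated later; both ultimately rest on the identity \eqref{eq:fibonacci}. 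In a full write-up you should make explicit the bijectivity of the decomposition (each child subtree of a surviving node is classified by whether its height is at least $r$, and the root case splits according to whether the height is exactly $r$ or larger) and the uniqueness of the power-series solution of the functional equation together with the branch choice $G_r(0,\cdot,\cdot)=0$; both are routine.
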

\begin{proof}
  First, observe that formally, we can obtain the generating function enumerating
  plane trees that can be reduced at least $r$-times with respect to their size by
  considering $\Phi^{r}(T(z,t))|_{t=z}$. If we additionally track some size parameter like
  the number of inner nodes or the number of leaves
  before the expansion by marking their size with $v_{I}$ and $v_{L}$, then we obtain a generating function
  for plane trees that can be reduced at least $r$-times where $v_{I}$ and $v_{L}$ mark
  inner nodes and leaves in the original tree and $z$ marks the size of the expanded
  tree. From a different point of view,
  $z$ marks the size of the original tree and $v_{I}$ and $v_{L}$ mark the number of
  inner nodes and leaves of the $r$-fold
  reduced tree, meaning that we have
  \[ G_{r}(z,v_{I},v_{L}) = \Phi^{r}(T(zv_{I},tv_{L}))|_{t=z}, \]
  which proves the first equation in~\eqref{eq:cut-leaves:iterated-gf}.

  As $\Phi$ is linear, we are mainly interested in finding a representation for
  $\Phi^{r}(z^{n}t^{k})|_{t=z}$. To do so, we consider the strongly related operator
  \[ \Psi(f(z,t)) \coloneqq f\Big(\frac{z}{(1-t)^{2}}, \frac{zt}{(1-t)^{2}}\Big). \]
  It is easy to prove by induction that iterative application of $\Phi$ can be expressed
  in terms of $\Psi$ via
  \[ \Phi^{r}(f(z,t)) = \Psi^{r}(f(z,t)) \prod_{j=0}^{r-1} (1 - \Psi^{j}(t)),  \]
  which means that we can concentrate on the investigation of the linear operator
  $\Psi$. Note that $\Psi$ is also multiplicative, meaning that $\Psi^{r}(z^{n}t^{k}) =
  \Psi^{r}(z)^{n} \Psi^{r}(t)^{k}$.

  Again by induction, it is easy to show that the recurrences
  \[ \Psi^{r+1}(t) = \frac{z \Psi^{r}(t)}{\prod_{j=0}^{r} (1 - \Psi^{j}(t))^{2}}
    \quad\text{and}\quad
    \Psi^{r+1}(z) = \frac{z}{\prod_{j=0}^{r} (1 - \Psi^{j}(t))^{2}} \]
  hold for $r\geq 0$. Now define $f_{r} \coloneqq \Psi^{r}(t)|_{t=z}$ and $g_{r}\coloneqq
  \Psi^{r}(z)|_{t=z}$. We prove by induction that these quantities can be
  represented by means of Fibonacci polynomials as
  \[ f_{r} = \frac{z^{r+1}}{F_{r+2}(-z)^{2}} \quad\text{and}\quad g_{r} = \frac{z
      F_{r+1}(-z)^{2}}{F_{r+2}(-z)^{2}}  \]
  for $r\geq 0$, where the recurrence relations from above, the
  identity~\eqref{eq:fibonacci} as well as the relation
  \[ \prod_{j=0}^{r-1} (1-f_{j}) = \frac{F_{r+2}(-z)}{F_{r+1}(-z)}  \]
  for $r\geq 0$ play integral parts in the proof.

  With these explicit representations, we find
  \begin{equation}\label{eq:cut-leaves:iteratedPhi-fibo}
    \Phi^{r}(z^{n}t^{k})|_{t=z} = \Psi^{r}(z^{n}t^{k})|_{t=z} \prod_{j=0}^{r-1}(1-f_{j})
    = \frac{z^{n+k(r+1)} F_{r+1}(-z)^{2n-1}}{F_{r+2}(-z)^{2n+2k-1}}.
  \end{equation}
  Then, using~\eqref{eq:fib-poly-u} and rewriting the right-hand side
  of~\eqref{eq:cut-leaves:iteratedPhi-fibo} in terms of $u$, where $z = u/(1+u)^{2}$,
  yields
  \[ \Phi^{r}(z^{n}t^{k})|_{t=z} = \frac{1 - u^{r+2}}{(1-u^{r+1})(1+u)} \Big(\frac{u
      (1-u^{r+1})^{2}}{(1 - u^{r+2})^{2}}\Big)^{n} \Big(\frac{u^{r+1}
      (1-u)^{2}}{(1-u^{r+2})^{2}}\Big)^{k}.  \]
  By linearity, we are allowed to apply $\Phi^{r}$ to every summand in the power
  series expansion of $f(z,t)$ separately---which proves the statement.
\end{proof}

The generating function $G_{r}(z,v,v)$ tells us how many nodes (marked by $v$) are
still in the tree after $r$ reductions. For the sake of brevity we set $G_{r}(z,v) \coloneqq
G_{r}(z,v,v)$. It is completely described in
terms of the function $T(z,t)$, although in a non-trivial way. Results
about moments and the limiting distribution can be extracted from this explicit form.

With the help of the mathematics software system SageMath~\cite{SageMath:2016:7.4}, the
generating function $G_{r}(z,v)$ can be expanded. For small values of $r$, the first few
summands are
\[ G_{1}(z,v) = v z^{2} + (v^{2} + v) z^{3} + (v^{3} + 3v^{2} + v) z^{4} + (v^{4} + 6v^{3}
  + 6v^{2} + v) z^{5} + O(vz^{6}), \]
\[ G_{2}(z,v) = vz^3 + (v^2 + 3v)z^4 + (v^3 + 5v^2 + 7v)z^5 + (v^4 + 7v^3 + 18v^2 +
  15v)z^6 + O(vz^{7}),  \]
\[ G_{3}(z,v) = vz^4 + (v^2 + 5v)z^5 + (v^3 + 7v^2 + 18v)z^6 + (v^4 + 9v^3 + 33v^2 +
  57v)z^7 + O(vz^{8}).  \]

As announced in the introduction, we investigate the behavior of the random
variable $X_{n,r} = X_{n,r}^{\operatorname{L}}$ that models the number of nodes which are left
after reducing a random tree $\tau$ with $n$ nodes $r$-times. In case the $r$-fold
application of $\rho$ to $\tau$ is not defined, we consider the resulting tree size to be
$0$, i.e., the random variable $X_{n,r} = 0$ for these trees.  Note that the tree $\tau$ is
chosen uniformly at random among all trees of size $n$. With the help of the generating
function $G_{r}(z,v)$ we are able to express the probability generating function of $X_{n,r}$ as
\begin{equation}\label{eq:pgf-cut-leaves}
  \E v^{X_{n,r}}=\frac{a_{n,r}+[z^{n}]G_{r}(z,v)}{C_{n-1}}
\end{equation}
where $a_{n,r}$ is the number of trees of size $n$ which are empty
after reducing $r$-times. We have $a_{n,r}=C_{n-1}-[z^{n}]G_{r}(z,1)$.

In addition to $X_{n,r}$, we also consider the random variables $I_{n,r}$ and $L_{n,r}$
that model the number of inner nodes and leaves, respectively, that remain after reducing
a random tree with $n$ nodes $r$ times. The generating functions corresponding to $I_{n,r}$
and $L_{n,r}$ are $G_{r}(z,v,1)$ and $G_{r}(z,1,v)$, respectively.

The relations $X_{n,r}\overset{d}{=}I_{n,r} + L_{n,r}$ and $I_{n,r} \overset{d}{=}
X_{n,r+1}$ hold by the combinatorial interpretation of the operator $\Phi$.

\subsection{Asymptotic Analysis}\label{sec:cut-leaves:analysis}
We find explicit generating functions for the factorial moments of the random variables
$X_{n,r}$, $I_{n,r}$, and $L_{n,r}$.

\begin{proposition}\label{prop:cut-leaves:moments}
  The $d$th factorial moments of $X_{n,r}$, $I_{n,r}$ and $L_{n,r}$ are given by
  \begin{equation}\label{eq:moments}
\E X_{n,r}^{\underline{d}}=\E I_{n,r-1}^{\underline{d}}=\frac{1}{C_{n-1}}[z^{n}]\frac{\partial^d }{\partial v^d}G_r(z,1)\Big|_{v=1}
=\frac{1}{C_{n-1}}[z^{n}]\frac{u^{d}d!}{(1+u)(1-u^{r+1})^d(1-u)^{d-1}}\tilde{N}_{d-1}(u^{r})
\end{equation}
and
\begin{equation}
  \label{eq:moments-fringe}
  \E L_{n,r}^{\underline{d}} = \frac{1}{C_{n-1}} [z^{n}] \frac{u^{dr+2d} (1-u)
    d!}{(1+u)(1-u^{r+2})^{d}(1-u^{r+1})^{d}} \tilde{N}_{d-1}\Bigl(\frac1u\Bigr)
\end{equation}
where $z=u/(1+u)^{2}$ for $d\in\Z_{\geq 1}$.
\end{proposition}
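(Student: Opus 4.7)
Differentiating the probability generating function~\eqref{eq:pgf-cut-leaves} $d$ times with respect to $v$ kills the $v$-independent constant $a_{n,r}$ and yields $\E X_{n,r}^{\underline d} = C_{n-1}^{-1}[z^n]\partial_v^d G_r(z,v)|_{v=1}$. The first equality in~\eqref{eq:moments} is simply a restatement of the distributional identity $I_{n,r-1}\overset{d}{=}X_{n,r}$ already recorded in the excerpt. Proposition~\ref{prop:cut-leaves:gf} with $v_I = v_L = v$ gives $G_r(z,v) = P(u)\,T(Av,Bv)$, where $P(u) := (1-u^{r+2})/[(1+u)(1-u^{r+1})]$ and $A$, $B$ are the rational functions of $u$ visible in~\eqref{eq:cut-leaves:iterated-gf}; the entire $v$-dependence sits in $T(Av,Bv)$.

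From the functional equation in the proof of Proposition~\ref{proposition:gf-narayana}, $T(Av,Bv)$ satisfies the quadratic relation
\[ T(Av,Bv)^2 - T(Av,Bv)\bigl(1 + (B-A)v\bigr) + Bv = 0. \]
A direct computation of the discriminant in~\eqref{eq:rootedplane:ogf} at $(z,t) = (A,B)$ (one finds $A-B = u(1-u^r)/(1-u^{r+2})$ and the discriminant factors as $(1-u)^2(1-u^{r+1})^2/(1-u^{r+2})^2$) yields $T(A,B) = u^{r+1}(1-u)/(1-u^{r+2})$. Writing $\tau_d$ for the $d$th Taylor coefficient of $T(Av,Bv)$ at $v=1$ and setting $D := 2T(A,B) - (1+B-A) = -(1-u)(1-u^{r+1})/(1-u^{r+2})$, Taylor expansion of the quadratic relation at $v = 1+w$ produces the recursion
\[ D\,\tau_d = \tau_{d-1}(B-A) - \sum_{k=1}^{d-1}\tau_k\tau_{d-k}\qquad (d\ge 2), \]
while the $d=1$ case gives $\tau_1 = u^{r+1}/(1-u^{r+2})$ directly.

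I would then prove by induction on $d\ge 1$ that
\[ \tau_d = \frac{u^d\,\tilde N_{d-1}(u^r)}{(1-u^{r+2})(1-u^{r+1})^{d-1}(1-u)^{d-1}}. \]
Plugging this ansatz into the recursion and clearing the common denominators reduces the induction step to the polynomial identity
\[ \tilde N_{d-1}(t) = (1-t)\,\tilde N_{d-2}(t) + \sum_{k=1}^{d-1}\tilde N_{k-1}(t)\,\tilde N_{d-k-1}(t)\qquad(d\ge 2). \]
This identity is precisely the coefficient of $z^d$ in $T(z,tz) = tz + (1-t)\,z\,T(z,tz) + T(z,tz)^2$, which in turn is an immediate rearrangement of the functional equation $T = t + zT/(1-T)$ after the substitution $t \mapsto tz$. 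Multiplying $d!\,\tau_d$ by $P(u)$ then recovers~\eqref{eq:moments}. The main obstacle is isolating and verifying the Narayana identity above; everything else is bookkeeping in the $u$-parametrization.

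For~\eqref{eq:moments-fringe} the same skeleton applies with $G_r(z,1,v) = P(u)\,T(A,Bv)$, so that $v$ enters only through the second argument of $T$. Differentiating the quadratic relation $T(A,Bv)^2 - T(A,Bv)(1 + Bv - A) + Bv = 0$ iteratively with respect to $v$ yields an analogous recursion whose natural ansatz involves $\tilde N_{d-1}(1/u)$; the appearance of the reciprocal argument is explained by Proposition~\ref{prop:narayana-reverse}, which shows that expansions of $T$ in the second variable correspond to the reversal $t\mapsto 1/t$ in the Narayana polynomials.
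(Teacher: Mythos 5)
Your argument is correct, and it reaches the paper's formulas by a genuinely different mechanism. The paper exploits an affine self-similarity of $T$: completing the square under the radical shows $1-2(a+b)+(a-b)^2=\Delta^2$ with $\Delta=(1-u)(1-u^{r+1})/(1-u^{r+2})$, whence $T(a(1+q),b(1+q))=\mathrm{const}+\Delta\, T(\alpha q,\beta q)$ for explicit $\alpha,\beta$ with $\beta/\alpha=u^r$; all Taylor coefficients then drop out in one stroke from the Narayana expansion~\eqref{eq:gf-narayana:quotient}, and~\eqref{eq:moments-fringe} follows from the analogous identity for $T(a,b(1+q))$ with $\beta'/\alpha'=1/u$. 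You instead derive the coefficient recursion $D\,\tau_d=\tau_{d-1}(B-A)-\sum_{k=1}^{d-1}\tau_k\tau_{d-k}$ from the quadratic functional equation (your $D$ is exactly $-\Delta$, and your discriminant factorization is the same identity the paper uses), and verify the closed form by induction via the convolution identity $\tilde N_{d-1}(t)=(1-t)\tilde N_{d-2}(t)+\sum_{k=1}^{d-1}\tilde N_{k-1}(t)\tilde N_{d-k-1}(t)$, which is indeed just the coefficient-wise form of $T=tz+(1-t)zT+T^2$. I checked the induction: the base cases $\tau_0,\tau_1$ and the step all work out, and multiplying by $c\,d!$ recovers~\eqref{eq:moments}. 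The paper's route buys a closed form immediately and makes the reappearance of Narayana polynomials structurally transparent; your route is more mechanical but requires no insight beyond algebraicity of $T$, at the cost of guessing the ansatz and isolating the convolution identity. For~\eqref{eq:moments-fringe} you only sketch the skeleton; it does carry over (the analogous recursion reads $D\,\sigma_d=\sigma_{d-1}B-\sum_{k=1}^{d-1}\sigma_k\sigma_{d-k}$, and the factor $(1-u)/u=-(1-1/u)$ arising from $\sigma_{d-1}B$ is what forces the argument $1/u$ in the ansatz $\sigma_d= u^{d(r+2)}(1-u)\tilde N_{d-1}(1/u)/\bigl((1-u^{r+2})^{d+1}(1-u^{r+1})^{d-1}\bigr)$, rather than Proposition~\ref{prop:narayana-reverse}, which is only an a posteriori explanation). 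Writing out that second induction would complete the proof.
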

\begin{remark}
  For $d\ge 2$, $u^d\tilde N_{d-1}(u^{-1})$ can be replaced by $\tilde
  N_{d-1}(u)$ in \eqref{eq:moments-fringe},
  see \eqref{eq:Narayana-reverse}.
\end{remark}

\begin{proof}
  We use the abbreviations
  \begin{equation*}
    a\coloneqq\frac{u
      (1-u^{r+1})^{2}}{(1-u^{r+2})^{2}},\qquad
    b\coloneqq \frac{u^{r+1}(1-u)^{2}}{(1-u^{r+2})^{2}}, \qquad
    c\coloneqq \frac{1 - u^{r+2}}{(1-u^{r+1})(1+u)}.
  \end{equation*}
  We consider the exponential generating function of $\partial^d/(\partial v)^d
  G_r(z, v)$ to be a Taylor series and obtain
  \begin{equation*}
    \sum_{d\ge 0}\frac1{d!}\frac{\partial^d}{\partial v^d} G_r(z,
      v) q^d  = G_r(z, v+q).
  \end{equation*}

  By Proposition~\ref{prop:cut-leaves:gf}, extracting the coefficient of $q^d$ yields
  \begin{equation*}
    \frac{\partial^d}{\partial v^d} G_r(z, v)\Bigr|_{v=1} = d![q^d]G_r(z,
    1+q)=d!c[q^d]T(a(1+q), b(1+q)).
  \end{equation*}
  We have
  \begin{align*}
    T(a(1+q), b(1+q))&=
    \frac{1-(a-b)(1+q) -
    \sqrt{1-2(1+q)(a+b)+(1+q)^2(a-b)^2}\,}2\\
    &= \frac{1-(a-b)-(a-b)q}2\\
    &\qquad-
    \frac{\sqrt{1-2(a+b) +(a-b)^2 - 2q(a+b-(a-b)^2)+q^2(a-b)^2}\,}2.
  \end{align*}

  By using the fact that
  \begin{equation*}
    1-2(a+b) + (a-b)^2 = \Delta^2\quad  \text{ for }\quad \Delta = \frac{(1-u)(1-u^{r+1})}{1-u^{r+2}}
  \end{equation*}
  and by choosing $\alpha$ and $\beta$ such that
  \begin{equation*}
    \alpha+\beta = \frac{a+b-(a-b)^2}{\Delta^2},\qquad
    \alpha-\beta = \frac{a-b}{\Delta},
  \end{equation*}
  we obtain

  \begin{align*}
    T(a(1+q), b(1+q)) &= \frac{\Delta}{2}\Bigl(\frac1{\Delta}-(\alpha-\beta)-(\alpha-\beta)q -
                        \sqrt{1 - 2q(\alpha+\beta)+q^2(\alpha-\beta)^2}\,\Bigr)\\
    &=\frac{\Delta(\frac1\Delta -1-(\alpha-\beta))}2 + \Delta T(\alpha q, \beta q).
  \end{align*}
  Extracting the coefficient of $q^d$ for $d\ge 1$ yields
  \begin{equation*}
    \frac{\partial^d}{\partial v^d} G_r(z, v) \Bigr|_{v=1}
    =cd!\Delta [q^d]\sum_{d\ge 1}\alpha^dq^d \tilde N_{d-1}\Bigl(\frac{\beta}{\alpha}\Bigr)
    =cd!\Delta \alpha^d \tilde N_{d-1}\Bigl(\frac{\beta}{\alpha}\Bigr)
  \end{equation*}
  where \eqref{eq:gf-narayana:quotient} has been used.

  Noting that
  \begin{equation*}
    \alpha = \frac{u}{(1-u)(1-u^{r+1})}\quad\text{ and }\quad
    \beta = \frac{u^{r+1}}{(1-u)(1-u^{r+1})}
  \end{equation*}
  completes the proof of \eqref{eq:moments}.

  For the proof of \eqref{eq:moments-fringe}, we proceed in the same way and
  use the identity
  \begin{equation*}
    T(a, b(1+q)) = \frac{1-\Delta-(a-b)}{2} +  \Delta T(\alpha'q, \beta'q)
  \end{equation*}
  for
  \begin{equation*}
    \alpha'= \frac{u^{r+2}}{(1-u^{r+1})(1-u^{r+2})}, \qquad
    \beta'=\frac{u^{r+1}}{(1-u^{r+1})(1-u^{r+2})}.
  \end{equation*}

  \ifdetails
  In fact, we have
  \begin{align*}
    T(a, b(1+q)) &=\frac{1-(a-b-bq) -
                           \sqrt{1-2(a+b+bq) + (a-b-bq)^2}\,}{2}\\
    &=\frac{(1-(a-b))-(-bq) -
                           \sqrt{\Delta^2 -2q(b+b(a-b)) + b^2q^2}\,}2\\
    &=\frac{1-\Delta-(a-b)}{2} + \Delta\frac{1-(\alpha'-\beta')q
    -\sqrt{1-2q(\alpha'+\beta') + (\alpha'-\beta')^2q^2}\,}2\\
    &=\frac{1-\Delta-(a-b)}{2} +  \Delta T(\alpha'q, \beta'q)
  \end{align*}
  where $\alpha'$ and $\beta'$ have been chosen such that
  \begin{equation*}
    \alpha'+\beta'=\frac{b+ba-b^2}{\Delta^2} \text{ and }
    \alpha'-\beta' = -\frac{b}{\Delta},
  \end{equation*}
  which implies the values for $\alpha'$ and $\beta'$ given above.

  Thus the $q$th derivative of the generating function $G_{r}(z,1,1+q)$ is
  \begin{equation*}
    d!\Delta c{\alpha'}^d \tilde N_{d-1}\Bigl(\frac{\beta'}{\alpha'}\Bigr),
  \end{equation*}
  which proves \eqref{eq:moments-fringe}.
  \fi
\end{proof}

From the proof of Proposition~\ref{prop:cut-leaves:moments}, we extract the
following identities for the modified Narayana polynomials.
\begin{remark}
  For $d\in \Z_{\geq 1}$ the power series identities
  \begin{align}
    \label{eq:series-identity1}
    \sum_{n\geq 1} \binom{n}{d} \frac{u^{n-d} (1 - ux)^{2n+d-1} (1 - u)^{d-1}}{(1 -
      u^{2}x)^{2n-1}} \tilde{N}_{n-1} \Big(\frac{x (1-u)^{2}}{(1-ux)^{2}}\Big) &= \tilde{N}_{d-1}(x)\\
    \label{eq:series-identity2}
    \sum_{n\geq 1} \frac{u^{n-2d} (1 - ux)^{2n-d-1} (1-u)^{2d-1}}{(1 - u^{2}x)^{2n-d-1} d!}
    \tilde{N}_{n-1}^{(d)}\Big(\frac{x (1-u)^{2}}{(1-ux)^{2}}\Big) &= \tilde{N}_{d-1}\Bigl(\frac1u\Bigr)
  \end{align}
  hold, where $\tilde{N}_{n-1}^{(d)}$ denotes the $d$th derivative of
  $\tilde{N}_{n-1}$.
\end{remark}
\begin{proof}
  In the proof of Proposition~\ref{prop:cut-leaves:moments}, we showed that
  \begin{equation*}
    \frac{\partial^d}{\partial v^d}cT(av, bv)\Bigr|_{v=1}=cd!\Delta \alpha^d
    \tilde N_{d-1}\Bigl(\frac{\beta}{\alpha}\Bigr).
  \end{equation*}
  Expanding the left side using \eqref{eq:gf-narayana:quotient} and evaluating
  the derivative yields~\eqref{eq:series-identity1} (where $u^r$ has been
  replaced by the independent variable $x$).

  The identity \eqref{eq:series-identity2} is proved in the same way.
\end{proof}

\begin{corollary}\label{corollary:leaves-exact-exp-val}
  The expected value of $X_{n+1,r}$ is explicitly given by
  \begin{align*}
    \E X_{n+1,r}&=\frac{1}{C_{n}}\sum_{\ell\geq1}\bigg(\binom{2n}{n+1-\ell(r+1)}-\binom{2n}{n-\ell(r+1)}\bigg).
  \end{align*}
\end{corollary}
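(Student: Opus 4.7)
The plan is to specialize the factorial-moment formula of Proposition~\ref{prop:cut-leaves:moments} to the first moment and then convert the coefficient extraction in $z$ to one in $u$ via the substitution $z = u/(1+u)^{2}$, after which the identity reduces to a routine geometric and binomial expansion.

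First, setting $d=1$ in~\eqref{eq:moments} and using $\tilde{N}_{0}(x)=x\cdot N_{0}(x)=x$, I obtain
\[
  \E X_{n+1,r} = \frac{1}{C_{n}}[z^{n+1}]\frac{u^{r+1}}{(1+u)(1-u^{r+1})}.
\]
Next, I convert this $z$-coefficient extraction to a $u$-coefficient extraction. From $z = u/(1+u)^{2}$ one has $dz/du = (1-u)/(1+u)^{3}$, and a standard residue calculation (equivalent to Lagrange inversion with $\phi(u)=(1+u)^{2}$) yields
\[
  [z^{m}]f(u) = [u^{m}]f(u)\,(1-u)(1+u)^{2m-1}
\]
for every formal power series $f$ with $f(0)=0$. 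Applied with $m=n+1$, this gives
\[
  \E X_{n+1,r} = \frac{1}{C_{n}}[u^{n+1}]\frac{u^{r+1}(1-u)(1+u)^{2n}}{1-u^{r+1}}.
\]

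Finally, I expand $u^{r+1}/(1-u^{r+1}) = \sum_{\ell\geq 1}u^{\ell(r+1)}$ and distribute the factor $(1-u)$, rewriting the integrand as $\sum_{\ell\geq 1}(u^{\ell(r+1)} - u^{\ell(r+1)+1})(1+u)^{2n}$. Extracting $[u^{n+1}]$ termwise via the binomial theorem then produces
\[
  \sum_{\ell\geq 1}\left(\binom{2n}{n+1-\ell(r+1)} - \binom{2n}{n-\ell(r+1)}\right),
\]
which is effectively a finite sum since the binomials vanish once $\ell(r+1)>n+1$. The only step requiring any care is the coefficient-extraction identity above, but this is essentially bookkeeping once $dz/du$ is computed; there is no real obstacle beyond tracking powers of $1\pm u$ correctly.
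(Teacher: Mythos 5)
Your proposal is correct and follows essentially the same route as the paper: specialize \eqref{eq:moments} to $d=1$, transfer the coefficient extraction from $z$ to $u$ (the paper does this via Cauchy's integral formula and the substitution $z=u/(1+u)^2$, which is exactly your Lagrange-inversion identity $[z^{m}]f(u)=[u^{m}]f(u)(1-u)(1+u)^{2m-1}$), then expand $(1-u^{r+1})^{-1}$ as a geometric series and read off binomial coefficients. No gaps.
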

\begin{proof}
  Using Proposition~\ref{prop:cut-leaves:moments} and Cauchy's integral formula, we have
  \begin{align*}
    C_{n}\E X_{n+1,r}&=[z^{n+1}]\frac{u^{r+1}}{(1+u)(1-u^{r+1})}\\
    &=\frac{1}{2\pi i}\oint_{\gamma}
    \frac{u^{r+1}}{(1+u)(1-u^{r+1})}\frac{dz}{z^{n+2}}\\
    &=\frac{1}{2\pi i}\oint_{\tilde\gamma}
    \frac{u^{r+1}(1-u)(1+u)^{2n}}{1-u^{r+1}}\frac{du}{u^{n+2}},
  \end{align*}
  where $\gamma$ is a circle around $0$ with a sufficiently small radius such that
  $\gamma'$, the image of $\gamma$ under the transformation, is a small contour circling
  $0$ exactly once as well.

Expanding $(1-u^{r+1})^{-1}$ into a geometric series and exchanging
integration and summation, we obtain
\begin{align*}
 C_{n} \E X_{n+1,r}&=\sum_{\ell\geq1}[u^{n+1-\ell(r+1)}](1-u)(1+u)^{2n},
\end{align*}
which implies the result.
\end{proof}

Having determined a closed form for this generating function allows us to analyze the
asymptotic behavior of $X_{n,r}$ in a relatively
straightforward way.

\begin{theorem}\label{thm:cut-leaves}
  Let $r\in\N_{0}$ be fixed and consider $n \to \infty$. Then the expected size and the corresponding variance of an
  $r$-fold cut plane tree are given by
  \begin{equation}\label{eq:cut-leaves:exp}
    \E X_{n,r} = \frac{n}{r+1} - \frac{r (r-1)}{6 (r+1)} + O(n^{-1}),
  \end{equation}
  and
  \begin{equation}\label{eq:cut-leaves:var}
    \V X_{n,r} = \frac{r (r + 2)}{6 (r+1)^{2}} n + O(1).
  \end{equation}

The factorial moments are asymptotically given by
\begin{equation*}
  \E X_{n,r}^{\underline{d}}=\frac{1}{(r+1)^{d}}n^{d}+\frac{d}{12(r+1)^{d}}(dr^{2} - 4dr -
  3r^{2} - 6d + 6r + 6)n^{d-1}+O(n^{d-3/2})
\end{equation*}
for $d\geq 1$. Note that all $O$-constants above depend implicitly on $r$.
\end{theorem}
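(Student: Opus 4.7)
The plan is to start from the exact factorial-moment formula in Proposition~\ref{prop:cut-leaves:moments},
\[ \E X_{n,r}^{\underline{d}} = \frac{1}{C_{n-1}}[z^n] H_{d,r}(u), \qquad H_{d,r}(u) \coloneqq \frac{u^d\, d!}{(1+u)(1-u^{r+1})^d (1-u)^{d-1}} \tilde{N}_{d-1}(u^r), \]
with $z = u/(1+u)^2$, and to carry out singularity analysis at $z = 1/4$, which corresponds to $u = 1$. The identity $(1-u)/(1+u) = \sqrt{1-4z}$ yields $u = (1-\sigma)/(1+\sigma)$ and $1 - u = 2\sigma/(1+\sigma) = 2\sigma - 2\sigma^2 + O(\sigma^3)$ for $\sigma = \sqrt{1-4z}$, so a Laurent expansion of $H_{d,r}$ in $s = 1-u$ translates directly into a singular expansion in $\sigma$ near $z=1/4$.

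First I would identify the dominant singular term. Using $1 - u^{r+1} = (r+1)s(1 + O(s))$, $1 + u = 2 + O(s)$, $u^d = 1 + O(s)$, and $\tilde{N}_{d-1}(u^r) = C_{d-1} + O(s)$, one sees that $H_{d,r}$ has a pole of order $2d-1$ in $s$ with leading coefficient $d!\,C_{d-1}/(2(r+1)^d)$, corresponding to a singularity of type $(1-4z)^{-(d-1/2)}$. The transfer theorem $[z^n](1-4z)^{-\alpha} \sim 4^n n^{\alpha-1}/\Gamma(\alpha)$ combined with $C_{n-1} \sim 4^n/(4\sqrt{\pi}\,n^{3/2})$ and the identity $\Gamma(d-1/2) = (2d-2)!\sqrt{\pi}/(4^{d-1}(d-1)!)$ makes all constants collapse to the clean leading term $n^d/(r+1)^d$.

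The real work lies in extracting the coefficient of $n^{d-1}$, which requires expanding $H_{d,r}$ one further order in $s$. The corrections come from $(1-u^{r+1})^{-d}$ (producing a factor $1 + \frac{dr}{2}s + O(s^2)$), from $u^d/(1+u)$ (giving $\frac12(1 - ds + s/2) + O(s^2)$), and from $\tilde{N}_{d-1}(u^r) = C_{d-1} - \frac{r}{2}\binom{2d-2}{d-1}s + O(s^2)$, using Proposition~\ref{prop:narayana-derivative} for $\tilde{N}_{d-1}'(1)$. After multiplying these out, converting to $\sigma$ via $s = 2\sigma(1-\sigma) + O(\sigma^3)$, and applying the transfer theorem to the resulting $(1-4z)^{-(d-3/2)}$ contribution (together with the $O(1/n)$ correction of the leading transfer and of $1/C_{n-1}$), the coefficient of $n^{d-1}$ simplifies to $d(dr^2 - 4dr - 3r^2 - 6d + 6r + 6)/(12(r+1)^d)$. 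The intermediate $(1-4z)^{-(d-1)}$ term that a priori would contribute at order $n^{d-1/2}$ has to be tracked; one needs to verify that the contributions from $s^{-(2d-1)}$ and $s^{-(2d-2)}$ combine so that what remains after the $n^{d-1}$ term is genuinely $O(n^{d-3/2})$.

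Finally, \eqref{eq:cut-leaves:exp} is simply the $d=1$ case, and \eqref{eq:cut-leaves:var} follows from $\V X_{n,r} = \E X_{n,r}^{\underline{2}} + \E X_{n,r} - (\E X_{n,r})^2$: the $n^2/(r+1)^2$ contributions cancel, and the remaining coefficient of $n$ simplifies algebraically to $r(r+2)/(6(r+1)^2)$. The main obstacle is the bookkeeping in the sub-leading singular expansion, where contributions from four different factors must be combined consistently in both $s$ and $\sigma$; since the final answer is a polynomial in $d$ and $r$, this step is well suited to verification with the SageMath worksheet \texttt{factorial\_moments\_leaves.ipynb} referenced in the introduction.
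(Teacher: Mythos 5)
Your proposal is correct and follows essentially the same route as the paper: expand the exact moment formula from Proposition~\ref{prop:cut-leaves:moments} around $u=1$, convert to powers of $(1-4z)$ via the expansion of $(1-u)^{-\kappa}$, apply singularity analysis, normalize by $C_{n-1}$, and obtain the variance from $\V X_{n,r}=\E X_{n,r}^{\underline{2}}+\E X_{n,r}-(\E X_{n,r})^{2}$, with the general-$d$ bookkeeping delegated to SageMath exactly as the paper does. The only detail you gloss over is the verification that $z=1/4$ is the unique dominant singularity (the paper notes that all singularities are roots of unity in $u$, which the substitution maps to real points $\ge 1/4$, with only $u=1$ landing on $1/4$).
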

\begin{proof}
  In a nutshell, we want to extract the growth of the derivatives of the generating functions $\frac{\partial^{d}
    }{\partial v^{d}}G_{r}(z,1)$, as dividing these quantities by $C_{n-1}$ yields the
  factorial moments. We want to extract the growth by
  means of singularity analysis (cf.~\cite{Flajolet-Odlyzko:1990:singul}).

  In order to do so, we first need to establish the location of the dominant singularity
  of these generating functions, which are explicitly given in~\eqref{eq:moments}.

  The singularities of \eqref{eq:moments} are roots of
  unity in terms of $u$. Substituting back $u=(1-\sqrt{1-4z}\,)/(2z)-1$ maps these roots
  of unity to real numbers greater or equal to $1/4$ and only $u=1$
  is mapped to $z=1/4$. Thus $z=1/4$ is the dominant singularity of
  \eqref{eq:moments}. A more detailed treatment of these analytic properties of the
  substitution $z = u/(1+u)^{2}$ can be found in~\cite[Proposition
  2.3]{Hackl-Heuberger-Prodinger:2016:reduc-binar}.

As $N_{0}(x)=1$, we obtain the expansion
\begin{equation*}
  \frac{1}{2(r+1)}(1-u)^{-1}-\frac{1}{4}+\frac{r^{2}-r-3}{24(r+1)}(1-u)+O((1-u)^{2})
\end{equation*}
for the function on the right-hand side of~\eqref{eq:moments} with $d=1$.
Then, the expansion 
\begin{multline}\label{eq:expansion-u-in-z}
  (1-u)^{-\kappa}=2^{-\kappa}(1-4z)^{-\kappa/2}+2^{-\kappa}\kappa(1-4z)^{-(\kappa-1)/2} \\ + 2^{-\kappa}\frac{\kappa(\kappa-1)}{2}(1-4z)^{-(\kappa-2)/2}+O((1-4z)^{-(\kappa-3)/2})
\end{multline}
for fixed $\kappa \in \C$ yields
\begin{equation*}
 \frac{1}{4(r+1)}(1-4z)^{-1/2}+\frac{r^{2}-r-3}{12(r+1)}(1-4z)^{1/2}+O((1-4z)^{3/2})+\text{power
   series in } (1-4z).
\end{equation*}
By singularity analysis, the $n$th coefficient, normalized by $C_{n-1}$, is asymptotically
\begin{align*}
 \E X_{n,r}=\frac{n}{r+1}-\frac{r(r-1)}{6(r+1)}+O(n^{-1})
\end{align*}
using
\begin{equation*}
C_{n-1}= 4^{n-1}\frac{1}{n^{3/2}\sqrt{\pi}\,}\Big(1+\frac{3}{8}n^{-1}+O(n^{-2})\Big).
\end{equation*}

The higher order factorial moments follow similarly by expanding the function on the
right-hand side of~\eqref{eq:moments} for general $d > 1$ around $u=1$ with the help of
SageMath, where in particular the explicit values of the derivatives of the Narayana
polynomials from Proposition~\ref{prop:narayana-derivative} are required.

Singularity analysis of the resulting expansion yields the expression given in the
statement of the theorem. Finally, note that the variance can be computed by using
\begin{equation*}
  \V X_{n,r}=\E X_{n,r}^{\underline{2}}+\E X_{n,r}-(\E X_{n,r})^{2}.
\end{equation*}
\end{proof}

\begin{theorem}
  The size $X_{n,r}$ of the tree obtained from a random plane tree with $n$
  nodes by cutting it $r$-times  is, after standardization,
  asymptotically normally distributed for $n\to\infty$ and
  fixed $r$,
  i.e.,
  \begin{equation*}
    \frac{X_{n,r}-\dfrac{n}{r+1}}{\sqrt{\dfrac{r (r + 2)}{6 (r+1)^{2}}
        n}\,}\convdistr\mathcal N(0,1).
  \end{equation*}
  To be more precise, for $x\in \R$ we have
  \[ \P\Big(\frac{X_{n,r} - n\mu}{\sqrt{\sigma^{2} n}\,} \leq x\Big) = \frac{1}{\sqrt{2\pi}\,}
    \int_{-\infty}^{x} e^{-t^{2}/2}~dt + O(n^{-1/2}), \]
  with $\mu = \frac{1}{r+1}$ and $\sigma^{2} = \frac{r (r+2)}{6 (r+1)^{2}}$ and where the
  $O$-constant depends implicitly on $r$.

As $I_{n,r-1}\overset{d}{=}X_{n,r}$, the same also holds for this
random variable.
\end{theorem}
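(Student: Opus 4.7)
The plan is to derive the central limit theorem with the Berry--Esseen rate $O(n^{-1/2})$ by applying Hwang's quasi-power theorem to the probability generating function $\E v^{X_{n,r}}$, supported by a uniform singularity analysis of $G_r(z,v)$ from Proposition~\ref{prop:cut-leaves:gf}. From \eqref{eq:pgf-cut-leaves}, $\E v^{X_{n,r}}=(a_{n,r}+[z^n]G_r(z,v))/C_{n-1}$, where $a_{n,r}/C_{n-1}$ tends to $0$ polynomially fast (as follows from the asymptotic below), uniformly in $v$ on a neighborhood of $1$, and therefore does not affect the final Berry--Esseen rate. The core of the argument is then a uniform asymptotic expansion of $[z^n]G_r(z,v)$ for $v$ in a complex neighborhood of $1$.

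Using the explicit representation $G_r(z,v)=c(u)\,T(a(u)v,b(u)v)$ with $z=u/(1+u)^2$, we locate the dominant singularity in $z$. The function $T$ is singular precisely where $\sqrt{a(u)v}+\sqrt{b(u)v}=1$, and this locus already passes through $u=1$ at $v=1$ (because $\sqrt{a(1)}+\sqrt{b(1)}=(r+1)/(r+2)+1/(r+2)=1$), coinciding with the branch point of $u\leftrightarrow z$. This confluence is handled by exploiting the involution $u\mapsto 1/u$: since $z(u)=u/(1+u)^2=1/(u+1/u+2)$ is invariant under this involution, the two branches $u_+(v),u_-(v)$ of the singular curve near $u=1$ satisfy $u_+u_-=1$ and therefore both map to the same $z$-value
\begin{equation*}
  \rho(v)=\frac{1}{u_+(v)+u_-(v)+2}.
\end{equation*}
Clearing square roots in the defining equation $(1-(a+b)v)^2=4v^2ab$ shows that $u_\pm(v)$ are roots of a polynomial in $u$ with coefficients polynomial in $v$; the symmetric function $u_++u_-$ is hence rational in $v$, so $\rho(v)$ is analytic at $v=1$ with $\rho(1)=1/4$ and $\rho(v)=1/4-(v-1)/(4(r+1))+O((v-1)^2)$. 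The remaining singularities of $c(u)$ at roots of unity of $u^{r+1}$ and $u^{r+2}$ other than $u=1$ correspond to $z$-values of modulus strictly greater than $1/4$, and stay subdominant for $v$ in a sufficiently small neighborhood.

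Near $z=\rho(v)$ the generating function admits a singular expansion of the form $G_r(z,v)=g(z,v)-h(z,v)\sqrt{1-z/\rho(v)\,}$ with $g,h$ analytic at $(\rho(1),1)$ and $h(\rho(1),1)\neq 0$. The transfer theorem \cite{Flajolet-Odlyzko:1990:singul}, applied uniformly on a common $\Delta$-domain, yields
\begin{equation*}
  [z^n]G_r(z,v)=\frac{h(\rho(v),v)}{2\sqrt{\pi}\,}\,\rho(v)^{-n}\,n^{-3/2}\bigl(1+O(n^{-1})\bigr)
\end{equation*}
uniformly for $v$ in this neighborhood. Dividing by $C_{n-1}\sim 4^{n-1}/(\sqrt{\pi}\,n^{3/2})$ gives the quasi-power representation $\E v^{X_{n,r}}=A(v)B(v)^n(1+O(n^{-1}))$ with $B(v)=1/(4\rho(v))$ and $A(1)=B(1)=1$. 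Since $B'(1)$ and $\sigma^2(B):=B''(1)+B'(1)-B'(1)^2$ must match the mean and variance computed in Theorem~\ref{thm:cut-leaves}, one obtains $\sigma^2(B)=r(r+2)/(6(r+1)^2)>0$ for $r\ge 1$ (the case $r=0$ being trivial since $X_{n,0}\equiv n$), so the variability condition of Hwang's theorem is satisfied. Hwang's quasi-power theorem then delivers both the convergence $(X_{n,r}-n\mu)/\sqrt{n\sigma^2}\convdistr\mathcal{N}(0,1)$ and the Berry--Esseen estimate of order $O(n^{-1/2})$; the claim for $I_{n,r-1}$ follows immediately from $I_{n,r-1}\overset{d}{=}X_{n,r}$.

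The main obstacle is the uniformity of the singularity analysis through the confluence at $(z,v)=(1/4,1)$. Viewed naively in the $u$-plane, the singularity is not analytic in $v$ at $v=1$ (it moves like $\sqrt{v-1}$); the structural observation that $\rho(v)$ is nonetheless analytic in $v$ relies on the $u\mapsto 1/u$ symmetry of $z(u)$, which effectively quotients out the non-analytic part. Once this is in place and the subdominance of the other singularities of $c(u)$ is verified on a common $\Delta$-domain, the remainder of the argument is a routine application of the quasi-power machinery.
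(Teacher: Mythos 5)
Your argument is sound in substance but follows a genuinely different route from the paper. The paper performs no bivariate singularity analysis: it writes the number of deleted nodes $n-X_{n,r}$ as an additive tree functional whose toll function is the indicator that the tree has height less than $r$, shows in Lemma~\ref{lem:cut-leaves:toll-decreasing} that the mean of this toll function is exponentially small (the generating function of height-bounded trees is rational with all poles of modulus strictly larger than $1/4$), and then invokes Wagner's central limit theorem for additive functionals \cite{Wagner:2015:centr-limit}, upgrading the rate to $O(n^{-1/2})$ by replacing the unquantified quasi-power theorem used inside that proof with the quantified version of \cite{Heuberger-Kropf:2016:higher-dimen}. You instead extract the quasi-power form directly from Proposition~\ref{prop:cut-leaves:gf}. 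Your key structural point---that the perturbed singularity $\rho(v)$ is analytic in $v$ even though the singular $u$-values move like $\sqrt{v-1}$, because $a(u)$ and $b(u)$ depend on $u$ only through $z$ and the two branches $u_\pm(v)$ are exchanged by $u\mapsto 1/u$---is correct and resolves the confluence at $(z,v)=(1/4,1)$; equivalently, $a$ and $b$ are rational in $z$ with poles of modulus exceeding $1/4$, so the discriminant $1-2(a+b)v+(a-b)^2v^2$ is analytic in $(z,v)$ near $(1/4,1)$ with nonvanishing $z$-derivative, and Weierstrass preparation yields $\rho(v)$ at once (and confirms $\rho'(1)=-1/(4(r+1))$, matching the mean). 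Your route is more explicit and would also deliver local limit theorems or moderate deviations; the paper's route is softer, needs no uniformity in $v$, and transfers verbatim to the other reductions.

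One step needs repair: you assert that $a_{n,r}/C_{n-1}\to 0$ ``polynomially fast'' and that this suffices. Hwang's theorem requires uniformity of the quasi-power representation on a fixed complex neighbourhood of $v=1$, where $\abs{B(v)}^{n}$ can be as small as $e^{-cn}$; a polynomially small additive perturbation of $[z^n]G_r(z,v)$ in \eqref{eq:pgf-cut-leaves} is then not negligible relative to $A(v)B(v)^n$. Fortunately the true decay is exponential: $a_{n,r}$ counts trees of height less than $r$, whose generating function $zF_{r-1}(-z)/F_{r}(-z)$ is rational with all poles of modulus greater than $1/4$, so $a_{n,r}/C_{n-1}=O(c^{n})$ for some $c<1$ (this is exactly Lemma~\ref{lem:cut-leaves:toll-decreasing}); shrinking the $v$-neighbourhood so that $c<\inf\abs{B(v)}$ makes the perturbation harmless. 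With that correction, and restricting to $r\ge 1$ as you note (the case $r=0$ is degenerate with zero variance), your proof goes through.
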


The rest of this section is devoted to the proof of this central limit
theorem. In order to derive the fact that the number of remaining nodes after $r$
reductions is asymptotically normally distributed, we first show that the number of nodes
that are deleted after $r$ reductions is asymptotically normally distributed. Then, as the
sum of the number of remaining nodes and the number of deleted nodes is equal to the
original tree size, we obtain immediately that the number of remaining nodes has to be
asymptotically normally distributed as well.

We begin by considering the function $F_{r}\colon \mathcal{T} \to \N_{0}$ which
maps a plane tree $\tau$ to the number of nodes that are deleted when reducing the
tree $r$ times, i.e.\ the difference between the size of $\tau$ and the size of
$\rho^{r}(\tau)$. Let $\tau_{n}$ now denote a plane tree with $n$ nodes.

For the sake of convenience, we consider $F_{r}(\tau_{n})$ to be $n$ if $r$ is
larger than the maximal number of reductions that can be applied to $\tau_{n}$ before the
tree cannot be reduced further. In particular, this means that $F_{r}(\innernode) = 1$
for $r\geq 1$.

It is easy to see that the parameter $F_{r}(\tau_{n})$ is a so-called \emph{additive tree
  parameter}, meaning that
\[ F_{r}(\tau_{n}) = F_{r}(\tau_{i_{1}}) + \cdots + F_{r}(\tau_{i_{\ell}}) + f_{r}(\tau_{n})  \]
holds, where $\tau_{i_{1}}$, \dots, $\tau_{i_{\ell}}$ are the subtrees rooted at the children of
the root of $\tau_{n}$, and $f_{r}\colon \mathcal{T} \to \{0,1\}$ is a \emph{toll
  function} recursively defined by
\[ f_{r}(\tau_{n}) = \begin{cases} 1, & \text{ if } F_{r-1}(\tau_{i_{k}}) = i_{k} \text{
      for all } k = 1,\ldots, \ell,\\
    0, & \text{ otherwise, }
  \end{cases}  \]
for $r\geq 1$ and $f_{0}(\tau_{n}) = 0$.

In order to prove asymptotic normality for additive tree parameters, we can use
\cite[Theorem~2]{Wagner:2015:centr-limit}, which requires us to show that the expected
value of the toll function is exponentially decreasing in $n$. This is done in the
following lemma.

\begin{lemma}\label{lem:cut-leaves:toll-decreasing}
  The expected value of $f_{r}(\tau_{n})$ is exponentially decreasing in $n$.
\end{lemma}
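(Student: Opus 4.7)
The plan is to recognise that the toll function $f_r$ is precisely the indicator that $\tau_n$ has small height, and then to invoke the classical de Bruijn--Knuth--Rice estimate for the number of bounded-height plane trees.

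First, I would unwind the recursive definition of $f_r$. The condition $F_{r-1}(\tau_{i_k}) = i_k$ states that the subtree $\tau_{i_k}$ is \emph{completely} destroyed after $r-1$ rounds of leaf-cutting, which, since a single application of $\rho$ lowers the height of a plane tree by exactly one, is equivalent to $\operatorname{height}(\tau_{i_k}) \le r-2$; imposing this for every child subtree is in turn equivalent to $\operatorname{height}(\tau_n) \le r-1$ (the single-node case $\tau_n = \innernode$, of height $0$, is covered by the vacuous quantification over children). This yields the clean identification
\[
  \E f_r(\tau_n) \;=\; \P\bigl(\operatorname{height}(\tau_n) \le r-1\bigr).
\]

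Next, I would invoke the classical analysis of plane tree height already alluded to after \eqref{eq:fib-poly-u}: the ordinary generating function $B_{r-1}(z)$ enumerating plane trees of height at most $r-1$ is rational, expressible through the Fibonacci polynomials of Definition~\ref{def:fib-poly}, and its dominant positive singularity equals $\rho_r = 1/(4\cos^2(\pi/r))$. For every fixed $r \ge 2$ this satisfies $\rho_r > 1/4$; the case $r = 1$ is trivial, since only the single-node tree has height zero. A direct Cauchy coefficient bound on any circle of radius $\eta$ with $1/4 < \eta < \rho_r$ then yields $[z^n] B_{r-1}(z) = O(\eta^{-n})$, and combining this with $C_{n-1} = \Theta(4^n/n^{3/2})$ gives
\[
  \E f_r(\tau_n) \;=\; \frac{[z^n] B_{r-1}(z)}{C_{n-1}} \;=\; O\bigl((4\eta)^{-n} n^{3/2}\bigr),
\]
which is exponentially small in $n$ since $4\eta > 1$.

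The only conceptually non-mechanical step is the identification of $f_r$ with the small-height indicator; once that is in place, the analytic bound is entirely routine. I do not anticipate a substantial obstacle, though some care is required in keeping the two meanings of the symbol $F_r$ (Fibonacci polynomial versus number-of-deleted-nodes functional on trees) apart.
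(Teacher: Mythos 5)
Your proposal is correct and follows essentially the same route as the paper: both identify $\E f_r(\tau_n)$ with the probability that $\tau_n$ has height at most $r-1$ (via the observation that one leaf-cutting round lowers the height by exactly one), and both then use the classical de Bruijn--Knuth--Rice generating function for bounded-height plane trees, whose dominant singularity $1/(4\cos^2(\pi/r))$ exceeds $1/4$, to conclude exponential decay after dividing by $C_{n-1}\sim 4^{n-1}/(\sqrt{\pi}\,n^{3/2})$. The only cosmetic difference is that you bound the coefficients by a Cauchy estimate on a circle of radius $\eta\in(1/4,\rho_r)$ rather than writing out the partial-fraction expansion over the roots of the Fibonacci polynomial as the paper does.
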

\begin{remark}
  Of course, $n-F_{r}(\tau_{n})$ is also an additive parameter. However,
  the expected value of the corresponding growth function is not
  exponentially decreasing.
\end{remark}
\begin{proof}
  Define
  \begin{equation*}
    q_{n,r}=\E(f_{r}(\tau_{n}))=\P(F_{r-1}(\tau_{i_{k}})=i_{k}\text{ for  all }k=1,\ldots,\ell)=\P(F_{r}(\tau_{n})=n)
  \end{equation*}
and the corresponding generating function
\begin{equation*}
  Q_{r}(z)=\sum_{n\geq1}C_{n-1}q_{n,r}z^{n}.
\end{equation*}
Observe that $F_{r}(\tau_{n}) = n$ holds if and only if $\tau_{n}$ has height less than
$r$, as removing all leaves from a tree reduces its height by precisely one. Therefore,
the generating function $Q_{r}(z)$ is the generating function enumerating trees of height
less than $r$.

It is well-known (cf.~\cite{Bruijn-Knuth-Rice:1972}) that the generating function for
plane trees of height less than $r$ can be expressed in terms of Fibonacci
polynomials as
\[ Q_{r}(z) = \frac{z F_{r-1}(-z)}{F_{r}(-z)}.  \]
The roots of $F_{r}(-z)$ are also well-known and can be written as
$\alpha_{j,r}=(4\cos^{2}(j\pi/r))^{-1}$ for $j=1$, \dots, $\lfloor (r-1)/2\rfloor$.

Thus $Q_{r}(z)$ is a rational function and its coefficients have the form
\begin{equation*}
  C_{n-1}q_{n,r}=\sum_{j}c_{j,r}\alpha_{j,r}^{-n}
\end{equation*}
for constants $c_{j,r}$. We have $|\alpha_{j,r}| > 4$. As
\begin{equation*}
  C_{n-1}\sim\frac{4^{n-1}}{\sqrt{\pi}\,n^{3/2}},
\end{equation*}
there exists a constant $c\in(0,1)$ such that $q_{n,r}=O(c^{n})$.
\end{proof}

 Thus, by the strategy discussed above, we find that not only $F_{r}(\tau_{n})$ but also
 $X_{n,r} = n - F_{n,r}$ is asymptotically normally distributed.
\begin{remark}
  Note that the fact that $F_{1}(\tau_{n})$ is asymptotically normally distributed means that the Narayana numbers are
  asymptotically normally distributed, see for example \cite[Theorem~3.13]{Drmota:2009:random}.
\end{remark}

As sketched above, Lemma~\ref{lem:cut-leaves:toll-decreasing} allows us to
apply~\cite[Theorem 2]{Wagner:2015:centr-limit} in order to prove that $F_{r}(\tau_{n})$,
and therefore also $X_{n,r} = n - F_{r}(\tau_{n})$ is asymptotically normally
distributed. All that remains to prove is that the speed of convergence is $O(n^{-1/2})$.

We do so by noting that the proof for asymptotic normality in Wagner's theorem
is based on~\cite[Theorem 2.23]{Drmota:2009:random}, where a version of Hwang's
Quasi-Power Theorem~\cite{Hwang:1998} without quantification of the speed of convergence is used. Replacing
this argument with the multi-dimensional quantified version given
in~\cite{Heuberger-Kropf:2016:higher-dimen} then gives us the desired speed of convergence
of $O(n^{-1/2})$.

\section{Cutting Paths}\label{sec:cutting-paths}

\subsection{The Expansion Operator and Results}\label{sec:paths:results}
Let $\mathcal{P}$ denote the combinatorial class of paths, i.e.\ trees in which every node
is either a leaf or has precisely one child. The tree reduction $\rho\colon
\mathcal{T}\setminus\mathcal{P} \to \mathcal{T}$ which we will focus on in this section reduces
a tree by cutting away all paths of the tree. This operation is illustrated in
Figure~\ref{fig:paths:illustration}.
\begin{figure}[ht]
  \centering
      \[
    \begin{tikzpicture}[thick, scale=0.75, baseline={([yshift=-0.5em]current bounding
      box.center)}]
      \node[draw, circle] {}
      child[gray, dashed] {node[draw, circle] {} child {node[draw, circle] {} child{node[draw,
            rectangle] {}}}}
      child {node[draw, circle] {}
        child[gray, dashed] {node[draw, rectangle] {}}
        child[gray, dashed] {node[draw, rectangle] {}}
      }
      child[gray, dashed] {node[draw, circle] {} child {node[draw, rectangle] {}}};
    \end{tikzpicture}
    \quad \mapsto \quad
    \begin{tikzpicture}[thick, scale=0.75, baseline={([yshift=-0.5em]current bounding
      box.center)}]
      \node[draw, circle] {} child {node[draw, rectangle] {}};
    \end{tikzpicture}
    \]
  \caption{Illustration of the ``cutting paths''-operator $\rho$}
  \label{fig:paths:illustration}
\end{figure}
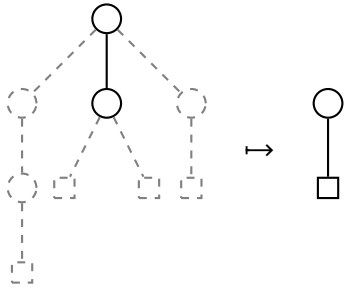

Analogously to our approach in Section~\ref{sec:cut-leaves:expansion}, we first determine
the corresponding expansion operator $\Phi$. In order to do so, we need the generating
function for the family of paths $\mathcal{P}$, which is given by $P = P(z,t) =
\frac{t}{1-z}$. For the sake of readability, we omit the arguments of $P$.

\begin{proposition}\label{prop:paths:expansion}
  Let $\mathcal{F}\subseteq \mathcal{T}$ be a family of plane trees with bivariate
  generating function $f(z,t)$, where $z$ marks inner nodes and $t$ marks leaves. Then the
  generating function for $\rho^{-1}(\mathcal{F})$, the family of trees whose reduction is
  in $\mathcal{F}$, is given by
  \begin{equation}\label{eq:paths:expansion}
    \Phi(f(z,t)) = (1-P) f\Big(\frac{z}{(1-P)^{2}}, \frac{zP^{2}}{(1-P)^{2}}\Big).
  \end{equation}
\end{proposition}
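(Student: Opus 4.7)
The plan is to mirror the proof of Proposition~\ref{prop:cut-leaves:expansion}. Linearity of $\Phi$ is clear from its interpretation as the generating-function counterpart of the preimage operation under $\rho$, so it suffices to determine $\Phi(z^n t^k)$ for an arbitrary monomial, i.e.\ for a fixed tree $\tau$ with $n$ inner nodes and $k$ leaves, and then read off the claim for $f(z,t)$ term-by-term.

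The key combinatorial observation I would use is that a node $v'$ of a tree $\tau'$ survives the reduction $\rho$ if and only if the subtree rooted at $v'$ is not itself a path, equivalently contains at least one node with two or more children. From this, the preimages of $\tau$ can be described node-by-node: each inner node $v$ of $\tau$ with $\ell$ children corresponds to an inner node $v'$ of $\tau'$ that retains those children in the same order but admits a possibly empty sequence of additional \emph{path-subtrees}---each counted by $P = t/(1-z)$---inserted at any of the $\ell + 1$ positions before, between, or after its original children, contributing the factor $z/(1-P)^{\ell+1}$. Each leaf $v$ of $\tau$ corresponds to an inner node of $\tau'$ all of whose children are path-subtrees; moreover there must be \emph{at least two} such path-children, since otherwise $v'$ would itself lie on a maximal path and be cut. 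This gives a per-leaf contribution of $z\cdot P^{2}/(1-P)$.

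Multiplying these contributions and using the edge-count identity $\sum_{v\text{ inner}}\ell_v = n+k-1$, I obtain
\begin{equation*}
  \Phi(z^n t^k) = z^{n+k}\cdot \frac{P^{2k}}{(1-P)^{k}}\cdot \frac{1}{(1-P)^{2n+k-1}} = \frac{z^{n+k}\,P^{2k}}{(1-P)^{2n+2k-1}},
\end{equation*}
which rearranges to $(1-P)\bigl(z/(1-P)^{2}\bigr)^{n}\bigl(zP^{2}/(1-P)^{2}\bigr)^{k}$, and~\eqref{eq:paths:expansion} then follows by linearity of $\Phi$.

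The only genuinely delicate point is the enforcement of the ``at least two path-children'' constraint at the nodes of $\tau'$ corresponding to leaves of $\tau$, since this is precisely what prevents such a node from being absorbed into a maximal path reaching further up toward the root. Everything else is routine bookkeeping, directly parallel to the leaf-cutting case, with the variable $t$ consistently replaced by the path generating function $P$ to account for the fact that we now expand with sequences of paths rather than sequences of single leaves.
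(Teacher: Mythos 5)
Your proposal is correct and follows essentially the same route as the paper: linearity reduces the problem to a single monomial $z^{n}t^{k}$, inner nodes receive optional sequences of paths at the $2n+k-1$ available positions, and each leaf must receive at least two path-children, yielding the same product formula $\Phi(z^{n}t^{k}) = z^{n}\bigl(zP^{2}/(1-P)\bigr)^{k}(1-P)^{-(2n+k-1)}$. The only cosmetic difference is that you justify the ``at least two'' constraint by noting that a node with a single path-child would itself be absorbed into a maximal path and cut, whereas the paper phrases it as avoiding ambiguity in the decomposition; both support the same count.
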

\begin{proof}
  The fact that $\Phi$ is a linear operator is obvious from a combinatorial point of view,
  meaning that we may concentrate on some tree $\tau$ with $n$ inner nodes and $k$ leaves,
  represented by $z^{n}t^{k}$.

  We follow the proof of Proposition~\ref{prop:cut-leaves:expansion} and observe that all
  possible tree expansions of $\tau$ can be obtained by the following
  operations: the leaves of $\tau$ are expanded by appending a sequence of at least two paths to each
  of them. Note that appending a single path to a leaf is not allowed, because this would just
  extend the path ending in that leaf, which causes ambiguity. Then, the inner nodes are
  expanded as well by appending (possibly empty) sequences of paths to the $2n+k-1$
  available positions between, before, and after their children.

  Translating this expansion to the language of generating functions yields
  \[ \Phi(z^{n}t^{k}) = z^{n} \Big(\frac{zP^{2}}{1-P}\Big)^{k}
    \frac{1}{(1-P)^{2n+k-1}},  \]
  which proves~\eqref{eq:paths:expansion}.
\end{proof}
\begin{corollary}\label{cor:paths:functional-equation}
  The generating function for plane trees $T(z,t)$ satisfies the functional
  equation
  \begin{equation}\label{eq:paths:functional-equation}
    T(z,t) = P + \Phi(T(z,t)).
  \end{equation}
\end{corollary}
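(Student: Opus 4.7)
The plan is to mirror exactly the short proof of the analogous corollary that followed Proposition~\ref{prop:cut-leaves:expansion} in the leaf-cutting section. The key structural observation is the disjoint-union decomposition $\mathcal{T} = \mathcal{P} \sqcup (\mathcal{T}\setminus\mathcal{P})$: every plane tree is either a path or is not, and $\mathcal{T}\setminus\mathcal{P}$ is by definition the domain on which the path-cutting reduction $\rho$ acts.

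The step that needs a brief justification is why $\rho^{-1}(\mathcal{T})$, as a subset of the domain of $\rho$, actually equals all of $\mathcal{T}\setminus\mathcal{P}$; equivalently, why $\rho$ is surjective onto $\mathcal{T}$. I would verify this by explicit construction: given any target $\tau\in\mathcal{T}$, attach to each leaf of $\tau$ at least two pendant paths of positive length. In the resulting expansion $\tilde\tau$, every former leaf of $\tau$ has at least two children, so no maximal path of $\tilde\tau$ can extend upward into $\tau$ itself; cutting all maximal paths therefore removes exactly the newly appended paths and recovers $\tau = \rho(\tilde\tau)$. In particular $\tilde\tau\in\mathcal{T}\setminus\mathcal{P}$, which shows $\rho^{-1}(\mathcal{T})=\mathcal{T}\setminus\mathcal{P}$.

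Translating the decomposition $\mathcal{T} = \mathcal{P}\sqcup \rho^{-1}(\mathcal{T})$ into generating functions then finishes the argument: the class $\mathcal{P}$ contributes $P = t/(1-z)$, while by Proposition~\ref{prop:paths:expansion} the class $\rho^{-1}(\mathcal{T})$ contributes $\Phi(T(z,t))$, yielding $T(z,t) = P + \Phi(T(z,t))$. There is no genuine obstacle here; the only point of care is distinguishing ``$\rho$ is not defined on paths'' (hence the $P$ summand sits outside the image of $\Phi$) from ``paths can still appear as reductions of larger trees'' (hence $\rho^{-1}(\mathcal{P})$ is nonempty and is subsumed in $\Phi(T(z,t))$ via the general formula of Proposition~\ref{prop:paths:expansion}).
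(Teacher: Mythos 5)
Your proof is correct and follows essentially the same route as the paper: surjectivity of $\rho$ gives $\rho^{-1}(\mathcal{T})=\mathcal{T}\setminus\mathcal{P}$, and the decomposition $\mathcal{T}=\mathcal{P}\sqcup\rho^{-1}(\mathcal{T})$ translates via Proposition~\ref{prop:paths:expansion} into $T=P+\Phi(T)$. The paper simply asserts surjectivity, whereas you supply the explicit witness (attach at least two pendant paths to each leaf of the target tree, so that no maximal path of the expansion reaches into the original tree), which is exactly the construction implicit in the expansion operator and is a welcome addition rather than a deviation.
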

\begin{proof}
  Surjectivity of $\rho$ implies $\rho^{-1}(\mathcal{T}) =
  \mathcal{T}\setminus\mathcal{P}$, which proves the statement after translating this into
  the language of generating functions with the help of $\Phi$.
\end{proof}

In the following proposition, we determine the generating function $G_{r}(z,v_{I},v_{L})$ measuring
the effect of applying the path reduction $r$ times on the size of the tree. Most
interestingly, we will see that the path connection is in fact strongly related to the leaf
reduction from the previous section.
\begin{proposition}\label{prop:paths:gf}
  The trivariate generating function $G_{r}(z,v_{I},v_{L}) = G_{r}^{\operatorname{P}}(z,v_{I},v_{L})$ enumerating plane trees whose
  paths can be cut at least $r$-times, where $z$ marks the tree size and $v_{I}$ and $v_{L}$ mark the
  number of inner nodes and leaves of the $r$-fold cut tree, respectively, is given by
  \begin{align*}
    G_{r}(z,v_{I},v_{L})&=\Phi^{r}(T(zv_{I},tv_{L}))\vert_{t=z} \\ &=
    \frac{1-u^{2^{r+1}}}{(1-u^{2^{r+1}-1})(1+u)}T\Bigg(\frac{u(1-u^{2^{r+1}-1})^{2}}{(1-u^{2^{r+1}})^{2}}v_{I},\frac{u^{2^{r+1}-1}(1-u)^{2}}{(1-u^{2^{r+1}})^{2}}v_{L}\bigg),
  \end{align*}
  where $z = u/(1+u)^{2}$.
\end{proposition}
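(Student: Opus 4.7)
The plan is to mirror the proof of Proposition~\ref{prop:cut-leaves:gf}. First, by the same combinatorial argument as in the leaves case, tracking inner nodes and leaves of the original (unexpanded) tree via $v_I$ and $v_L$ yields
\begin{equation*}
    G_r(z, v_I, v_L) = \Phi^r(T(zv_I, tv_L))|_{t=z}.
\end{equation*}
Since $\Phi$ is linear, it suffices to compute $\Phi^r(z^n t^k)|_{t=z}$. To handle the iteration, I would introduce the multiplicative auxiliary operator $\Psi(f(z,t)) = f(z/(1-P)^2, zP^2/(1-P)^2)$, so that $\Phi(f) = (1-P)\Psi(f)$. An induction argument analogous to the leaves case then gives
\begin{equation*}
    \Phi^r(f(z,t)) = \Psi^r(f(z,t)) \prod_{j=0}^{r-1}\bigl(1 - \Psi^j(P)\bigr),
\end{equation*}
and by multiplicativity $\Psi^r(z^n t^k) = \Psi^r(z)^n\,\Psi^r(t)^k$, reducing the task to finding closed forms for $g_r \coloneqq \Psi^r(z)|_{t=z}$, $f_r \coloneqq \Psi^r(t)|_{t=z}$, and $p_r \coloneqq \Psi^r(P)|_{t=z}$.

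Setting $R_r \coloneqq 2^{r+1}-1$ and using $z = u/(1+u)^{2}$, I would prove by induction on $r$ the closed forms
\begin{equation*}
    g_r = \frac{u(1-u^{R_r})^2}{(1-u^{R_r+1})^2}, \qquad f_r = \frac{u^{R_r}(1-u)^2}{(1-u^{R_r+1})^2}.
\end{equation*}
The induction step relies on the recurrences $g_{r+1} = g_r/(1-p_r)^2$ and $f_{r+1} = g_{r+1}\,p_r^{2}$ (read off directly from the definition of $\Psi$). The critical algebraic identity is
\begin{equation*}
    (1-u^{R_r+1})^2 - u(1-u^{R_r})^2 = (1-u)(1-u^{2R_r+1}),
\end{equation*}
which yields $1 - g_r = (1-u)(1-u^{R_{r+1}})/(1-u^{R_r+1})^2$ and, using $P = t/(1-z)$, the formula $p_r = f_r/(1-g_r) = u^{R_r}(1-u)/(1-u^{R_{r+1}})$. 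The doubling relation $R_{r+1} = 2R_r + 1$ is precisely what replaces the linear pattern $u^{r+1} \to u^{r+2}$ from the leaves case by the geometric pattern characteristic of path cutting.

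The final step is to telescope $\prod_{j=0}^{r-1}(1 - p_j)$. Writing
\begin{equation*}
    1 - p_j = \frac{(1 - u^{R_j})(1 + u^{R_j+1})}{1 - u^{R_{j+1}}}
\end{equation*}
shows that the denominators telescope, while the surviving factors $1 + u^{2^{j+1}}$ collapse via the classical identity $(1+u)(1+u^2)(1+u^4)\cdots(1+u^{2^r}) = (1 - u^{2^{r+1}})/(1-u)$. This yields the prefactor
\begin{equation*}
    \prod_{j=0}^{r-1}(1-p_j) = \frac{1 - u^{2^{r+1}}}{(1 - u^{2^{r+1}-1})(1+u)},
\end{equation*}
and combining with the closed forms for $g_r, f_r$ and reassembling the power series of $T(zv_I, tv_L)$ via linearity of $\Phi^r$ gives the claimed formula. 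The main obstacle is locating the algebraic identity that produces the doubling $R_{j+1} = 2R_j+1$; once this is recognized, the induction and the telescoping are routine.
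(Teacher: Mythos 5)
Your proposal is correct and follows essentially the same route as the paper: the identity $G_r=\Phi^r(T(zv_I,tv_L))|_{t=z}$, the auxiliary multiplicative operator $\Psi$, the product formula $\Phi^r(f)=\Psi^r(f)\prod_{j=0}^{r-1}(1-\Psi^j(P))$, closed forms for the iterates in the variable $u$, and the telescoping via $\prod_j(1+u^{2^j})=(1-u^{2^{r+1}})/(1-u)$. The only (harmless) variation is that you pin down $\Psi^r(P)|_{t=z}$ through the relation $p_r=f_r/(1-g_r)$ inside a simultaneous induction, whereas the paper establishes the explicit formula for $\Psi^r(P)|_{t=z}$ first from its own recurrence and then reads off the other two iterates.
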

\begin{proof}
  By the same reasoning as in the proof of Proposition~\ref{prop:cut-leaves:gf}, the
  generating function we are interested in is $G_{r}(z,v_{I},v_{L}) = \Phi^{r}(T(zv_{I}, tv_{L}))|_{t=z}$,
  meaning that we want to study the iterated application of $\Phi$. To do so, we consider
  the strongly related operator
  \[ \Psi(f(z,t)) \coloneqq f\Big(\frac{z}{(1-P)^{2}},\frac{zP^{2}}{(1-P)^{2}}\Big). \]
  The relation
  \[ \Phi^{r}(f(z,t)) = \Psi^{r}(f(z,t)) \prod_{j=0}^{r-1}(1 - \Psi^{j}(P))  \]
  can be proved easily by induction and enables us to determine the behavior of $\Phi$ via
  $\Psi$.

  First of all, for $r\geq 0$ and $r\geq 1$, the relations
  \[ \Psi^{r}(z) = \frac{z}{\prod_{j=0}^{r-1} (1 - \Psi^{j}(P))^{2}} \quad\text{ and
    }\quad \Psi^{r}(P) = \frac{z (\Psi^{r-1}(P))^{2}}{\prod_{j=0}^{r-1} (1 - \Psi^{j}(P))^{2} -
      z} \]
  can be proved easily by induction, respectively. Also observe that we can write $\Psi^{r}(t) =
  \Psi^{r}(z) \Psi^{r-1}(P)^{2}$. Now let $f_{r} = \Psi^{r}(z)|_{t=z}$, $g_{r} =
  \Psi^{r}(t)|_{t=z}$, and $h_{r} = \Psi^{r}(P)|_{t=z}$. With the help of the identity
  $\prod_{j=0}^{r} (1 + u^{2^{j}}) = \frac{1 - u^{2^{r+1}}}{1-u}$ we are able to prove the
  explicit formula
  \begin{equation}\label{eq:paths:psi-P}
    h_{r} = \frac{u^{2^{r+1} - 1} (1-u)}{1 - u^{2^{r+2} - 1}} = \frac{z^{2^{r+1} -
        1}}{F_{2^{r+2}-1}(-z)},
  \end{equation}
  where $z = u/(1+u)^{2}$ and the second equation is a consequence
  of~\eqref{eq:fib-poly-u}. Using~\eqref{eq:paths:psi-P}, we immediately find
  \[ f_{r} = \frac{u (1 - u^{2^{r+1} -1})^{2}}{(1 - u^{2^{r+1}})^{2}} = \frac{z
      F_{2^{r+1}-1}(-z)^{2}}{F_{2^{r+1}}(-z)^{2}} \quad\text{ and }\quad g_{r} =
    \frac{u^{2^{r+1}-1} (1-u)^{2}}{(1 - u^{2^{r+1}})^{2}} = \frac{z^{2^{r+1} -
        1}}{F_{2^{r+1}}(-z)^{2}}.  \]
  Putting everything together yields
  \begin{align*} \Phi^{r}(z^{n}t^{k})|_{t=z} &= \frac{z^{n + (2^{r+1}-1)k}
      F_{2^{r+1}-1}(-z)^{2n-1}}{F_{2^{r+1}}(-z)^{2n+2k-1}} \\ &= \frac{1 -
      u^{2^{r+1}}}{(1-u^{2^{r+1} -1})(1+u)} \Big(\frac{u (1 - u^{2^{r+1}-1})^{2}}{(1 -
      u^{2^{r+1}})^{2}}\Big)^{n}  \Big(\frac{u^{2^{r+1}-1}
      (1-u)^{2}}{(1-u^{2^{r+1}})^{2}}\Big)^{k},
    \end{align*}
  which directly implies the statement.
\end{proof}

The following result shows that there is an intimate connection between the ``cutting
leaves''-reduction from Section~\ref{sec:cutting-leaves} and the ``cutting
paths''-reduction, as can be seen after comparing the statement of
Proposition~\ref{prop:cut-leaves:gf} with the statement of Proposition~\ref{prop:paths:gf}.
\begin{corollary}\label{cor:paths:same-gf}
  The generating function $G_{r}(z,v_{I},v_{L}) = G_{r}^{\operatorname{P}}(z,v_{I},v_{L})$ measuring the change in size
  after cutting away all paths from plane trees $r$ times is equal to the generating
  function $G_{2^{r+1}-2}^{\operatorname{L}}(z,v_{I},v_{L})$ measuring the change in size after
  cutting away all leaves from plane trees $2^{r+1}-2$ times.
\end{corollary}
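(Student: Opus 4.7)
The plan is to prove the corollary by simply comparing the two explicit closed-form expressions for the generating functions in question, as provided by Proposition~\ref{prop:cut-leaves:gf} and Proposition~\ref{prop:paths:gf}. Both are given in terms of the same auxiliary substitution $z = u/(1+u)^{2}$, and both have the same structural shape: a prefactor times $T$ evaluated at arguments of the form $\frac{u(1-u^{a})^{2}}{(1-u^{b})^{2}} v_{I}$ and $\frac{u^{a}(1-u)^{2}}{(1-u^{b})^{2}} v_{L}$, where the two exponents $a$ and $b$ satisfy $b = a+1$. In $G_{r}^{\operatorname{L}}$ one has $(a,b) = (r+1, r+2)$, while in $G_{r}^{\operatorname{P}}$ one has $(a,b) = (2^{r+1}-1, 2^{r+1})$.

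First I would substitute $r \mapsto 2^{r+1}-2$ into the explicit expression for $G_{r}^{\operatorname{L}}$ from Proposition~\ref{prop:cut-leaves:gf}. Under this substitution the exponents transform as $r+1 \mapsto 2^{r+1}-1$ and $r+2 \mapsto 2^{r+1}$, and every occurrence of these exponents in both the prefactor $\frac{1-u^{r+2}}{(1-u^{r+1})(1+u)}$ and the arguments of $T$ matches the corresponding expression in $G_{r}^{\operatorname{P}}$ from Proposition~\ref{prop:paths:gf} verbatim. Hence the two generating functions agree as functions of $z$, $v_{I}$, $v_{L}$, which is exactly the claim.

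There is no real obstacle here: the entire content of the corollary is the algebraic observation that the two parametrized families of formulas coincide under the index shift $r_{\operatorname{L}} = 2^{r+1}-2$. All the combinatorial and analytic work has already been done in deriving the closed forms in the two preceding propositions, so this step is purely a matching of symbolic expressions. If desired, one could also give a more conceptual remark noting that the exponent pair $(2^{r+1}-1, 2^{r+1})$ arises naturally from the doubling $h_{r} = z^{2^{r+1}-1}/F_{2^{r+2}-1}(-z)$ inherent to the path-reduction operator, whereas leaf reduction advances the Fibonacci index by one at each step; composing $r$ path-reductions therefore covers the same range of Fibonacci indices as $2^{r+1}-2$ leaf-reductions.
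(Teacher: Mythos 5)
Your proposal is correct and is exactly the paper's own argument: the paper offers no separate proof for this corollary, stating only that the identity ``can be seen after comparing'' Proposition~\ref{prop:cut-leaves:gf} with Proposition~\ref{prop:paths:gf}, i.e.\ precisely the index substitution $r\mapsto 2^{r+1}-2$ sending $(r+1,r+2)$ to $(2^{r+1}-1,2^{r+1})$ that you carry out. Nothing further is needed.
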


This connection is now especially important for the analysis of the random variable
$X_{n,r} = X_{n,r}^{\operatorname{P}}$ modeling the number of nodes that are left after
reducing a random tree $\tau$ with $n$ nodes $r$ times by removing all paths. In fact, it
follows that
\[ X_{n,r}^{\operatorname{P}} \overset{d}{=} X_{n,2^{r+1}-2}^{\operatorname{L}},  \]
meaning that the asymptotic analysis of the factorial moments of
$X_{n,r}^{\operatorname{P}}$ as well as the limiting distribution follow directly from the
corresponding results in Section~\ref{sec:cut-leaves:analysis}.

\begin{theorem}\label{thm:cutting-paths-clt}
   Let $r\in\N_{0}$ be fixed and consider $n\to\infty$. Then expectation and variance of the
   random variable $X_{n,r} = X_{n,r}^{\operatorname{P}}$ can be expressed as
  \begin{equation}\label{eq:cut-paths:exp}
    \E X_{n,r} = \frac{n}{2^{r+1}-1} - \frac{(2^{r}-1)(2^{r+1}-3)}{3(2^{r+1}-1)} + O(n^{-1}),
  \end{equation}
  and
  \begin{equation}\label{eq:cut-paths:var}
    \V X_{n,r} = \frac{ 2^{r+1}(2^{r}-1)}{3 (2^{r+1}-1)^{2}} n + O(1).
  \end{equation}

The factorial moments are asymptotically given by
\begin{align*}
  \E X_{n,r}^{\underline{d}}&=\frac{n^{d}}{(2^{r+1}-1)^{d}}\\*
&\quad+ \frac{d}{12 (2^{r+1} - 1)^{d}}(4^{r+1} d - 2^{r+4} d - 3\cdot 4^{r+1} + 9\cdot 2^{r+2} + 6d - 18) \\* &\quad+O(n^{d-3/2}).
\end{align*}

Furthermore, $X_{n,r} = X_{n,r}^{\operatorname{P}}$ is asymptotically normally distributed,
i.e., for $x\in \R$ we have
\begin{equation*}
  \P\Bigg(\frac{X_{n,r}- \mu n}{\sqrt{\sigma^{2} n}\,} \leq x\Bigg) = \frac{1}{\sqrt{2\pi}\,} \int_{-\infty}^{x} e^{-t^{2}/2}~dt + O(n^{-1/2})
\end{equation*}
for $\mu = \frac{1}{2^{r+1} - 1}$ and $\sigma^{2} = \frac{2^{r+1} (2^{r} - 1)}{3
  (2^{r+1} - 1)^{2}}$. All $O$-constants in this theorem depend implicitly on $r$.
\end{theorem}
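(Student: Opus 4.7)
The plan is to deduce everything from the results of Section~\ref{sec:cut-leaves:analysis} via the distributional identity $X_{n,r}^{\operatorname{P}}\overset{d}{=}X_{n,2^{r+1}-2}^{\operatorname{L}}$, which follows directly from Corollary~\ref{cor:paths:same-gf}: since $G_{r}^{\operatorname{P}}(z,v,v)=G_{2^{r+1}-2}^{\operatorname{L}}(z,v,v)$, both sides generate the same joint distribution of tree size and reduced-tree size. Hence every statement in the theorem is a consequence of the corresponding statement in Theorem~\ref{thm:cut-leaves} (resp.\ the associated CLT for leaves) after the substitution $r\mapsto 2^{r+1}-2$.

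For expectation and variance I would simply substitute into \eqref{eq:cut-leaves:exp} and \eqref{eq:cut-leaves:var}. Replacing $r$ by $2^{r+1}-2$ turns $r+1$ into $2^{r+1}-1$, so the leading term $n/(r+1)$ becomes $n/(2^{r+1}-1)$; the correction term $r(r-1)/(6(r+1))$ becomes $(2^{r+1}-2)(2^{r+1}-3)/(6(2^{r+1}-1))$, which factors as $(2^{r}-1)(2^{r+1}-3)/(3(2^{r+1}-1))$, matching \eqref{eq:cut-paths:exp}. Similarly $r(r+2)/(6(r+1)^{2})$ becomes $(2^{r+1}-2)\cdot 2^{r+1}/(6(2^{r+1}-1)^{2}) = 2^{r+1}(2^{r}-1)/(3(2^{r+1}-1)^{2})$, giving \eqref{eq:cut-paths:var}.

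For the factorial moments, I would substitute the same replacement into the formula of Theorem~\ref{thm:cut-leaves}. Writing $R=2^{r+1}-2$, the bracket $dR^{2}-4dR-3R^{2}-6d+6R+6$ must be expanded in $2^{r+1}$; grouping by powers of $2$ yields $4^{r+1}d-2^{r+4}d-3\cdot 4^{r+1}+9\cdot 2^{r+2}+6d-18$, precisely the coefficient claimed. The error term $O(n^{d-3/2})$ transfers unchanged (with a new implicit $r$-dependent constant). The main obstacle here is not conceptual but purely algebraic bookkeeping when expanding $R^{2}=4^{r+1}-2^{r+3}+4$ and collecting terms; I would verify the identity by writing out the computation in terms of $R$ and then re-expressing in terms of $2^{r+1}$.

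Finally, for the central limit theorem, the limiting normality together with the Berry--Esseen-type rate $O(n^{-1/2})$ for $X_{n,2^{r+1}-2}^{\operatorname{L}}$ is already provided by the last statement in Section~\ref{sec:cut-leaves:analysis}. Because $r$ is fixed, the value $2^{r+1}-2$ is a fixed non-negative integer, so the $O$-constant (which in Theorem~\ref{thm:cut-leaves} depends implicitly on the leaf-parameter) becomes an $O$-constant depending implicitly on the path-parameter $r$. The parameters $\mu$ and $\sigma^{2}$ are obtained from the leaf versions by the same substitution $r\mapsto 2^{r+1}-2$ and coincide with the leading coefficients computed for the mean and variance above. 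No separate toll-function argument is needed: the CLT is inherited through the distributional equality.
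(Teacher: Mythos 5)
Your proposal is correct and is exactly the paper's route: the authors derive Theorem~\ref{thm:cutting-paths-clt} by invoking Corollary~\ref{cor:paths:same-gf} to get $X_{n,r}^{\operatorname{P}}\overset{d}{=}X_{n,2^{r+1}-2}^{\operatorname{L}}$ and then substituting $r\mapsto 2^{r+1}-2$ into Theorem~\ref{thm:cut-leaves} and the associated central limit theorem, with no separate argument. Your algebraic verifications of the resulting coefficients (including the grouping $2^{r+1}-2=2(2^{r}-1)$ and the expansion of $R^{2}=4^{r+1}-2^{r+3}+4$) check out.
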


\subsection{Total number of paths}\label{sec:paths:totalnumber}

In the context of this reduction it is interesting to investigate the total number of
paths needed to construct a given tree.
To determine this parameter we can reduce the tree repeatedly
and count the number of leaves. The sum of the number of leaves over all reduction steps
is equal to the number of paths, which follows from the observation that leaves mark the
endpoints of all paths.

Formally, given the random variables $P_{n,r}$ counting the number of leaves in the $r$th
reduction of a tree of size $n$, we want to analyze the random variable $P_{n} \coloneqq
\sum_{r\geq 0} P_{n,r}$.

\begin{proposition}
  The expected number of paths needed to construct a uniformly random tree of size $n$ satisfies
  \begin{equation}
    \label{eq:paths:total-expansion}
    \E P_{n} = \frac{1}{C_{n-1}} [z^{n}] \frac{1-u}{1+u} \sum_{r\geq 1}
    \frac{u^{2^{r}}}{(1 - u^{2^{r}})(1 - u^{2^{r} - 1})},
  \end{equation}
  where $z = u/(1+u)^{2}$.
\end{proposition}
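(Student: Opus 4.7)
The plan is to apply linearity of expectation to the decomposition $P_n = \sum_{r \geq 0} P_{n,r}$ introduced in the paragraph preceding the proposition, and then to evaluate each $\E P_{n,r}$ by combining the distributional identity from Corollary~\ref{cor:paths:same-gf} with the explicit leaf-count formula from Proposition~\ref{prop:cut-leaves:moments}.

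Specifically, $P_{n,r}$ is the number of leaves of the $r$-fold path-pruned tree (set to $0$ on trees that do not survive $r$ pruning rounds, consistent with the convention used throughout the paper). Corollary~\ref{cor:paths:same-gf} provides the distributional identity $P_{n,r} \overset{d}{=} L_{n,2^{r+1}-2}^{\operatorname{L}}$, so each expected value $\E P_{n,r}$ is supplied by the first factorial moment formula~\eqref{eq:moments-fringe}, specialised to $d = 1$ (where $\tilde{N}_0(1/u) = 1/u$, which causes the numerator to lose one factor of $u$) and with the reduction index replaced by $2^{r+1}-2$. This produces an explicit rational expression in $u$ inside a $[z^n]$ for each summand.

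Summing over $r \geq 0$, I would interchange the summation with the coefficient extractor $[z^n]$. This is legitimate because, for any fixed $n$, the $r$-th summand has lowest-order $u$-term of order at least $2^{r+1}-1$, so only finitely many summands can contribute a non-vanishing coefficient. Pulling the common factor $\frac{1-u}{1+u}$ out of the sum and reindexing by $s = r+1$ then brings the resulting expression into the form claimed in the proposition.

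No new combinatorial or analytic ingredient is required beyond what has already been established; the argument is essentially careful bookkeeping built on top of Corollary~\ref{cor:paths:same-gf} and Proposition~\ref{prop:cut-leaves:moments}. The only delicate point is keeping indices and exponents of $u$ aligned across the exponential substitution $r \mapsto 2^{r+1}-2$ and the shift of one power of $u$ coming from the evaluation $\tilde{N}_0(1/u) = 1/u$ in~\eqref{eq:moments-fringe}, so that the exponent of $u$ in the numerator of the summand ends up matching the one displayed in the statement.
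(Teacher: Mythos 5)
Your strategy coincides with the paper's own proof: linearity of expectation applied to $P_n=\sum_{r\ge 0}P_{n,r}$, the identification of $P_{n,r}$ with the leaf count after $2^{r+1}-2$ leaf-reductions via Corollary~\ref{cor:paths:same-gf}, the $d=1$ case of \eqref{eq:moments-fringe}, termwise coefficient extraction, and the index shift. There is no methodological gap, and your justification for interchanging $\sum_r$ with $[z^n]$ is a point the paper leaves implicit.

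The problem lies exactly at the ``delicate point'' you flag, and your own bookkeeping does not land where you claim it does. As you correctly observe, $\tilde N_0(1/u)=1/u$ removes one power of $u$ from $u^{dr+2d}\big|_{d=1}=u^{r+2}$, so \eqref{eq:moments-fringe} with $d=1$ and $r\mapsto 2^{r+1}-2$ gives
\begin{equation*}
  \E P_{n,r}=\frac{1}{C_{n-1}}[z^n]\,\frac{1-u}{1+u}\cdot\frac{u^{2^{r+1}-1}}{(1-u^{2^{r+1}})(1-u^{2^{r+1}-1})},
\end{equation*}
and after summing over $r\ge 0$ and reindexing $s=r+1$ the numerator is $u^{2^{s}-1}$, not the $u^{2^{s}}$ displayed in \eqref{eq:paths:total-expansion}. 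These are genuinely different power series: the difference of the $s$th summands is $-\frac{(1-u)^2u^{2^s-1}}{(1+u)(1-u^{2^s})(1-u^{2^s-1})}$, which for $s\ge 2$ has infinitely many nonzero coefficients. A check at $n=3$ decides which version is true: the two trees of size $3$ are the path (one path segment, $P_3=1$) and the cherry ($2+1=3$ segments), so $\E P_3=2$; the displayed right-hand side evaluates to $\frac12[z^3]\frac{zu}{1-u}=\frac32$, while the $u^{2^s-1}$ version gives $\frac12(3+1)=2$. So the derivation you describe, carried out faithfully, proves the corrected identity; the statement as printed (and the paper's own intermediate formula for $\E P_{n,r}$, which silently drops the factor $1/u$ coming from $\tilde N_0(1/u)$) carries a spurious factor of $u$ in each summand. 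This does not affect the leading and logarithmic terms of Theorem~\ref{thm:paths:total} but does shift its constant term. What you must fix is the assertion that the exponents ``end up matching the one displayed in the statement'' --- they do not, and forcing agreement there would introduce an error into an otherwise correct argument.
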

\begin{proof}
  As a consequence of Proposition~\ref{prop:paths:gf}, the bivariate generating function
  enumerating plane trees where $z$ marks tree size and $v$ marks the number of
  leaves after $r$ path reductions can be written as
  \[ \frac{1 - u^{2^{r+1}}}{(1-u^{2^{r+1} - 1})(1+u)} T\Big(\frac{u (1 - u^{2^{r+1} -
      1})^{2}}{(1 - u^{2^{r+1}})^{2}}, \frac{u^{2^{r+1} - 1}
    (1-u)^{2}}{(1-u^{2^{r+1}})^{2}} v\Big).  \]
  By differentiating this generating function once with respect to $v$ and setting $v = 1$
  afterwards, we obtain an expression where $C_{n-1} \E P_{n,r}$ can be extracted as the
  coefficient of $z^{n}$. By~\eqref{eq:moments-fringe} with $d=1$ and $r$ replaced by
  $2^{r+1}-2$, we have
  \[ \E P_{n,r} = \frac{1}{C_{n-1}} [z^{n}] \frac{1-u}{1+u} \frac{u^{2^{r+1}}}{(1 -
    u^{2^{r+1}}) (1 - u^{2^{r+1} - 1})}.  \]
  Summation over $r\geq 0$ and shifting the index of summation by one completes the
  proof.
\end{proof}

Our strategy for determining an asymptotic expansion for $\E P_{n}$ as given
in~\eqref{eq:paths:total-expansion} is based on the Mellin transform.

\begin{theorem}\label{thm:paths:total}
  For $n\to\infty$, the expected number of paths required to construct a uniformly random
  tree of size $n$ is given by the asymptotic expansion
  \begin{equation}
    \label{eq:paths:total-asymptotic}
    \E P_{n} = (\alpha - 1) n + \frac{1}{6} \log_{4}n - \frac{\gamma +
      4(\alpha-1)\log 2 + \log 2 + 24\zeta'(-1) + 2}{12\log 2} + \delta(\log_{4} n) + O(n^{-1/4}),
  \end{equation}
  where
  \begin{equation}\label{eq:paths:total-fluctuation}
    \delta(x) \coloneqq \frac{1}{\log 2}\sum_{k\in \Z\setminus\{0\}} (-1 + \chi_{k})
    \Gamma(\chi_{k}/2) \zeta(-1 + \chi_{k}) e^{2k\pi i x}
  \end{equation}
  with $\chi_{k} = \frac{2k\pi i}{\log 2}$ is a fluctuation with mean $0$ and $\alpha \coloneqq \sum_{k\geq 1} 1/(2^{k} - 1) \approx
  1.606695$, $\gamma$ is the Euler--Mascheroni constant and $\zeta$ is the Riemann zeta function.
\end{theorem}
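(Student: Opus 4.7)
The plan is to extract coefficients in \eqref{eq:paths:total-expansion} via singularity analysis at $z=1/4$ after first obtaining an asymptotic expansion of the generating function near that singularity by means of the Mellin transform. Setting $u=e^{-t}$, so that $u\to 1^{-}$ corresponds to $t\to 0^{+}$, and recalling $(1-u)/(1+u)=\sqrt{1-4z}\,$, one has $t=2\sqrt{1-4z}+O((1-4z)^{3/2})$. The problem therefore reduces to determining the behavior, as $t\to 0^{+}$, of
\[
g(t)\coloneqq \frac{1-e^{-t}}{1+e^{-t}}\sum_{r\ge 1}\frac{e^{-2^{r}t}}{(1-e^{-2^{r}t})(1-e^{-(2^{r}-1)t})},
\]
and then translating the result back to $z\to 1/4^{-}$ and dividing by $C_{n-1}$.

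I would expand each summand of $g$ as a double geometric series $\sum_{j\ge 1,\,k\ge 0}e^{-(2^{r}j+(2^{r}-1)k)t}$ and interchange sums so that the Mellin transform $G^{*}(s)=\int_{0}^{\infty}g(t)t^{s-1}\,dt$ acquires the shape $\Gamma(s)\,D(s)$, with analytic corrections arising from the prefactor $\tanh(t/2)$. The Dirichlet series $D(s)$ then has to be put in closed form: the crucial step is to isolate the $2^{r}$-dependence, for instance via the partial-fraction decomposition
\[
\frac{u^{2^{r}}}{(1-u^{2^{r}})(1-u^{2^{r}-1})}=\frac{u}{u-1}\Bigl(\frac{1}{1-u^{2^{r}}}-\frac{1}{1-u^{2^{r}-1}}\Bigr),
\]
which after expansion and summation over $r\ge 1$ produces a rational function of $2^{s}$ with a denominator factor of the form $1-2^{-s}$. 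This latter factor is precisely what is responsible for the imaginary poles that supply the fluctuation $\delta$.

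Next I would locate the poles of $G^{*}(s)$ and compute the corresponding residues. A pole at $s=1$ with residue $(\alpha-1)/2$---consistent with the direct estimate $g(t)\sim (\alpha-1)/(2t)$ obtained from $\sum_{r\ge 1}1/(2^{r}(2^{r}-1))=\alpha-1$---yields the leading term $(\alpha-1)n$. A double pole at $s=-1$ contributes $t\log t$ and $t$ corrections whose combination delivers both the $\tfrac{1}{6}\log_{4}n$ term and the constant involving $\gamma$, $\log 2$, and $\zeta'(-1)$; the last of these arises naturally from the Laurent expansions of $\Gamma(s)$ and of the geometric factor $(1-2^{-s})^{-1}$ at $s=-1$. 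Finally, the simple poles at $s=-1+\chi_{k}$ for $k\in\Z\setminus\{0\}$, inherited from the zeros of $1-2^{-s}$ on the line $\Re s=-1$, contribute $t^{1-\chi_{k}}$ terms whose residues, of shape $\Gamma(\chi_{k}/2)\zeta(-1+\chi_{k})/\log 2$, generate $\delta(\log_{4}n)$ as in \eqref{eq:paths:total-fluctuation}.

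With the expansion of $g(t)$ in hand, I would substitute $t=2\sqrt{1-4z}\,(1+(1-4z)/6+\dots)$ to obtain an expansion of the generating function in powers of $(1-4z)^{1/2}$ with occasional logarithms and imaginary exponents, apply standard singularity analysis (\cite[Ch.~VI]{Flajolet-Sedgewick:ta:analy}) to transfer to coefficients, and divide by $C_{n-1}\sim 4^{n-1}/(\sqrt{\pi}\,n^{3/2})$---with the subdominant $3/(8n)$ correction---to arrive at \eqref{eq:paths:total-asymptotic}. The main obstacle I expect is the manipulation of the Dirichlet series in the second step: one has to ensure that $G^{*}(s)$ is meromorphic in a half-plane reaching past the line $\Re s=-1$ and to control the tail when shifting the inversion contour (which in turn determines the $O(n^{-1/4})$ error term), and one must carefully track the residue at the double pole $s=-1$, where contributions from $\Gamma(s)$, $\zeta'(-1)$, and the derivative of $(1-2^{-s})^{-1}$ all intertwine.
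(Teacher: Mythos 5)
Your overall architecture is the same as the paper's: partial-fraction the summand to isolate the $2^{r}$-structure, pass to $u=e^{-t}$, take a Mellin transform of the resulting harmonic sum, shift the contour past $\Re s=-1$, and read off the three groups of contributions (simple pole at $s=1$, double pole at $s=-1$, imaginary poles at $s=-1+\chi_k$) before transferring via singularity analysis. The residue data you quote ($(\alpha-1)/2$ at $s=1$, the $t\log t$ term from the double pole, the $O(n^{-1/4})$ error from the contour at $\Re s=-3/2$) is consistent with the paper. However, two steps are not right as stated. First, after summing over $r$ the Dirichlet factor is \emph{not} a rational function of $2^{s}$: it is $A(s)=\sum_{r\ge1}2^{-rs}\bigl((1-2^{-r})^{-s}-1\bigr)$, and the piece $\sum_r(2^r-1)^{-s}$ only becomes tractable after the binomial expansion $A(s)=\sum_{\ell\ge1}\binom{\ell+s-1}{\ell}\frac{1}{2^{s+\ell}-1}$. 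Consequently the fluctuation poles at $s=-1+\chi_k$ come from the $\ell=1$ factor $s/(2^{s+1}-1)$, not from ``zeros of $1-2^{-s}$ on the line $\Re s=-1$'' --- that function has no zeros off $\Re s=0$, and the would-be poles on $\Re s=0$ cancel in the difference. This matters beyond bookkeeping: to justify the contour shift you must prove that $A(s)$ continues meromorphically past $\Re s=-1$ with polynomial growth on vertical lines, which the paper does by a Taylor-remainder estimate on $A(s)-s/(2^{s+1}-1)$; asserting rationality in $2^{s}$ skips exactly this work. (A smaller slip of the same kind: the Mellin residue carries $\Gamma(\chi_k)$, and $\Gamma(\chi_k/2)$ only appears in $\delta$ after singularity analysis and the duplication formula.)

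Second, Mellin inversion gives the expansion of $f(t)$ only for real $t\to0^{+}$, whereas singularity analysis requires validity in a complex sector around $z=1/4$ (a $\Delta$-domain), and additionally requires uniform control of the error terms when the substitution $t=2\sqrt{1-4z}+O((1-4z)^{3/2})$ is inserted into the infinite sum over $k$ of terms $t^{1-\chi_k}$. The paper devotes a substantial part of the proof to exactly these points (the restriction $\abs{\arg(z-1/4)}>2\pi/5$, the exponential-decay estimate \eqref{eq:totalpaths:mellin-estimate}, and the bound on $\abs{(1+O(1-4z))^{-\kappa/2}-1}$ for $\kappa=-1+\chi_k$). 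Your proposal invokes ``standard singularity analysis'' without addressing either issue; as written this is a genuine gap rather than a routine omission. Keeping the prefactor $\frac{1-u}{1+u}$ outside the Mellin analysis (as the paper does, after pulling out $\frac{u}{1+u}$) rather than folding $\tanh(t/2)$ into the transform would also simplify your second step considerably.
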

\begin{remark}
  The constant $\alpha$ appears in the asymptotic analysis of digital search
  trees (see e.g.~\cite{Kirschenhofer-Prodinger:1988:digital-search-trees}).
\end{remark}
\begin{proof}
  In order to obtain an asymptotic expansion from~\eqref{eq:paths:total-expansion}, we
  rewrite
  \[ P(z) = \frac{1-u}{1+u} \sum_{r\geq 1} \frac{u^{2^{r}}}{(1-u^{2^{r}})(1-u^{2^{r}-1})} =
    \frac{u}{1+u}\sum_{r\geq 1} \Big(\frac{u^{2^{r}-1}}{1 - u^{2^{r}-1}} - \frac{u^{2^{r}}}{1 -
      u^{2^{r}}}\Big)  \]
  where $z = u/(1+u)^{2}$.
  The main task to obtain an asymptotic expansion of $P(z)$ is to provide a precise analysis of
  this sum, which we carry out via the Mellin transform. We consider the function
  \[ f(t) \coloneqq \sum_{r\geq 1} \frac{e^{-(2^{r}-1) t}}{1 - e^{-(2^{r}-1) t}} - \sum_{r\geq 1} \frac{e^{-2^{r}t}}{1 -
      e^{-2^{r}t}}, \]
  obtained from substituting $u = e^{-t}$ in the sum above. With
  \[ A(s) \coloneqq \sum_{r\geq 1}\frac{1}{2^{r s}}((1 - 2^{-r})^{-s} - 1)  = \sum_{\ell\geq 1} \binom{\ell + s - 1}{\ell} \frac{1}{2^{s + \ell} - 1} \]
  we find that the corresponding Mellin transform of this difference of harmonic sums is
  given by
  \[ f^{*}(s) = \Gamma(s) \zeta(s) A(s) \]
  with fundamental strip $\langle 1,\infty \rangle$. In order for the inversion formula to
  be valid, we need to show that $f^{*}(s)$ decays sufficiently fast along vertical lines
  in the complex plane. While $\Gamma(s)$ and $\zeta(s)$ are well-known to decay
  exponentially and grow polynomially along vertical lines, respectively, the Dirichlet
  series $A(s)$ has to be investigated in more detail.

  We want to estimate the summands in
  \[ A(s) - \frac{s}{2^{s+1} - 1} = \sum_{r\geq 1}\frac{1}{2^{rs}}\Big((1-2^{-r})^{-s} - 1 -
    \frac{s}{2^{r}}\Big).  \]
  To do so, we consider $g(x) = (1-x)^{-s}$ as a function of a real variable. By means of
  the integral form of the Taylor approximation error we find
  \begin{align*}
    \abs{g(2^{-r}) - g(0) - g'(0)\cdot 2^{-r}}
    & = \abs[\Big]{\int_{0}^{2^{-r}} s(s+1) (1-t)^{-s-2} (2^{-r}-t)~dt}\\
    & \leq \abs{s} \abs{s+1} 2^{-r} \int_{0}^{2^{-r}} \abs{1-t}^{-\Re s - 2}~dt \\
    & \leq \abs{s} \abs{s+1} 2^{-2r} (1-2^{-r})^{-\Re s - 2},
  \end{align*}
  where the last inequality is valid under the assumption that $\Re s > -2$. Using this
  estimate, we find
  \begin{align*}
    \abs{A(s)} & \leq \abs[\Big]{A(s) - \frac{s}{2^{s+1} - 1}} +
                 \abs[\Big]{\frac{s}{2^{s+1} - 1}}\\
               & \leq \abs[\Big]{\frac{s}{2^{s+1} - 1}} + \abs{s}\abs{s+1} \sum_{r\geq 1}
                 \frac{1}{(2^{r} - 1)^{\Re s + 2}},
  \end{align*}
  where the sum converges for $\Re s > -2$. Therefore, $A(s)$ has polynomial growth in $\Im s$ for
  $\Re s > -2$ and $\Im s = \frac{2\pi i}{\log 2} \big(k + \frac{1}{2}\big)$, where
  $k\in\Z$ and $\abs{k} \to \infty$, as well as on vertical lines with $\Re s>-2$ and $\Re s\neq -1$. This implies that $f^{*}(s)$ decays sufficiently
  fast, and thus the inversion formula states 
  \begin{equation}\label{eq:totalpaths:inversemellin}
    f(t) = \frac{1}{2\pi i} \int_{2-i\infty}^{2+i\infty} \Gamma(s)\zeta(s)A(s)
    t^{-s}~ds,
  \end{equation}
  which is valid for real, positive $t \to 0$ (and thus $u\to 1^{-}$ and $z \to
  (1/4)^{-}$, as we have $z = u/(1+u)^{2}$ and $u = e^{-t}$). In order to extract the
  coefficient growth (in terms of $z$) with the help of singularity analysis, we require
  analyticity in a larger region (cf.~\cite{Flajolet-Odlyzko:1990:singul}), e.g. in a
  complex punctured neighborhood of $1/4$ with\footnote{Note that the bound $2\pi/5$ is
    somewhat arbitrary: the argument just needs to be less than $\pi/2$.} $\abs{\arg(z - 1/4)} >
  2\pi/5$.

  Substituting back $t$ for $z$, we find
  \[ t = -\log\Big(\frac{1 - \sqrt{1 - 4z}\,}{2z} - 1\Big) = 2\sqrt{1 - 4z}\, + \frac{2}{3} (1
    - 4z)^{3/2} + O((1-4z)^{5/2}), \]
  which implies
  \[ \abs{\arg t} = \frac{1}{2} \abs{\arg(1 - 4z)} + o(1)  \]
  such that we have the bound $\abs{\arg t} < 2\pi/5$ for $t \to 0$, given that the
  restriction on the argument in terms of $z$ is satisfied.

  With the help of our estimates on $f^{*}(s)$ that we discussed above, we find that
  \begin{equation}\label{eq:totalpaths:mellin-estimate}
  \abs{f^{*}(s)t^{-s}} = O\Big(\abs{\Im(t)}^{4} \abs{t}^{-\Re(s)}
    \exp\Big(-\frac{\pi}{10} \abs{\Im(s)}\Big)\Big)
  \end{equation}
  for $-3/2 \leq \Re s \leq 2$ and $\Im s = \frac{2\pi i}{\log 2} \big(k +
  \frac{1}{2}\big)$, where $k\in\Z$ and $\abs{k} \to \infty$. This is a consequence of
  combining the quantified growth of $\Gamma(s)$ (see~\DLMF{5.11}{3})
  and the growth of $\zeta(s)$ (see~\cite[13.51]{Whittaker-Watson:1996}) with the facts
  that $A(s)$ is of order $O(\Im(s)^{2})$ and $\frac{s}{2^{s+1}-1}$ is of order
  $O(\Im(s))$ for $s$ taking values in the specified region.
  
  We can evaluate~\eqref{eq:totalpaths:inversemellin} by shifting the line of integration
  from $\Re(s) = 2$ to $\Re(s) = -3/2$ and collecting the residues of the poles we
  cross. This yields
  \[ f(t) =  \sum_{p\in P} \Res_{s=p}(f^{*}(s) t^{-s}) + \frac{1}{2\pi i}
    \int_{-3/2-i\infty}^{-3/2 + i\infty} f^{*}(s)t^{-s}~ds,  \]
  where $P = \{-1,1\} \cup \{-1 + \chi_{k}\mid k\in \Z\setminus \{0\}\}$.
  For the error term we use the estimate above and find
  \[ \frac{1}{2\pi i} \int_{-3/2 - i\infty}^{-3/2 + i\infty} f^{*}(s)t^{-s}~ds =
    O(\abs{t}^{3/2}). \]

  Evaluating the residues yields
  \begin{multline*}
    f(t) = A(1) t^{-1} + \frac{1}{12\log 2} t\log t + \frac{\log 2 + 2\gamma + 24\zeta'(-1)}{24\log 2} t \\ + \sum_{k\in \Z\setminus\{0\}}
    \frac{1}{\log 2} \Gamma(\chi_{k}) \zeta(-1 + \chi_{k}) t^{1-\chi_{k}}
    + O(\abs{t}^{3/2}).
  \end{multline*}
  Note that with $\alpha \coloneqq \sum_{k\geq 1} 1/(2^{k} - 1)$, we have $A(1) = \alpha - 1$.

  When substituting back in order to obtain an expansion in terms of $z\to 1/4$, we have
  to carefully check that the error terms within the sum of the residues at $\chi_{k}$ for
  $k\in \Z\setminus \{0\}$ can still be controlled. Considering that for some exponent
  $\kappa$, we have the expansion
  \[ t^{-\kappa} = (1-4z)^{-\kappa/2} (1 + O(1-4z))^{-\kappa/2},  \]
  and thus
  \begin{align*}
  \abs{(1+O(1-4z))^{-\kappa/2} - 1} &= \abs[\Big]{\exp\Big(-\frac{\kappa}{2}
                                        \log(1+O(1-4z))\Big) - 1}\\
  & \leq
  \abs[\Big]{\frac{\kappa}{2}} \abs{\log(1+O(1-4z))} \exp\Big(\abs[\Big]{\frac{\kappa}{2}}
  \abs{\log(1+O(1-4z))}\Big)\\
  & = \abs[\Big]{\frac{\kappa}{2}} O(1-4z) \exp\Big(\abs[\Big]{\frac{\kappa}{2}} O(1-4z)\Big).
  \end{align*}
  Setting $\kappa = -1+\chi_{k}$ shows that the errors that we sum are of order $O(\abs{k}(1-4z)\exp(\abs{k} O(1-4z)))$.
  Choosing $z$ sufficiently close to $1/4$ ensures that the
  exponential growth is  negligible compared to the exponential decay proved
  in~\eqref{eq:totalpaths:mellin-estimate}.

  Finally, it is easy to see that the factor $\frac{u}{1+u}$ can be rewritten as $\frac{1 - \sqrt{1
      - 4z}\,}{2}$. Multiplying our expansion of $f(t)$ with this factor and substituting back
  yields the expansion
  \begin{multline*}
    P(z) = \frac{\alpha - 1}{4} (1-4z)^{-1/2} - \frac{\alpha - 1}{4} - \frac{1}{24\log
      2} (1-4z)^{1/2} \log(1-4z) \\ + \frac{2\gamma - 2\alpha\log 2 + 5\log 2 +
      24\zeta'(1)}{24\log 2} (1-4z)^{1/2} \\+ \frac{1}{\log 2} \sum_{k\in\Z\setminus\{0\}}
    \Gamma(\chi_{k}) \zeta(-1+\chi_{k}) (1-4z)^{(1 - \chi_{k})/2}+ O((1-4z)^{3/4}).
  \end{multline*}
  Applying singularity analysis, normalizing the result by $C_{n-1}$, and rewriting the
  coefficients of the contributions from the poles at $-1 + \chi_{k}$ via the duplication
  formula for the Gamma function (cf.~\DLMF{5.5}{5}) then proves the asymptotic expansion for
  $\E P_{n}$.
\end{proof}

\section{Cutting Old Leaves}
\label{sec:cutting-old-leaves}

\subsection{Preliminaries}\label{sec:old-leaves:preliminaries}
In this section we consider a slightly more complex reduction: instead of removing all
leaves, we just remove all leftmost
leaves. Following~\cite{Chen-Deutsch-Elizalde:2006:old-leaves}, we call a leaf that is a
leftmost child an \emph{old leaf}.

In order to describe the corresponding expansion in the language of generating functions,
we need to change our underlying combinatorial model of trees in a way that specifically
marks old leaves.

Let $\mathcal{L}$ be the combinatorial class of plane trees where $\blacksquare$
marks old leaves and $\innernode$ marks all
nodes that are neither old leaves nor parents thereof. Now, as a first step we determine
the bivariate generating function $L(z,w)$ of $\mathcal{L}$.

\begin{proposition}\label{proposition:gf-L-explicit}
  The generating function $L(z,w)$ enumerating plane trees
  with respect to old leaves $\blacksquare$
  (marked by the variable $w$) and all nodes $\innernode$ that are neither old leaves nor parents
  thereof (marked by $z$) is given by
  \begin{equation}
    \label{eq:gf-old-leaves}
    L(z,w)=\frac{1 - \sqrt{1 - 4z - 4w + 4z^{2}}\,}{2}.
  \end{equation}
  For $n\geq 2$ there are $C_{k-1}\binom{n-2}{n-2k} 2^{n-2k}$ plane trees of
  size $n$ (meaning $n$ nodes overall) with $k$ old leaves.
\end{proposition}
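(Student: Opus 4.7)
The plan is to derive the generating function by a symbolic decomposition of $\mathcal{L}$, then read off the enumeration by expanding the resulting square root. The essential subtlety is that whether a leaf is ``old'' depends on whether it is the leftmost child of its parent, so within a recursive decomposition the leftmost subtree of the root must be treated differently from the other subtrees.

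To handle this, I would introduce an auxiliary generating function $M(z,w)$ that counts plane trees under the same marking scheme as $L(z,w)$, except that the root is regarded as an old leaf (contributing $w$) whenever it is a leaf, rather than contributing $z$. Since the two schemes agree on every multi-node tree, $M(z,w) = L(z,w) + w - z$.

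A tree in $\mathcal{L}$ then decomposes as follows: it is either a single node (contributing $z$), or its root has a non-empty sequence of children in which the leftmost subtree is counted by $M$ and each of the remaining subtrees by $L$. The root's own marker is $z$ precisely when the leftmost subtree has size at least two, and is absent when the leftmost subtree is a single old leaf (size one). Separating these two subcases and substituting $M - w = L - z$ gives
\begin{equation*}
L = z + \frac{w + z(M - w)}{1 - L} = z + \frac{w + zL - z^{2}}{1 - L},
\end{equation*}
which clears to the quadratic $L^{2} - L + z + w - z^{2} = 0$; selecting the branch that vanishes at the origin produces~\eqref{eq:gf-old-leaves}.

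For the explicit count, I would write $1 - 4z - 4w + 4z^{2} = (1-2z)^{2}(1 - y)$ with $y = 4w/(1-2z)^{2}$ and apply the standard Catalan expansion $\sqrt{1-y} = 1 - \sum_{k\geq 1} (2 C_{k-1}/4^{k}) y^{k}$ to obtain
\begin{equation*}
L(z,w) = z + \sum_{k\geq 1} \frac{C_{k-1}\, w^{k}}{(1-2z)^{2k-1}}.
\end{equation*}
Because each old leaf has a unique parent and no parent can contribute two old leaves (only its leftmost child can be old), a tree of size $n$ with $k$ old leaves has exactly $k$ parents of old leaves and therefore $n-2k$ nodes marked by $z$. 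Extracting $[z^{n-2k}]$ from $(1-2z)^{-(2k-1)}$ via the generalized binomial series then yields the claimed formula $C_{k-1}\binom{n-2}{n-2k}2^{n-2k}$. The only non-routine step in the whole argument is the recursive set-up; everything else reduces to elementary manipulations.
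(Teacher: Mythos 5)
Your proof is correct and follows essentially the same route as the paper: the auxiliary series $M=L+w-z$ is just a repackaging of the paper's case split on whether the root's leftmost subtree is a single node, and it leads to the identical functional equation $L=z+\frac{w+z(L-z)}{1-L}$, the same quadratic, and the same expansion $L=z+\sum_{k\ge1}C_{k-1}w^k(1-2z)^{-(2k-1)}$ with the same $n-2k$ bookkeeping for the nodes marked by $z$. No gaps.
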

For example in Figure~\ref{fig:old-leaves:illustration}, the original tree corresponds to $z^{3}w^{3}$
because it has three old leaves (dashed nodes) and three nodes which are neither old
leaves nor parents of old leaves.
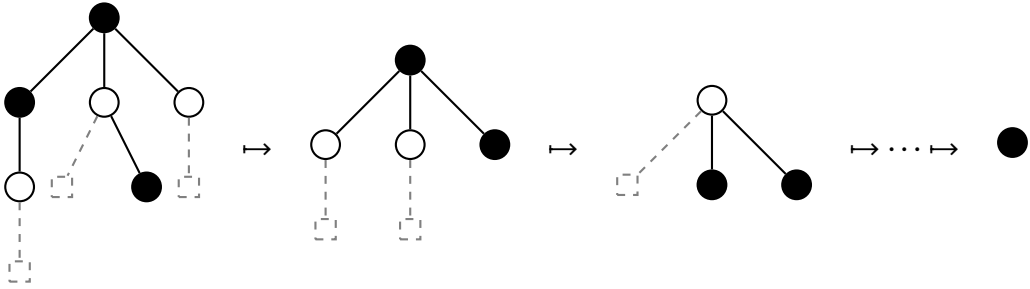
\begin{figure}[ht]
  \centering
        \[
    \begin{tikzpicture}[thick, scale=0.75, baseline={([yshift=-0.5em]current bounding
      box.center)}]
      \node[draw, circle, fill] {}
      child[] {node[draw, circle, fill] {} child {node[draw, circle] {} child[gray, dashed]{node[draw,
            rectangle] {}}}}
      child {node[draw, circle] {}
        child[gray, dashed] {node[draw, rectangle] {}}
        child[] {node[draw, circle, fill] {}}
      }
      child[] {node[draw, circle] {} child[gray, dashed] {node[draw, rectangle] {}}};
    \end{tikzpicture}
    \quad \mapsto \quad
    \begin{tikzpicture}[thick, scale=0.75, baseline={([yshift=-0.5em]current bounding
      box.center)}]
      \node[draw, circle, fill] {}
      child[] {node[draw, circle] {} child[gray, dashed] {node[draw, rectangle] {} }}
      child {node[draw, circle] {}
        child[gray, dashed] {node[draw, rectangle] {}}
      }
      child[] {node[draw, circle, fill] {} };
    \end{tikzpicture}
        \quad \mapsto \quad
    \begin{tikzpicture}[thick, scale=0.75, baseline={([yshift=-0.5em]current bounding
      box.center)}]
      \node[draw, circle] {}
      child[gray, dashed] {node[draw, rectangle] {}}
      child {node[draw, circle, fill] {}}
      child {node[draw, circle, fill] {}
      };
    \end{tikzpicture}
            \quad \mapsto \dots \mapsto \quad
    \begin{tikzpicture}[thick, scale=0.75, baseline={([yshift=-0.5em]current bounding
      box.center)}]
      \node[draw, circle, fill] {
      };
    \end{tikzpicture}
    \]
  \caption{Illustration of the ``cutting old leaves''-operator $\rho$}
  \label{fig:old-leaves:illustration}
\end{figure}

\begin{proof}
  We consider the symbolic equation describing the combinatorial class
   $\mathcal{L}$ of plane trees with respect to old leaves, which is illustrated
  in Figure~\ref{fig:old-leaves:symbolic}.
  \begin{figure}[ht]
    \centering
    \begin{tikzpicture}
      \node (add) {$\mathcal{L}\quad=\quad\tikz{\node[draw, circle, fill, inner sep=2.5pt] {};}\quad+\quad\sum\limits_{n\geq0}$};
      \node[right of=add, circle, fill, outer sep=0pt, inner sep=0pt,  xshift=7em, yshift=2em] (right-V) {};
      \node[below of=right-V, yshift=-1.5em, xshift=-4em, draw, fill] (1) {};
      \node[below of=right-V, yshift=-1.5em, xshift=-2em] (2) {$\mathcal{L}$};
      \node[below of=right-V, yshift=-1.5em, xshift=0em]  (3) {$\mathcal{L}$};
      \node[below of=right-V, yshift=-1.5em, xshift=2em, gray] (4) {$\cdots$};
      \node[below of=right-V, yshift=-1.5em, xshift=4em] (5) {$\mathcal{L}$};
      \draw (right-V) -- (1) (right-V) -- (2) (right-V) -- (3) (right-V) -- (5);
      \draw[dotted, gray] (right-V) -- (4);
      \draw [thick, decoration={brace, mirror, raise=1em},
             decorate] (2.center) to node (h) {} (5.center);
      \node [below of=h] {$n$};
      \node[right of=add, xshift=13.5em] {$+\quad \sum\limits_{n\geq0}$};
      \node[right of=right-V, circle, fill=black, inner sep=3pt, xshift=11.5em] (right-V2) {};
      \node[below of=right-V2, yshift=-1.5em, xshift=-5em] (12) {$\mathcal{L} - \tikz{\node[draw, circle, fill, inner sep=2.5pt] {};}$};
      \node[below of=right-V2, yshift=-1.5em, xshift=-2em] (22) {$\mathcal{L}$};
      \node[below of=right-V2, yshift=-1.5em, xshift=0em]  (32) {$\mathcal{L}$};
      \node[below of=right-V2, yshift=-1.5em, xshift=2em, gray] (42) {$\cdots$};
      \node[below of=right-V2, yshift=-1.5em, xshift=4em] (52) {$\mathcal{L}$};
      \draw (right-V2) -- (12) (right-V2) -- (22) (right-V2) -- (32) (right-V2) -- (52);
      \draw[dotted, gray] (right-V2) -- (42);
            \draw [thick, decoration={brace, mirror, raise=1em},
             decorate] (22.center) to node (h2) {} (52.center);
      \node [below of=h2] {$n$};
    \end{tikzpicture}
    \caption{Symbolic equation for plane trees w.r.t.\ old
      leaves}
    \label{fig:old-leaves:symbolic}
  \end{figure}
  The functional equation that can be derived from the symbolic equation by marking
  $\blacksquare$ with $w$ and $\innernode$ with $z$ is
  \begin{equation}\label{eq:gf-old-leaves:func-eq} 
    L(z,w) = z + \frac{w + z (L(z,w) - z)}{1 - L(z,w)}.
  \end{equation}
  Solving this equation and choosing the correct branch of the root
  yields~\eqref{eq:gf-old-leaves}.
 
    To extract coefficients of $L(z, w)$, we rewrite it as
    \begin{align}\label{eq:L-sqrt-z}
      L(z, w) &= \frac12\biggl(1- (1-2z)\sqrt{1-\frac{4w}{(1-2z)^2}}\,\biggr)
      = \frac12\biggl(1-\sum_{k\ge 0} \binom{1/2}{k}\frac{(-1)^k4^kw^k}{(1-2z)^{2k-1}}\biggr)\\
      &= z + \sum_{k\ge 1}C_{k-1}\frac{w^k}{(1-2z)^{2k-1}}
      = z + \sum_{\substack{k\ge 1\\n\ge 0}}C_{k-1}\binom{n+2k-2}{n}2^n w^k
      z^{n}.\label{eq:L-full-expansion}
    \end{align}
\end{proof}

As we will see in the next section, the polynomials defined below will play a similar role
for the ``old leaves''-reduction as the Fibonacci polynomials played for the ``leaves''-
and ``paths''-reduction.
\begin{definition}
  The polynomials $B_{r}(z)$ are the generating functions of binary trees w.r.t.\ the
  number of internal nodes of height $\leq r$ satisfying
\begin{equation}\label{eq:binary-height-poly}
  B_{r}(z)=1+zB_{r-1}(z)^{2}
\end{equation}
for $r\geq 1$ and $B_{0}(z)=1$.
\end{definition}

\subsection{The Expansion Operator and Asymptotic Results}\label{sec:old-leaves:expansion}
As described in the previous section, we now concentrate on the reduction $\rho\colon
\mathcal{L} \to \mathcal{L}$, which removes all old leaves from a
tree. Note that $\rho(\innernode) = \innernode$, as
the root itself is not an old leaf.
We begin our analysis of this reduction by determining the expansion operator $\Phi$.

\begin{proposition}\label{prop:old-leaves:expansion}
  Let $\mathcal{F}\subseteq \mathcal{L}$ be a family of plane trees with bivariate
  generating function $f(z,w)$, where $z$ marks nodes that are neither old leaves
  nor parents thereof and $w$ marks old leaves. Then the generating function
  for $\rho^{-1}(\mathcal{F})$, the family of trees whose reduction is in $\mathcal{F}$,
  is given by
  \begin{equation}\label{eq:old-leaves:expansion}
    \Phi(f(z,w)) = f(z+w, (2z+w)w).
  \end{equation}
\end{proposition}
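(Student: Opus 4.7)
My plan is to verify the formula monomially: since $f\mapsto f(z+w,(2z+w)w)$ is linear, it suffices to show $\Phi(z^nw^k)=(z+w)^n((2z+w)w)^k$ for a single monomial. Such a monomial represents a fixed reduced tree $\tau\in\mathcal{L}$ with $n$ nodes of type $\innernode$, $k$ old leaves $\blacksquare$, and $k$ additional nodes carrying no marker (each the unique parent of an old leaf in $\tau$, contributing the factor $1$ to the generating function). I will parameterize pre-images $\tau'\in\rho^{-1}(\{\tau\})$ by Boolean labelings $\{\varepsilon_v\}_{v\in\tau}$: $\varepsilon_v=1$ means that in $\tau'$ the node corresponding to $v$ has an extra leftmost leaf attached (which is then an old leaf of $\tau'$ to be erased by $\rho$), while $\varepsilon_v=0$ means it has the same children as $v$ has in $\tau$.

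The decisive step is an \emph{admissibility} constraint on $\{\varepsilon_v\}$: for $\rho(\tau')=\tau$ to hold, no node of $\tau$ may itself become an old leaf of $\tau'$, else $\rho$ would erase it, contradicting $v\in\tau$. Unwinding the definitions, $v\in\tau$ becomes an old leaf of $\tau'$ exactly when $v$ is a leaf of $\tau$, $\varepsilon_v=0$, $v$ is the leftmost child of its parent $p=\pi(v)$ in $\tau$, and $\varepsilon_p=0$; the first two of these conditions together are equivalent to $v$ being a $w$-node. So a labeling is admissible iff for every $w$-node $v$ with parent $p$, at least one of $\varepsilon_v,\varepsilon_p$ equals $1$.

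Next, I would show that the contribution of each admissible pre-image to $\Phi(z^nw^k)$ factorizes over (i) the $z$-nodes of $\tau$ and (ii) the (old leaf, parent) pairs. The marker of $v'$ in $\tau'$ depends only on $\varepsilon_v$, on $\varepsilon_{\pi(v)}$, and on $\varepsilon_c$ for $c$ the leftmost child of $v$ in $\tau$. A short case analysis shows that an isolated $z$-node contributes $z+w$, namely $z$ when $\varepsilon_v=0$, and $1\cdot w=w$ when $\varepsilon_v=1$ (with $v'$ becoming an unmarked parent of the freshly attached old leaf); here one uses that a $z$-node has no $w$-child, decoupling it from any pair. For a single $(w,p)$ pair the three admissible assignments $(\varepsilon_v,\varepsilon_p)\in\{(0,1),(1,0),(1,1)\}$ contribute $zw+zw+w^2=(2z+w)w$. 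Multiplying the $n$ factors $(z+w)$ and the $k$ factors $(2z+w)w$ yields the claim.

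The main obstacle is correctly identifying and enforcing the admissibility constraint. A naive enumeration allowing all $2^{|\tau|}$ labelings would miscount $\tau$ itself as a pre-image of $\tau$ whenever $\tau$ already has an old leaf, adding a spurious $+w$ per pair and contradicting the formula. A sanity check on the smallest nontrivial case $\tau=(\text{root},\text{old leaf})$, for which the monomial is $w$ and the formula predicts $(2z+w)w=2zw+w^2$, pinpoints immediately which labelings must be excluded; after that, all remaining case analysis is short and entirely local.
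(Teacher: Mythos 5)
Your proof is correct and follows essentially the same route as the paper: the three admissible assignments for an (old leaf, parent) pair are exactly the paper's three expansion options yielding $2zw+w^2$, and the two choices at a $z$-node give $z+w$. Your Boolean-labeling formalism with the admissibility constraint makes explicit why the $(0,0)$ case must be excluded and why the count factorizes locally, which the paper leaves implicit (one small slip: it is the first and third of your four listed conditions, not the first two, that together characterize $w$-nodes).
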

\begin{proof}
  Linearity of $\Phi$ is obvious from the combinatorial interpretation, meaning that we
  can focus on the expansion of any tree represented by $z^{n}w^{k}$, i.e.\ a tree with $n$
  nodes that are neither old leaves nor parents thereof and $k$ old leaves.

  Figure~\ref{fig:old-leaves:w-expansion} illustrates all three possibilities to expand an old leaf $\blacksquare$:
  \begin{itemize}
  \item[--] appending an old leaf to the parent of $\blacksquare$, which turns the original
    old leaf into $\innernode$,
  \item[--] appending an old leaf to $\blacksquare$ itself, which turns the parent into
    $\innernode$,
  \item[--] appending old leaves both to $\blacksquare$ and its parent.
  \end{itemize}
  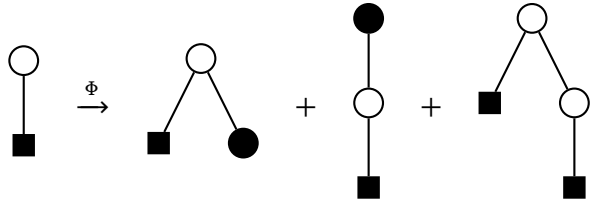
\begin{figure}[ht]
    \centering
    \[
      \begin{tikzpicture}[thick, scale=0.75, baseline={([yshift=-0.5em]current bounding
          box.center)}]
        \node[draw, circle] {} child {node[draw, rectangle, fill] {}};
      \end{tikzpicture}
      \quad \xrightarrow{\Phi} \quad
      \begin{tikzpicture}[thick, scale=0.75, baseline={([yshift=-0.5em]current bounding
          box.center)}]
        \node[draw, circle] {}
          child {node[draw, rectangle, fill] {}}
          child {node[draw, circle, fill] {}};
      \end{tikzpicture}
      \quad + \quad
      \begin{tikzpicture}[thick, scale=0.75, baseline={([yshift=-0.5em]current bounding
          box.center)}]
        \node[draw, circle, fill] {} child {node[draw, circle] {} child {node[draw,
            rectangle, fill] {}}};
      \end{tikzpicture}
      \quad + \quad
      \begin{tikzpicture}[thick, scale=0.75, baseline={([yshift=-0.5em]current bounding
          box.center)}]
        \node[draw, circle] {}
          child {node[draw, rectangle, fill] {}}
          child {node[draw, circle] {} child {node[draw, rectangle, fill] {}}};
      \end{tikzpicture}
    \]
    \caption{All possible expansions of an old leaf}
    \label{fig:old-leaves:w-expansion}
  \end{figure}
  In terms of generating functions, this means that $w$ is substituted by $2zw + w^{2}$.

  Furthermore, the nodes represented by $\innernode$ can optionally be expanded by attaching an old
  leaf to them, otherwise they stay as they are. This option corresponds to the substitution $z \mapsto z+w$.

  There are no more operations to expand the tree, so putting everything together yields
  \[ \Phi(z^{n}w^{k}) = (z+w)^{n} (2zw + w^{2})^{k},  \]
  which proves the statement.
\end{proof}
An immediate consequence of the fact that $\rho\colon \mathcal{L} \to \mathcal{L}$ is
surjective is the following corollary.
\begin{corollary}\label{cor:phi:identity}
  The generating function for plane trees $L(z,w)$ satisfies the functional
  equation
  \[ \Phi(L(z,w)) = L(z,w).  \]
\end{corollary}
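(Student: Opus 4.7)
The plan is to treat this corollary as a direct translation of the combinatorial meaning of $\Phi$ established in Proposition~\ref{prop:old-leaves:expansion}. First I would unpack the left-hand side: by construction, $\Phi(L(z,w))$ is the bivariate generating function enumerating $\rho^{-1}(\mathcal{L})$, the family of trees in $\mathcal{L}$ whose image under $\rho$ lies in $\mathcal{L}$. The crucial point, stressed just before the statement, is that $\rho$ is declared as a total function $\rho\colon\mathcal{L}\to\mathcal{L}$; in particular, the convention $\rho(\innernode)=\innernode$ ensures that even the trivial tree belongs to the domain. Consequently $\rho^{-1}(\mathcal{L})=\mathcal{L}$ as combinatorial classes, and since $\rho$ is single-valued each tree is counted exactly once. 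Translating back to generating functions yields $\Phi(L(z,w))=L(z,w)$ immediately.

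As an independent consistency check I would verify the identity algebraically, which is reassuringly short: substituting $z\mapsto z+w$ and $w\mapsto(2z+w)w$ into the radicand of the explicit formula~\eqref{eq:gf-old-leaves} gives
\[ 1 - 4(z+w) - 4(2z+w)w + 4(z+w)^{2} = 1 - 4z - 4w - 8zw - 4w^{2} + 4z^{2} + 8zw + 4w^{2} = 1 - 4z - 4w + 4z^{2}, \]
so the discriminant is invariant. The negative branch of the square root is preserved because both sides agree at $z=w=0$, and the identity $L(z+w,(2z+w)w)=L(z,w)$ follows.

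No real obstacle is expected; the statement is essentially tautological once one parses the definition of $\Phi$ correctly. The only point requiring a moment of care is observing that surjectivity of $\rho$ is slightly stronger than what is strictly needed here: for the identity $\Phi(L)=L$ it suffices that $\rho$ is \emph{defined} on all of $\mathcal{L}$ and takes values in $\mathcal{L}$, which is exactly the declared typing of $\rho$. I would keep the combinatorial argument as the main proof and mention the algebraic check as a one-line sanity verification.
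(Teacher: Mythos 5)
Your combinatorial argument is exactly the paper's justification: the corollary is stated as an immediate consequence of $\rho\colon\mathcal{L}\to\mathcal{L}$ being defined on all of $\mathcal{L}$ (the paper phrases this as surjectivity), so that $\rho^{-1}(\mathcal{L})=\mathcal{L}$ and Proposition~\ref{prop:old-leaves:expansion} gives $\Phi(L)=L$; your remark that totality rather than surjectivity is the operative property is a fair reading. The algebraic check that the radicand $1-4z-4w+4z^{2}$ is invariant under $z\mapsto z+w$, $w\mapsto(2z+w)w$ is correct and is a worthwhile addition not present in the paper.
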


We now focus on determining the generating function measuring the change in the tree size
after repeatedly applying the reduction $\rho$.

\begin{proposition}\label{prop:old-leaves:gf}
  Let $r\in\N_{0}$. The bivariate generating function $G_{r}(z,v) =
  G_{r}^{\operatorname{OL}}(z,v)$ enumerating plane trees, where $z$ marks the tree
  size and $v$ marks the size of the $r$-fold cut tree, is given by
  \[ G_{r}(z,v) = \Phi^{r}(L(zv, wv^{2}))|_{w=z^{2}} = L(zB_{r}(z)v, z(B_{r+1}(z) -
    B_{r}(z))v^{2}), \]
  where the $B_{r}(z)$ are the polynomials enumerating binary trees of height $\leq r$
  w.r.t.\ the number of internal nodes.
\end{proposition}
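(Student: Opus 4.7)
The proof plan is to exploit the fact that $\Phi$ acts by substitution on the arguments of a bivariate power series. From $\Phi(z^n w^k) = (z+w)^n(2zw+w^2)^k$ in Proposition~\ref{prop:old-leaves:expansion} and linearity, one immediately obtains $\Phi(f)(z,w)=f(z+w,\,2zw+w^2)$ for any $f(z,w)$. Consequently, iterating $\Phi$ corresponds to iterating the substitution $\Psi\colon(z,w)\mapsto(z+w,\,2zw+w^2)$: defining sequences $(z_r,w_r)$ by $z_0=z$, $w_0=w$ and $z_{r+1}=z_r+w_r$, $w_{r+1}=2z_rw_r+w_r^2$, a trivial induction gives $\Phi^r(f)(z,w)=f(z_r,w_r)$. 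Applied to $f(z,w)=L(zv,\,wv^2)$, this yields $\Phi^r(L(zv,wv^2))=L(z_r\,v,\,w_r\,v^2)$, so after specializing $w=z^2$ the proposition reduces to the two polynomial identities
\begin{equation*}
z_r(z,z^2)=z\,B_r(z),\qquad w_r(z,z^2)=z\,(B_{r+1}(z)-B_r(z)).
\end{equation*}

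These I would prove by simultaneous induction on $r$. The base case $r=0$ is immediate since $B_0(z)=1$ and $B_1(z)=1+z$, so $zB_0=z=z_0$ and $z(B_1-B_0)=z^2=w_0|_{w=z^2}$. For the induction step, assuming the two identities at level $r$, one computes
\begin{equation*}
z_{r+1}=z_r+w_r=zB_r+z(B_{r+1}-B_r)=zB_{r+1},
\end{equation*}
which settles the first identity. For the second, factor
\begin{equation*}
w_{r+1}=w_r(2z_r+w_r)=w_r(z_r+z_{r+1})=z(B_{r+1}-B_r)\cdot z(B_r+B_{r+1})=z^2(B_{r+1}^2-B_r^2).
\end{equation*}
Applying the defining recurrence \eqref{eq:binary-height-poly} twice gives $B_{r+2}-B_{r+1}=z(B_{r+1}^2-B_r^2)$, so $w_{r+1}=z(B_{r+2}-B_{r+1})$, as required.

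The remaining step is to justify that the substitution $w=z^2$ is legitimate, i.e.\ that it gives the correct generating function. This is exactly the content of Proposition~\ref{proposition:gf-L-explicit}: the class $\mathcal{L}$ marks old leaves by $w$ and all other nodes by $z$, so setting $w=z^2$ after having introduced a bookkeeping factor $v$ for every node of the reduced tree (each $\innernode$ contributing $v$ and each $\blacksquare$ contributing $v^2$, since an old leaf corresponds to two nodes of $\mathcal{T}$—itself and its parent, once combined with the $z\mapsto zv$, $w\mapsto wv^2$ bookkeeping at the outer level) correctly produces $G_r(z,v)$. No obstacle is anticipated here; the only slightly delicate point is verifying that the factor $v^2$ attached to $w$ matches the combinatorial interpretation of old leaves, which is precisely the convention from Section~\ref{sec:old-leaves:preliminaries}. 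The entire argument is thus a short combination of the substitution principle for $\Phi$ with a two-line induction driven by the recurrence for $B_r(z)$.
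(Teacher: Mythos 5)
Your proposal is correct and follows essentially the same route as the paper: both exploit linearity and multiplicativity of $\Phi$ to reduce everything to the iterated substitution on $z$ and $w$, prove $\Phi^{r}(z)|_{w=z^{2}} = zB_{r}(z)$ and $\Phi^{r}(w)|_{w=z^{2}} = z(B_{r+1}(z)-B_{r}(z))$ by induction using the recurrence $B_{r+1}=1+zB_{r}^{2}$, and justify the substitution $w=z^{2}$ by the observation that each old leaf together with its (distinct) parent accounts for two nodes. The only cosmetic difference is that you run a simultaneous induction on the pair $(z_{r},w_{r})$ with the difference-of-squares factorization, whereas the paper first establishes $\Phi^{r}(w)=\Phi^{r+1}(z)-\Phi^{r}(z)$ and a second-order recurrence for $\Phi^{r}(z)$ alone; the content is the same.
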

\begin{proof}
  First, note that the size of a tree with $k$ old leaves and $n$ nodes
  that are neither old leaves nor parents thereof is actually
  $n+2k$, as parents of old leaves are not explicitly marked. This explains why we have to
  substitute $w = z^{2}$ in order to arrive at the tree size.

  In contrast to the previous sections, the operator $\Phi$ is already linear and
  multiplicative, meaning that we have
  \[ \Phi^{r}(z^{n}w^{k}) = \Phi^{r}(z)^{n} \Phi^{r}(w)^{k}.  \]
  Investigating the repeated application of $\Phi$ to $z$ and $w$ leads to the recurrences
  \[ \Phi^{r}(z) = \Phi^{r-1}(z) + \Phi^{r-1}(z)^{2} - \Phi^{r-2}(z)^{2} \quad\text{ and
    }\quad \Phi^{r}(w) = \Phi^{r+1}(z) - \Phi^{r}(z)  \]
  for $r\geq 2$ and $r\geq 0$, respectively.
  With the recurrence for the polynomials $B_{r}$ from~\eqref{eq:binary-height-poly} it is easy to prove by induction that
  \[ \Phi^{r}(z)|_{w=z^{2}} = z B_{r}(z)  \]
  for $r \geq 0$. Thus, we also find $\Phi^{r}(w)|_{w=z^{2}} = z(B_{r+1}(z) - B_{r}(z))$.
  Overall, we obtain
  \[ \Phi^{r}(z^{n}w^{k})|_{w=z^{2}} = z^{n+k} B_{r}(z)^{n} (B_{r+1}(z) -
    B_{r}(z))^{k},   \]
  which, by linearity of $\Phi$, proves the proposition.
\end{proof}

For the next step in our analysis, we turn to the random variable $X_{n,r} =
X_{n,r}^{\operatorname{OL}}$ which models the size of the tree 
that results from reducing a random tree $\tau$ with $n$ nodes $r$-times.

As we have $\rho(\innernode) = \innernode$ (and thus no trees vanish completely), the
probability generating function for this random variable is simply
\[ \E v^{X_{n,r}} = \frac{[z^{n}] G_{r}(z,v)}{C_{n-1}}. \]

While the height polynomials $B_{r}(z)$ make it very difficult to obtain general results
for the factorial moments of $X_{n,r}$, special moments like
expectation and variance are no problem, and even a central limit theorem is possible.

\begin{theorem}\label{thm:oldleaves-moments}
  Let $r\in \N_{0}$ be fixed and consider $n\to\infty$. Then the expected tree size after deleting old leaves
  of a tree with $n$ nodes $r$-times and the corresponding variance are given by
  \begin{equation}
    \label{eq:oldleaves-expectation}
    \E X_{n,r} = (2 - B_{r}(1/4)) n - \frac{B_{r}'(1/4)}{8} + O(n^{-1}),
  \end{equation}
  and
  \begin{equation}
    \label{eq:oldleaves-variance}
    \V X_{n,r} = \Bigg(B_{r}(1/4) - B_{r}(1/4)^{2} + \frac{(2 - B_{r}(1/4))
      B_{r}'(1/4)}{2} \Bigg) n + O(1).
  \end{equation}
  All $O$-constants in this theorem depend implicitly on $r$.

  Additionally, the random variable $X_{n,r}$ is asymptotically normally distributed for
  fixed $r\geq 1$, i.e.
  \[ \frac{X_{n,r} - \mu n}{\sqrt{\sigma^{2} n}\,} \convdistr \mathcal{N}(0,1),  \]
  where $\mu = (2 - B_{r}(1/4))$ and $\sigma^{2} = \big(B_{r}(1/4) - B_{r}(1/4)^{2} +
  \frac{(2 - B_{r}(1/4)) B_{r}'(1/4)}{2} \big)$.
\end{theorem}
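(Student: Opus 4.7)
The plan is to exploit the explicit expression $G_{r}(z,v) = L(zB_{r}(z)v,\, z(B_{r+1}(z)-B_{r}(z))v^{2})$ from Proposition~\ref{prop:old-leaves:gf} together with the closed form~\eqref{eq:gf-old-leaves} for $L$. Writing $G_{r}(z,v) = (1 - \sqrt{\tilde D(z,v)}\,)/2$ with
\[ \tilde D(z,v) = 1 - 4zB_{r}(z)v - 4z(B_{r+1}(z)-B_{r}(z))v^{2} + 4z^{2}B_{r}(z)^{2}v^{2}, \]
a short calculation using the recurrence $B_{r+1}(z) = 1 + zB_{r}(z)^{2}$ produces the pleasant identity $\tilde D(z,1) = 1-4z$. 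Consequently $G_{r}(z,v)$ has the same dominant singularity $z = 1/4$ as the Catalan generating function, and every partial derivative $\partial^{d} G_{r}/\partial v^{d}$ at $v=1$ is accessible by standard singularity analysis after normalising with $C_{n-1}$.

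For the mean I would differentiate $G_{r}$ once in $v$, noting that the chain rule collapses nicely: using $B_{r+1} = 1 + zB_{r}^{2}$ the numerator simplifies, leaving an expression of the form $z(2-B_{r}(z))/\sqrt{1-4z}\,$. Expanding the analytic factor around $z = 1/4$ to linear order in $(1-4z)$ and applying singularity analysis yields~\eqref{eq:oldleaves-expectation} after division by $C_{n-1}$; the subleading constant $-B_{r}'(1/4)/8$ comes precisely from the $(1-4z)^{1/2}$-contribution. For the variance I would differentiate twice; the second derivative at $v=1$ takes the form $2z(1-B_{r}(z))/\sqrt{1-4z}\, + 2z^{2}(2-B_{r}(z))^{2}/(1-4z)^{3/2}$. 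Singularity analysis of the $(1-4z)^{-3/2}$ pole gives the leading $(2-B_{r}(1/4))^{2} n^{2}$ contribution to $\E X_{n,r}^{\underline 2}$, which cancels against $(\E X_{n,r})^{2}$ in $\V X_{n,r} = \E X_{n,r}^{\underline 2} + \E X_{n,r} - (\E X_{n,r})^{2}$; the surviving linear-in-$n$ term, assembled from the sub-leading piece of that pole and the $(1-4z)^{-1/2}$ pole, matches~\eqref{eq:oldleaves-variance} after routine simplification (using $B_{r+1}(1/4) - B_{r}(1/4) = (2-B_{r}(1/4))^{2}/4$).

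For the central limit theorem the strategy is a direct quasi-power argument. Because $\tilde D(1/4,1) = 0$ with $\tilde D_{z}(1/4,1) = -4 \neq 0$, the implicit function theorem furnishes an analytic branch $\rho(v)$ of zeros of $z \mapsto \tilde D(z,v)$ in a complex neighborhood of $v=1$ with $\rho(1) = 1/4$. Simplicity of the zero together with continuity of the roots of $\tilde D(\cdot,v)$ shows that $\rho(v)$ remains the unique dominant singularity for $v$ near $1$, and on that neighborhood $G_{r}(z,v) = g(v) - h(v)\sqrt{1 - z/\rho(v)}\,(1 + O(1-z/\rho(v)))$ with $g,h$ analytic. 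Uniform singularity analysis (cf.~\cite[Ch.~VI]{Flajolet-Sedgewick:ta:analy}) then yields $[z^{n}]G_{r}(z,v) \sim c(v)\rho(v)^{-n} n^{-3/2}$ uniformly in a small neighborhood of $v=1$, so dividing by $C_{n-1}$ puts $\E v^{X_{n,r}}$ in the quasi-power form $A(v) B(v)^{n}(1 + O(n^{-1}))$ with $B(v) = 1/(4\rho(v))$ and $B(1)=1$. Implicit differentiation gives $\rho'(1) = -\tilde D_{v}(1/4,1)/\tilde D_{z}(1/4,1) = -(2-B_{r}(1/4))/4$, so Hwang's theorem in the quantified form of~\cite{Heuberger-Kropf:2016:higher-dimen} delivers the CLT with $\mu = -4\rho'(1) = 2-B_{r}(1/4)$, the correct variance $\sigma^{2}$ from the second derivative of $\log B$, and convergence rate $O(n^{-1/2})$.

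I expect the main obstacle to lie in the variance bookkeeping: tracking the correct $O(1-4z)$-correction from each of the three algebraic factors $z$, $B_{r}(z)$, $(2-B_{r}(z))^{2}$ in the Taylor expansion around $z=1/4$ is essential for the $n^{2}$ cancellation between $\E X_{n,r}^{\underline 2}$ and $(\E X_{n,r})^{2}$ to leave exactly the coefficient $B_{r}(1/4) - B_{r}(1/4)^{2} + (2-B_{r}(1/4))B_{r}'(1/4)/2$. For the CLT the delicate point is non-degeneracy: one must verify $\sigma^{2} > 0$ for $r \geq 1$ (the case $r=0$ gives $B_{0}=1$, $B_{0}'=0$, hence $\sigma^{2}=0$, consistent with the deterministic $X_{n,0}=n$), which can be done either by induction on $r$ or by observing that $B_{r}(1/4) \in (1, 2)$ strictly for $r \geq 1$ and that $B_{r}'(1/4) > 0$.
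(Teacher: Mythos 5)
Your moment computations follow the paper's route essentially verbatim: the same collapse of $\tilde D(z,1)$ to $1-4z$ via $B_{r+1}=1+zB_r^2$, the same first derivative $z(2-B_r(z))/\sqrt{1-4z}$, the same second derivative and the same variance bookkeeping. (In fact your second derivative, with the factor $(2-B_r(z))^2$ in the $(1-4z)^{-3/2}$ term, is the correct one; the paper's displayed formula omits the square, which is a typo, as one sees from its own subsequent leading term $(2-B_r(1/4))^2n^2$.) Where you genuinely diverge is the central limit theorem. The paper does not perturb the singularity at all: it writes $n-X_{n,r}$ as an additive tree parameter whose toll function is bounded and \emph{local} (determined by the first $r$ levels), and invokes Janson's CLT for local functionals of conditioned Galton--Watson trees. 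You instead run a quasi-power argument on $G_r(z,v)$ via the moving branch point $\rho(v)$. Your route is viable and buys a convergence rate $O(n^{-1/2})$, which the paper does not claim for this theorem, but it requires two points you should make explicit: (i) $\tilde D(\cdot,v)$ has degree $2^r$ in $z$ yet degree $1$ at $v=1$, so "continuity of the roots" really means that all coefficients of $z^k$, $k\ge 2$, carry a factor $1-v$ and the remaining $2^r-1$ roots escape to infinity as $v\to 1$, keeping $\rho(v)$ dominant; and (ii) your non-degeneracy argument is incomplete as stated, since $B_r(1/4)\in(1,2)$ makes $B_r(1/4)-B_r(1/4)^2$ \emph{negative}, so positivity of $\sigma^2$ does not follow merely from $B_r'(1/4)>0$ and needs an actual estimate or induction (it does hold, e.g.\ $\sigma^2=1/16$ for $r=1$). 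The paper's softer route via Janson sidesteps both issues at the cost of giving no rate.
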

\begin{proof}
  First of all, we observe that
  Proposition~\ref{proposition:gf-L-explicit} and Proposition~\ref{prop:old-leaves:gf} combined with the
  recursion $B_{r}(z) = 1 + zB_{r-1}(z)^{2}$ allow us to write the bivariate generating
  function as
  \[ G_{r}(z,v) = \frac{1 - \sqrt{1 - 4zv(B_{r}(z)(1-v) + v)}\,}{2}. \]
  The asymptotic expansion for the expected value $\E X_{n,r}$ can now be obtained by
  determining
  \[ \frac{1}{C_{n-1}} [z^{n}] \frac{\partial }{\partial v} G_{r}(z,v)|_{v=1} =
    \frac{1}{C_{n-1}} [z^{n}] \frac{z (2 - B_{r}(z))}{\sqrt{1 -
        4z}\,}. \]

  By means of singularity analysis we find
  \[ \E X_{n,r} = (2 - B_{r}(1/4)) n - \frac{B_{r}'(1/4)}{8} - \Big(\frac{3
      B_{r}'(1/4)}{16} + \frac{3 B_{r}''(1/4)}{128}\Big) n^{-1} + O(n^{-2}), \]
  which proves~\eqref{eq:oldleaves-expectation}. For the second factorial moment we obtain
  \[ \E X_{n,r}^{\underline{2}} = \frac{1}{C_{n-1}} [z^{n}] \frac{\partial^{2} }{\partial
      v^{2}} G_{r}(z,v)|_{v=1} = \frac{1}{C_{n-1}} [z^{n}]
    \Big(\frac{2z^{2}(2-B_{r}(z))}{(1-4z)^{3/2}} +
    \frac{2z(1-B_{r}(z))}{(1-4z)^{1/2}}\Big), \]
  which yields
  \begin{multline*} 
    \E X_{n,r}^{\underline{2}} = (2-B_{r}(1/4))^{2} n^{2} + \Big(2B_{r}(1/4) -
    B_{r}(1/4)^{2} - 2 + \frac{(2 - B_{r}(1/4)) B_{r}'(1/4)}{4}\Big)n \\
    + \frac{(2 - B_{r}(1/4)) B_{r}''(1/4)}{64} - \frac{B_{r}'(1/4)^{2}}{64} -
    \frac{B_{r}(1/4) B_{r}'(1/4)}{8} + O(n^{-1}).
  \end{multline*}
  The variance can now be obtained via $\V X_{n,r} = \E X_{n,r}^{\underline{2}} + \E
  X_{n,r} - (\E X_{n,r})^{2}$, which proves~\eqref{eq:oldleaves-variance}.

  In order to show asymptotic normality of $X_{n,r}$ we investigate the random variable $n
  - X_{n,r}$, which counts the number of nodes that are deleted after reducing some tree $r$
  times. Observe that this quantity can be seen as an additive tree parameter $F_{r}$
  defined recursively by
  \[ F_{r}(\tau_{n}) = F_{r}(\tau_{i_{1}}) + F_{r}(\tau_{i_{2}}) + \cdots +
    F_{r}(\tau_{i_{\ell}}) + f_{r}(\tau_{n}) \quad\text{ and }\quad F_{r}(\innernode) = 0\]
  where $\tau_{n}$ is some tree of size $n$, $\tau_{i_{1}}$ up to $\tau_{i_{\ell}}$ are
  the subtrees rooted at the children of the root of $\tau_{n}$, and $f_{r}\colon
  \mathcal{L} \to \{0,1,\ldots, r-1\}$ is a toll function defined by
  \[ f_{r}(\tau_{n}) = \sum_{j=0}^{r-1} \begin{cases} 1 & \text{ if } \rho^{j}(\tau_{n})
      \text{ has an old leaf attached to its root},\\
      0 & \text{ otherwise, }
    \end{cases} \]
  for $r\geq 1$. Now, as $f_{r}(\tau_{n})$ enumerates the number of old leaves deleted
  from the root of $\tau_{n}$ after $r$ reductions, $F_{r}(\tau_{n})$ equals the total
  number of deleted nodes after $r$ reductions.

  The fact that $r$ is fixed implies that $f_{r}$ is not only bounded, but also a
  so-called \emph{local functional}, meaning that the value of $f_{r}(\tau_{n})$ can
  already be determined from the first $r$ levels of $\tau_{n}$. This is because one
  application of $\rho$ can reduce the distance between the root of the tree and the
  closest old leaf by at most one. Thus all old leaves that are deleted from the root
  during $r$ reductions have to be found within the first $r$ levels of $\tau_{n}$.

  As we have now established that $f_{r}$ is both bounded and a local functional, we are
  able to apply~\cite[Theorem 1.13]{Janson:2016:normality-add-func}, which proves that $n
  - X_{n,r}$ is asymptotically normally distributed. Thus $X_{n,r}$ is
  asymptotically normally distributed as well, which proves the statement.
\end{proof}

\begin{remark}
  In~\cite{Flajolet-Odlyzko:1982}, the asymptotic behavior of a sequence strongly related
  to $B_{r}(1/4)$ was studied: in Section~4, the authors define a sequence $f_{n}$ such that
  $f_{r+1}=\frac12 - \frac{B_{r}(1/4)}{4}$, in our notation. They prove the
  asymptotic expansion $f_{n} = \frac{1}{n + \log n + O(1)}$. This allows us to conclude
  that the asymptotic behavior of $B_{r}(1/4)$ can be described as
  \[ B_{r}(1/4) = 2 - \frac{4}{r} + \frac{4 \log r}{r^{2}} + O(r^{-2}) \]
  for $r \to \infty$.
\end{remark}

\section{Cutting Old Paths}
\label{sec:cutting-old-paths}

\subsection{The Expansion
  Operator}\label{sec:old-paths:expansion-operator}
As in previous sections, we adapt the ``old leaves'' reduction
to remove all ``old paths''. That is, the tree reduction $\rho\colon \mathcal{L} \to
\mathcal{L}$ in this section reduces a tree by removing all paths that
end in an old leaf. This operation is illustrated in
Figure~\ref{fig:old-paths:illustration}, where $\blacksquare$ marks old leaves and
$\innernode$ marks all nodes that are neither old leaves nor parents thereof.

\begin{figure}[ht]
  \centering
          \[
    \begin{tikzpicture}[thick, scale=0.75, baseline={([yshift=-0.5em]current bounding
      box.center)}]
      \node[draw, circle] {}
      child[gray, dashed] {node[draw, circle] {} child {node[draw, circle] {} child{node[draw,
            rectangle] {}}}}
      child {node[draw, circle] {}
        child[gray, dashed] {node[draw, rectangle] {}}
        child[] {node[draw, rectangle] {}}
      }
      child[] {node[draw, circle] {} child[gray, dashed] {node[draw, rectangle] {}}};
    \end{tikzpicture}
    \quad \mapsto \quad
    \begin{tikzpicture}[thick, scale=0.75, baseline={([yshift=-0.5em]current bounding
      box.center)}]
      \node[draw, circle] {}
      child[gray, dashed] {node[draw, circle] {}
        child[] {node[draw, rectangle] {}}
      }
      child[gray, dashed] {node[draw, rectangle] {} };
    \end{tikzpicture}
    \quad \mapsto \quad
    \begin{tikzpicture}[thick, scale=0.75, baseline={([yshift=-0.5em]current bounding
      box.center)}]
      \node[draw, circle] {
      };
    \end{tikzpicture}
    \]
  \caption{Illustration of the ``cutting old paths''-operator $\rho$}
  \label{fig:old-paths:illustration}
\end{figure}
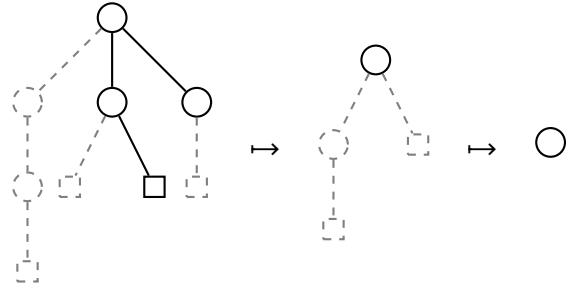

Obviously, we also need the combinatorial class of paths $\mathcal{P}$ for our
analysis. The bivariate generating function of $\mathcal{P}$ is given by $P = P(z,w) =
\frac{w}{1-z}$, where
$w$ and $z$ mark $\blacksquare$ and $\innernode$,
respectively. Also, we omit the arguments of $P$ for the sake of
readability. Now, we determine the shape of the expansion operator $\Phi$.

\begin{proposition}\label{prop:old-paths:expansion}
  Let $\mathcal{F}\subseteq\mathcal{L}$ be a family of plane trees with bivariate
  generating function $f(z,w)$, where $z$ marks nodes that are neither old leaves
  nor parents thereof and $w$ marks old leaves. Then the generating function
  for $\rho^{-1}(\mathcal{F})$, the family of trees whose reduction is in $\mathcal{F}$,
  is given by
  \begin{equation}
    \label{eq:old-paths:expansion}
    \Phi(f(z,w)) = f(z+P, zP + P^{2}).
  \end{equation}
\end{proposition}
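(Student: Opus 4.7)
My plan is to mirror the proof of Proposition~\ref{prop:old-leaves:expansion}. Since $\Phi$ is linear by its combinatorial definition, it suffices to determine the substitution rule on monomials $z^n w^k$ by enumerating, for each node of the reduced tree, the possible local modifications that yield a preimage under $\rho$. The key new ingredient compared to the old-leaves case is that path-removal is a \emph{global} operation capable of absorbing the old leaf $\blacksquare$ itself, which forbids one of the three natural local modifications.

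For each $\innernode$ of the reduced tree, the preimage either leaves the node unchanged (contribution $z$) or attaches some path $\pi\in\mathcal{P}$ of length $\ell\ge 1$ as its new leftmost child. Summing over $\ell$, and keeping track of the fact that the $\innernode$ loses its $z$-marking exactly when $\ell = 1$ (its new leftmost child is then a leaf, so the node becomes a parent of an old leaf and is unmarked) and that the second-to-last node of any length-$\ell$ path with $\ell\ge 2$ is also unmarked, the contributions telescope to
\[ w + \sum_{\ell\ge 2} z^{\ell-1}w = \frac{w}{1-z} = P. \]
Hence each $\innernode$ contributes a factor $z + P$.

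For each old leaf $\blacksquare$ with parent $p$, three candidate modifications arise by analogy with the old-leaves proof: (a$'$) attach a path as new leftmost sibling of $\blacksquare$; (b$'$) attach a path as sole child of $\blacksquare$; (c$'$) do both. The central step, and the main obstacle of this proof, is to show that (b$'$) is \emph{invalid}: the added path makes $\blacksquare$ the head of a longer chain in which $\blacksquare$ is still leftmost and has a single child, so the maximal old path rooted at the newly added old leaf traverses all of the attached path and continues through $\blacksquare$ itself, whence the subsequent reduction deletes $\blacksquare$. In (a$'$) and (c$'$), by contrast, $\blacksquare$ acquires a left sibling in the preimage and is therefore no longer leftmost, so every maximal old path from a newly added old leaf stops strictly below $\blacksquare$ and $\blacksquare$ reappears as an old leaf after reduction.

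It then remains to sum the contributions of (a$'$) and (c$'$) over all admissible path lengths, tracking the marking changes of $\blacksquare$ and $p$: each of them becomes an $\innernode$ (marked $z$) exactly when its respective new leftmost child is not a leaf. Applying the same telescoping identity and its square yields $zP$ for (a$'$) and $P^2$ for (c$'$). Each $\blacksquare$ therefore contributes the factor $zP + P^2$, and by linearity $\Phi(z^n w^k) = (z+P)^n (zP + P^2)^k$, which matches the claimed substitution.
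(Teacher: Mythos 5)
Your proof is correct and follows essentially the same route as the paper's: linearity reduces everything to the monomial $z^nw^k$, each $\innernode$ optionally receives an old path as a new leftmost child (giving $z+P$), and each $\blacksquare$ admits exactly the two expansions that attach a path to its parent, or to both its parent and itself (giving $zP+P^2$), with the sole-attachment-to-$\blacksquare$ option excluded. The only difference is one of explicitness: you spell out the telescoping sums behind the factors $P$, $zP$, $P^2$ and justify the exclusion of (b$'$) by showing $\rho$ would delete $\blacksquare$, where the paper simply cites the ``ambiguity'' argument from the path-cutting case; both justifications amount to the same fact.
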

\begin{proof}
  With linearity of the operator $\Phi$ being obvious from a combinatorial point of view,
  we only have to investigate the expansion of any tree represented by $z^{n}w^{k}$,
  i.e.\ a tree with $n$ nodes that are neither old leaves nor parents thereof and $k$ old
  leaves.

  There are two options to expand an old leaf $\blacksquare$:
  \begin{itemize}
  \item[--] either appending an old path to the parent of $\blacksquare$, which
    turns the old leaf into $\innernode$,
  \item[--] or an old path is appended to both the parent of $\blacksquare$ and to
    $\blacksquare$ itself.
  \end{itemize}
  Note that just appending an old path to $\blacksquare$ is not a valid expansion as this
  introduces ambiguity. This is the same argument that we also used in the proof of
  Proposition~\ref{prop:paths:expansion}. Overall, this means that $\Phi$ has to map $w$
  to $zP + P^{2}$.

  On the other hand, the nodes represented by $\innernode$ can optionally be expanded by
  attaching an old path. Otherwise they stay as they are. Overall, this implies $\Phi(z) =
  z+P$.

  Putting everything together, we immediately arrive at the statement of the Proposition.
\end{proof}
Analogously to the previous reductions, surjectivity of $\rho\colon\mathcal
L\to\mathcal L$ implies the following
corollary.
\begin{corollary}
  The generating function for plane trees $L(z,w)$ satisfies the functional
  equation
  \[ \Phi(L(z,w)) = L(z,w).  \]
\end{corollary}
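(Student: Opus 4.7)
The plan is to mirror the one-line arguments used for the analogous corollaries earlier in the paper, most directly Corollary~\ref{cor:phi:identity}. Proposition~\ref{prop:old-paths:expansion} has already identified $\Phi(f(z,w))$ as the generating function of $\rho^{-1}(\mathcal{F})$ whenever $f$ enumerates a subfamily $\mathcal{F}\subseteq\mathcal{L}$. Specializing to $\mathcal{F} = \mathcal{L}$ reduces the target identity $\Phi(L(z,w)) = L(z,w)$ to the combinatorial statement $\rho^{-1}(\mathcal{L}) = \mathcal{L}$, i.e.\ surjectivity of $\rho\colon\mathcal{L}\to\mathcal{L}$. This is the only content that needs to be established.

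To verify surjectivity I would exhibit, for each $\tau\in\mathcal{L}$, an explicit preimage $\sigma$. In the degenerate case $\tau = \innernode$, the tree is self-preimage: the root has no parent and therefore cannot be an old leaf, so $\rho(\innernode) = \innernode$. For any other $\tau$, I construct $\sigma$ by inserting a fresh leaf as a new leftmost child of the parent of every old leaf of $\tau$. In $\sigma$ the original old leaves of $\tau$ are no longer leftmost (they have just acquired a left sibling), so the old leaves of $\sigma$ are exactly the newly inserted ones. Each such inserted leaf has a parent whose branching degree is now at least two, so the maximal old path ending at the inserted leaf cannot extend upward to the parent and consists of that single leaf alone. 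Applying $\rho$ to $\sigma$ therefore removes exactly the inserted leaves, after which the original old leaves regain leftmost status, and $\tau$ is recovered.

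The only delicate point, and the would-be obstacle, is checking that the construction introduces no unintended old paths whose removal by $\rho$ would delete parts of $\tau$. This is precisely why one inserts a new sibling rather than a new child: the sibling insertion forces the parent of the inserted leaf to have degree at least two, breaking the linearity needed for the old path to propagate upward. Once this local check is in place, surjectivity holds, and Proposition~\ref{prop:old-paths:expansion} applied to $\mathcal{F}=\mathcal{L}$ yields $\Phi(L(z,w)) = L(z,w)$ immediately.
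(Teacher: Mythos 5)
Your proposal is correct and follows essentially the same route as the paper: the paper's entire proof is the remark that surjectivity of $\rho\colon\mathcal{L}\to\mathcal{L}$ together with Proposition~\ref{prop:old-paths:expansion} gives $\Phi(L(z,w))=L(z,w)$. You merely make the surjectivity explicit by exhibiting a preimage (inserting a new leftmost leaf at the parent of each old leaf, so that the only old paths of the expanded tree are these singletons), which is a valid instance of the expansion described in that proposition.
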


In order to carry out a detailed analysis of this reduction, we need information about the
iterated application of $\Phi$ to $L(zv_{I}, wv_{L}^{2})$, which leads to the generating function
$G_{r}(z,v_{I},v_{L}^2)$ measuring the change in the tree size after $r$ applications of the
reduction. The following proposition deals with determining this generating function.

\begin{proposition}\label{prop:old-paths:gf}
  Let $r\in \mathbb{N}_{0}$. The trivariate generating function $G_{r}(z,v_{I},v_{L}^{2}) =
  G_{r}^{\operatorname{OP}}(z,v_{I},v_{L}^{2})$ enumerating plane trees, where $z$ marks the tree
  size, $v_{L}$ marks all old leaves, and $v_{I}$ marks all nodes that are neither old
  leaves nor parents thereof, is given by
  \[ G_{r}(z,v_{I},v_{L}^{2}) = \Phi^{r}(L(zv_{I}, wv_{L}^{2}))|_{w=z^{2}} =
    L\bigg(\frac{u(1-u^{r+1})}{(1+u)(1-u^{r+2})} v_{I}, \frac{u^{r+2} (1-u)^{2}}{(1+u)^{2}
      (1-u^{r+2})^{2}} v_{L}^{2}\bigg),  \]
  where $z = u/(1+u)^{2}$.
\end{proposition}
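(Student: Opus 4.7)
My plan follows the pattern used in Proposition~\ref{prop:old-leaves:gf} and in the earlier ``cutting paths'' Proposition~\ref{prop:paths:gf}. The first equality $G_{r}(z,v_I,v_L^{2}) = \Phi^r(L(zv_I, wv_L^{2}))|_{w=z^{2}}$ is combinatorial: the variable $v_I$ tracks $\innernode$-nodes of the reduced tree, $v_L$ tracks old leaves, and $\Phi^r$ expands the tree while preserving these markers. Because an old leaf $\blacksquare$ contributes size $2$ (the leaf itself plus its unmarked parent), the substitution $w = z^{2}$ converts the variable marking old leaves in $L$ into the contribution to the total size.

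For the explicit form, observe that $\Phi$ defined by \eqref{eq:old-paths:expansion} is just the substitution $z \mapsto z+P$, $w \mapsto zP+P^{2}$ with $P = w/(1-z)$, so $\Phi$ is both linear and multiplicative. Consequently $\Phi^r(z^n w^k) = \Phi^r(z)^n \Phi^r(w)^k$. Setting
\[ a_r \coloneqq \Phi^r(z)|_{w=z^{2}}, \qquad b_r \coloneqq \Phi^r(w)|_{w=z^{2}}, \]
applying $\Phi^r$ termwise to $L(zv_I, wv_L^{2})$ yields $L(a_r v_I, b_r v_L^{2})$, so it suffices to identify $a_r$ and $b_r$ with the two fractions appearing in the statement.

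To get recurrences for $a_r$, $b_r$, I would write $\Phi^{r+1} = \Phi^r \circ \Phi$ and use linearity plus multiplicativity (applied to the geometric expansion of $1/(1-z)$) to obtain
\[ \Phi^{r+1}(z) = \Phi^r(z) + \frac{\Phi^r(w)}{1 - \Phi^r(z)}, \qquad \Phi^{r+1}(w) = \frac{\Phi^r(z)\Phi^r(w)}{1 - \Phi^r(z)} + \frac{\Phi^r(w)^{2}}{(1 - \Phi^r(z))^{2}}. \]
Since setting $w = z^{2}$ is merely a substitution and commutes with these rational identities, the same relations descend to
\[ a_{r+1} = a_r + \frac{b_r}{1-a_r}, \qquad b_{r+1} = \frac{b_r}{1-a_r}\,a_{r+1}. \]

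The claim then follows by induction on $r$, with base case $a_0 = u/(1+u)^{2} = z$ and $b_0 = z^{2}$. The inductive step relies on the algebraic identities
\[ (1+u)(1-u^{r+2}) - u(1-u^{r+1}) = 1 - u^{r+3} \]
(which simplifies $1 - a_r$ to $(1-u^{r+3})/((1+u)(1-u^{r+2}))$) and
\[ u(1-u^{r+1})(1-u^{r+3}) + u^{r+2}(1-u)^{2} = u(1-u^{r+2})^{2}, \]
which makes $a_{r+1}$ telescope to $u(1-u^{r+2})/((1+u)(1-u^{r+3}))$; the formula for $b_{r+1}$ is then immediate from $b_{r+1} = (b_r/(1-a_r))\,a_{r+1}$. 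The main obstacle is really just bookkeeping in the $u$-algebra, which is routine but benefits from spotting the factorizations above.
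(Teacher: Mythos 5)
Your proposal is correct; I checked the coupled recurrence $a_{r+1}=a_r+b_r/(1-a_r)$, $b_{r+1}=a_{r+1}\,b_r/(1-a_r)$ and both of your $u$-identities, and the induction closes (the base case $a_0=z$, $b_0=z^2$ also matches the claimed formulas at $r=0$). The overall strategy is the same as the paper's: exploit that $\Phi$ is linear and multiplicative, so everything reduces to identifying $\Phi^r(z)|_{w=z^2}$ and $\Phi^r(w)|_{w=z^2}$. Where you differ is in the execution. The paper routes through the auxiliary quantity $\Phi^r(P)$, proves the product formula $\Phi^{r}(P) = P \prod_{j=1}^{r} \Phi^{j}(z)/(1 - \Phi^{j}(z))$, and then establishes the Fibonacci-polynomial representation $f_r = zF_{r+1}(-z)/F_{r+2}(-z)$ by induction using d'Ocagne's identity~\eqref{eq:fibonacci}, converting to the $u$-variable only at the end via~\eqref{eq:fib-poly-u}; this keeps the computation parallel to the other reductions in the paper and yields the Fibonacci forms as a byproduct. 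You instead work with a first-order coupled system in $(a_r,b_r)$ and verify the closed forms directly in $u$, which is more self-contained (no Fibonacci polynomials, no product over the full history) at the cost of having to spot the two factorizations $(1+u)(1-u^{r+2})-u(1-u^{r+1})=1-u^{r+3}$ and $u(1-u^{r+1})(1-u^{r+3})+u^{r+2}(1-u)^2=u(1-u^{r+2})^2$ by hand. Both routes are valid; yours is arguably the more elementary of the two.
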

\begin{proof}
  Observe that the operator $\Phi$ is already linear and multiplicative, which is why we
  can concentrate on finding suitable expressions for $\Phi^{r}(z)$ and $\Phi^{r}(w)$.

  First of all, for $r\geq 1$ the recurrences
  \[ \Phi^{r}(z) = \Phi^{r-1}(z) + \Phi^{r-1}(P), \qquad \Phi^{r}(w) =
    \Phi^{r-1}(P)\Phi^{r}(z)  \]
  follow immediately from~\eqref{eq:old-paths:expansion}. Furthermore, the relation
  \[ \Phi^{r}(P) = P \prod_{j=1}^{r} \frac{\Phi^{j}(z)}{1 - \Phi^{j}(z)} \]
  can easily be proved by induction. Then, by setting $f_{r} \coloneqq \Phi^{r}(z)|_{w=z^{2}}$
  the recurrences above translate to
  \[ f_{r} = f_{r-1} + z \prod_{j=0}^{r-1} \frac{f_{j}}{1 - f_{j}}.  \]
  As a next step, we show by induction that $f_{r}$ can be expressed in terms of Fibonacci
  polynomials as
  \[ f_{r} = \frac{z F_{r+1}(-z)}{F_{r+2}(-z)},  \]
  where in particular~\eqref{eq:fibonacci} was used. As a consequence, we find
  \[ \Phi^{r}(P)|_{w = z^{2}} = f_{r+1} - f_{r} = \frac{z F_{r+2}(-z)}{F_{r+3}(-z)} -
    \frac{z F_{r+1}(-z)}{F_{r+2}(-z)} = \frac{z^{r+2}}{F_{r+2}(-z) F_{r+3}(-z)}. \]
  This allows us to express $g_{r} \coloneqq \Phi^{r}(w)|_{w=z^{2}}$ as
  \[ g_{r} = \Phi^{r-1}(P)|_{w=z^{2}} \cdot  f_{r} = \frac{z^{r+2}}{F_{r+2}(-z)^{2}}.  \]
  Finally, as we have $\Phi^{r}(z^{n} w^{k})|_{w=z^{2}} = f_{r}^{n} g_{r}^{k}$,
  substituting $z = u/(1+u)^{2}$ and using~\eqref{eq:fib-poly-u} completes the proof.
\end{proof}

\subsection{Analysis of Tree Size and Related Parameters}

We investigate the behavior of the random variable $X_{n,r} = X_{n,r}^{\operatorname{OP}}$
which models the number of nodes remaining after reducing a random tree
$\tau$ with $n$ nodes $r$-times. The tree $\tau$ is chosen uniformly
among all trees of size $n$. Analogously to the ``old leaf''-reduction from the previous
section, we also have $\rho(\innernode) = \innernode$ for the ``old path''-reduction,
meaning that no trees vanish completely. For the sake of convenience we set $G_{r}(z,v) \coloneqq
G_{r}(z,v,v^{2})$, allowing us to write the probability generating function of
$X_{n.r}$ as
\begin{equation*}
  \E v^{X_{n,r}}=\frac{[z^{n}]G_{r}(z,v)}{C_{n-1}}.
\end{equation*}
With the help of Proposition~\ref{prop:old-paths:gf}, it is easy to obtain expressions for
the factorial moments $\E X_{n,r}^{\underline{d}}$ for fixed $d$ by differentiating $G_{r}(z,v)$
$d$-times with respect to $v$ and setting $v=1$ afterwards. General expressions
for $d\ge 2$ (coinciding with the value given for $d=2$) are available but less pleasant.

\begin{lemma}\label{lemma:moments-cut-old-paths}
  The factorial moments of $X_{n,r}$ are
  \begin{align*}
    \E X_{n,r}&=\frac{1}{C_{n-1}}[z^{n}] \frac{u (1 + u^{r+1})}{(1+u) (1-u^{r+2})},\\
    \E X_{n,r}(X_{n,r}-1)&=\frac{2}{C_{n-1}}[z^{n}] \frac{(1+u) u^{r+2}}{(1-u) (1-u^{r+2})^{2}}
  \end{align*}
  and
  \begin{align*}
    \E X_{n, r}^{\underline d} &=
    \frac{d!}{C_{n-1}}[z^n]\frac{1-u}{1+u}\biggl(\frac{u(1+u^{r+1})}{(1-u)(1-u^{r+2})}
    + \frac{u}{1-u}\sqrt{\frac{1-u^r}{1-u^{r+2}}\,}\biggr)^d\\
    &\qquad\times
    \tilde N_{d-1}\biggl(\frac{2u^{2r+2} - u^{r+2}+2u^{r+1}-u^r+2}{(1+u)^2u^{r}}
     + \frac{2(1+u^{r+1})(1-u^{r+2})}{u^r(1+u)^2}\sqrt{\frac{1-u^r}{1-u^{r+2}}\,}\biggr)
  \end{align*}
  for $d\ge 2$.
\end{lemma}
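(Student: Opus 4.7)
The plan is to mimic the approach in the proof of Proposition~\ref{prop:cut-leaves:moments}. Via the Taylor expansion identity
\begin{equation*}
\frac{\partial^d}{\partial v^d}G_r(z,v)\Big|_{v=1} = d!\,[q^d]\,G_r(z,1+q) = d!\,[q^d]\,L\bigl(A(1+q),\,B(1+q)^2\bigr),
\end{equation*}
with $A = \frac{u(1-u^{r+1})}{(1+u)(1-u^{r+2})}$ and $B = \frac{u^{r+2}(1-u)^2}{(1+u)^2(1-u^{r+2})^2}$ read off from Proposition~\ref{prop:old-paths:gf}, the task reduces to extracting the coefficient of $q^d$ from the composed function $L(A(1+q),B(1+q)^2)$. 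I would rewrite this in a form where the Narayana identity~\eqref{eq:gf-narayana:quotient} applies directly.

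First I would expand the radicand of $L$ as a polynomial in $q$, namely
\begin{equation*}
R(q) \coloneqq 1 - 4A(1+q) - 4B(1+q)^2 + 4A^2(1+q)^2 = \Delta^2 - 4q\bigl(A(1-2A)+2B\bigr) + 4q^2(A^2-B).
\end{equation*}
The constant term equals $\Delta^2 = 1-4z = \frac{(1-u)^2}{(1+u)^2}$, as Corollary~\ref{cor:phi:identity} forces $L(A,B)$ to coincide with the Catalan generating function. A direct computation yields $A(1-2A)+2B = \frac{u(1-u)(1+u^{r+1})}{(1+u)^2(1-u^{r+2})}$ and, using the crucial factorization $u^2(1-u^{r+1})^2 - u^{r+2}(1-u)^2 = u^2(1-u^r)(1-u^{r+2})$, also $A^2-B = \frac{u^2(1-u^r)}{(1+u)^2(1-u^{r+2})}$.

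Introducing $\alpha, \beta$ via $\alpha+\beta = \tfrac{2(A(1-2A)+2B)}{\Delta^2}$ and $(\alpha-\beta)^2 = \tfrac{4(A^2-B)}{\Delta^2}$, I obtain $R(q)/\Delta^2 = 1 - 2(\alpha+\beta)q + (\alpha-\beta)^2 q^2$, and the defining equation of $T(z,t)$ gives
\begin{equation*}
L\bigl(A(1+q),\,B(1+q)^2\bigr) = \tfrac{1-\Delta}{2} + \tfrac{\Delta(\alpha-\beta)}{2}q + \Delta\,T(\alpha q,\,\beta q).
\end{equation*}
Invoking~\eqref{eq:gf-narayana:quotient}, the coefficient of $q^d$ for $d\ge 2$ becomes $\Delta\alpha^d\,\tilde N_{d-1}(\beta/\alpha)$, while $[q^1]$ equals $\Delta(\alpha+\beta)/2$. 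Computing $\alpha+\beta = \frac{2u(1+u^{r+1})}{(1-u)(1-u^{r+2})}$ and $\alpha-\beta = \frac{2u}{1-u}\sqrt{\frac{1-u^r}{1-u^{r+2}}\,}$ immediately yields the stated first moment, and for $d=2$ one uses $\tilde N_1(x)=x$ together with $\alpha\beta = \bigl((\alpha+\beta)^2-(\alpha-\beta)^2\bigr)/4 = \frac{u^{r+2}(1+u)^2}{(1-u)^2(1-u^{r+2})^2}$ to recover the stated second factorial moment after multiplication by $\Delta$.

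The main obstacle is the final algebraic simplification: evaluating $\alpha$ explicitly as the sum of a rational and a radical term is direct from the formulas above, but presenting the argument of $\tilde N_{d-1}$ in the closed form stated in the lemma requires computing $\beta/\alpha = \alpha\beta/\alpha^2$ (or, equivalently, using Proposition~\ref{prop:narayana-reverse} to switch between the representations $\alpha^d\tilde N_{d-1}(\beta/\alpha) = \beta^d\tilde N_{d-1}(\alpha/\beta)$) and performing the corresponding rationalization of the square root expressions. The same factorization $(1+u^{r+1})^2 - (1-u^r)(1-u^{r+2}) = u^r(1+u)^2$ that cleared the denominator of $A^2-B$ also drives the cancellation in the final simplification, and these symbolic manipulations are most reliably carried out with the computer algebra system mentioned in the introduction.
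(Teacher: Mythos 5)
Your proposal is correct and follows essentially the same route as the paper's proof: the Taylor-shift identity $d!\,[q^d]G_r(z,1+q)$, completing the radicand to $\Delta^2-2q(\cdot)+q^2(\cdot)$ with $\Delta=\frac{1-u}{1+u}$, rewriting $L(a(1+q),b(1+q)^2)=\frac{1-\Delta}{2}+\frac{\Delta(\alpha-\beta)}{2}q+\Delta T(\alpha q,\beta q)$, and extracting coefficients via \eqref{eq:gf-narayana:quotient} to get $d!\,\Delta\,\alpha^d\tilde N_{d-1}(\beta/\alpha)$ for $d\ge 2$ and $\frac{d!\,\Delta(\alpha+\beta)}{2}$ for $d=1$. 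Your explicit values of $\alpha\pm\beta$ and $\alpha\beta$, and the two factorizations driving the simplification, all check out and merely spell out the algebra the paper compresses into ``inserting everything concludes the proof.''
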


\begin{proof}
  The expressions for $d\in\{1, 2\}$ can be obtained by differentiation. We
  consider the general case here.

  We use the abbreviations
  \begin{equation*}
    a=\frac{u(1-u^{r+1})}{(1+u)(1-u^{r+2})},\qquad
    b=\frac{u^{r+2}(1-u)^2}{(1+u)^2(1-u^{r+2})^2}, \qquad
    \Delta = \frac{1-u}{1+u}.
  \end{equation*}
  By the same argument as in the proof of Proposition~\ref{prop:cut-leaves:moments}, we
  have
  \begin{equation*}
    \frac{\partial^d}{\partial v^d} G_r(z, v)\Bigr|_{v=1} = d![q^d]L(a(1+q), b(1+q)^2).
  \end{equation*}
  By using \eqref{eq:gf-old-leaves}, we rewrite $L(a(1+q), b(1+q)^2)$ as
  \begin{align*}
    L(a(1+q), b(1+q)^2) &= \frac{1-\sqrt{(1-2a-2aq)^2-4b(1+q)^2}\,}{2}\\
    &=\frac{1-\sqrt{(1-2a)^2-4b -2q(2a(1-2a)+4b)+q^2(4a^2-4b)}\,}2.
  \end{align*}

  We have
  \begin{equation*}
    (1-2a)^2-4b = \Delta^2, \qquad \sqrt{\frac{4a^2-4b}{\Delta^2}}\, = \frac{2u}{1-u}\sqrt{\frac{1-u^r}{1-u^{r+2}}}\,.
  \end{equation*}
  We choose $\alpha$ and $\beta$ such that
  \begin{equation*}
    \alpha+\beta=\frac{2a(1-2a)+4b}{\Delta^2}, \qquad
    \alpha-\beta= \sqrt{\frac{4a^2-4b}{\Delta^2}}\,.
  \end{equation*}
  This results in
  \begin{align*}
    L(a(1+q), b(1+q)^2) &= \Delta\frac{\frac1\Delta-1+q(\alpha-\beta)}{2} + \Delta\frac{1-q(\alpha-\beta)-\sqrt{1-2q(\alpha+\beta)+q^2(\alpha-\beta)}\,}{2}\\
    &=\Delta\frac{\frac1\Delta-1+q(\alpha-\beta)}{2} + \Delta T(\alpha q, \beta q).
  \end{align*}
  Using~\eqref{eq:gf-narayana:quotient} to extract the coefficient of $q^d$
  for $d\ge 1$ yields
  \begin{equation*}
    \frac{\partial^d}{\partial v^d} G_r(z, v)\Bigr|_{v=1} =
    d!\Delta\Bigl(\frac{\alpha-\beta}2\iverson{d=1} + \alpha^d N_{d-1}\Bigl(\frac{\beta}{\alpha}\Bigr)\Bigr).
  \end{equation*}
  Inserting everything concludes the proof of the proposition.
\end{proof}

\begin{corollary}
  The expected value of $X_{n+1,r}$ is explicitly given by
  \begin{equation*}
    \E X_{n+1,r}=\frac{1}{C_{n}} \bigg(\binom{2n}{n} + \sum_{j\geq 0} \biggl(\binom{2n}{n -
      (j+1)(r+2) + 1} - \binom{2n}{n - j(r+2) - 1}\biggr) \bigg)
  \end{equation*}
\end{corollary}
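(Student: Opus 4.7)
The plan is to mirror the approach of Corollary~\ref{corollary:leaves-exact-exp-val}. Starting from the expression
\[
\E X_{n+1,r} = \frac{1}{C_n}[z^{n+1}]\frac{u(1+u^{r+1})}{(1+u)(1-u^{r+2})}
\]
given in Lemma~\ref{lemma:moments-cut-old-paths}, I would write the coefficient as a contour integral around $z=0$ and change variables via $z=u/(1+u)^2$. A short computation gives $dz/z^{n+2} = (1-u)(1+u)^{2n+1}u^{-n-2}\,du$, and after cancelling one factor of $1+u$ against the denominator of the integrand, the task reduces to extracting
\[
C_n\E X_{n+1,r} = [u^n]\frac{(1+u^{r+1})(1-u)(1+u)^{2n}}{1-u^{r+2}}.
\]

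Next I would split the rational factor by writing
\[
\frac{1+u^{r+1}}{1-u^{r+2}} = \sum_{j\ge 0} u^{j(r+2)} + \sum_{j\ge 0} u^{(j+1)(r+2)-1},
\]
exchange summation and coefficient extraction, and apply the elementary identity $[u^k](1-u)(1+u)^{2n} = \binom{2n}{k} - \binom{2n}{k-1}$. This produces four series indexed by $j$, call them $A,B,C,D$ in the order of the four binomial terms.

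The last step is bookkeeping. The series $A = \sum_{j\ge 0}\binom{2n}{n-j(r+2)}$ coming from $1/(1-u^{r+2})$ and the series $D = \sum_{j\ge 0}\binom{2n}{n-(j+1)(r+2)}$ coming from $u^{r+1}/(1-u^{r+2})$ combined with the $-u$ piece of $(1-u)$ differ by a single index shift $k=j+1$, so $A-D = \binom{2n}{n}$. The remaining two sums $-B$ and $C$ are exactly the two binomial sums appearing in the statement. I do not expect any real obstacle; every step is formal and parallels the template already established in Corollary~\ref{corollary:leaves-exact-exp-val}, the only care being to choose the contour in $u$ small enough for the geometric expansion of $1/(1-u^{r+2})$ to be valid, which is automatic since $z=u/(1+u)^2$ sends a small neighbourhood of $0$ to a small neighbourhood of $0$.
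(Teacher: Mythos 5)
Your proposal is correct and is exactly the paper's argument: the paper's proof reads off $C_n\E X_{n+1,r}=[z^{n+1}]\frac{u(1+u^{r+1})}{(1+u)(1-u^{r+2})}$ from Lemma~\ref{lemma:moments-cut-old-paths} and then says to ``proceed as in Corollary~\ref{corollary:leaves-exact-exp-val}'', which is precisely the contour substitution $z=u/(1+u)^2$, geometric expansion of $(1-u^{r+2})^{-1}$, and extraction of $[u^k](1-u)(1+u)^{2n}$ that you carry out. Your explicit bookkeeping of the four binomial series (with $A-D=\binom{2n}{n}$) correctly fills in the detail the paper leaves implicit.
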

\begin{proof}
From Lemma~\ref{lemma:moments-cut-old-paths}, we obtain
\begin{align*}
  C_{n}\E
  X_{n+1,r}=[z^{n+1}]\frac{(1+u^{r+1})u}{(1+u)(1-u^{r+2})}
\end{align*}
and proceeding as in Corollary~\ref{corollary:leaves-exact-exp-val} we
obtain the given result.
\end{proof}

By expanding the expressions in Lemma~\ref{lemma:moments-cut-old-paths}
and using singularity analysis, we obtain the asymptotic growth of the expected value and
the variance.
\begin{theorem}\label{thm:moments-asy-old-paths}
  Let $r\in\N$ be fixed and consider $n\to\infty$. Then the expected size and the corresponding variance of an
  $r$-fold cut plane tree are given by
  \begin{equation*}
    \E X_{n,r} = \frac{2n}{r+2} - \frac{r(r+1)}{3 (r+2)} + O(n^{-1}),
  \end{equation*}
  and
  \begin{equation*}
    \V X_{n,r} = \frac{2r(r+1)}{3(r+2)^{2}}n + O(1).
  \end{equation*}
  For $d\ge 3$, the $d$th factorial moment is
  \begin{equation*}
    \E X_{n,r}^{\underline{d}}=\frac{2^{d - 1} d  }{(2  d - 3)(r + 2)^d} n^{d} + \binom{2d-5}{d-2}\frac{\sqrt{r\pi}\,  d  }{2^{d-3}(r+2)^{d-1/2}} n^{d - \frac{1}{2}} + O(n^{d - 1}).
  \end{equation*}
   All $O$-constants in this theorem depend implicitly on $r$.
\end{theorem}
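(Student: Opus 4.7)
The plan is to carry out singularity analysis on the coefficient-extraction formulas provided by Lemma~\ref{lemma:moments-cut-old-paths}, in direct analogy with the proof of Theorem~\ref{thm:cut-leaves}. First I would locate the dominant singularity. Each integrand is a rational function of $u$ (possibly accompanied by a factor $\sqrt{(1-u^{r})/(1-u^{r+2})}$) whose singularities in $u$ come from the zeros of $1 - u^{r+2}$. Under $z = u/(1+u)^{2}$, as recalled in the proof of Theorem~\ref{thm:cut-leaves}, every $(r+2)$th root of unity is mapped to a real point $\geq 1/4$, and only $u = 1$ gives $z = 1/4$; so $z = 1/4$ is the unique dominant singularity and the analysis reduces to a local expansion around $u = 1$.

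For $d \in \{1, 2\}$ the explicit expressions from the lemma can be expanded directly. Writing $\epsilon = 1 - u$ and using $(1 - u^{r+2}) = (r+2)\epsilon + O(\epsilon^{2})$, $(1+u^{r+1}) = 2 + O(\epsilon)$, etc., the $d = 1$ integrand has expansion $\frac{1}{(r+2)(1-u)} + \text{const} + O(1-u)$, and the $d=2$ integrand has leading term $\frac{2}{(r+2)^{2}(1-u)^{3}}$ with subleading terms needed for the corrections. Converting each $(1-u)^{-\kappa}$ via~\eqref{eq:expansion-u-in-z} and applying singularity analysis together with $C_{n-1}\sim 4^{n-1}/(\sqrt{\pi}\,n^{3/2})$ yields the claimed asymptotic expansions of $\E X_{n,r}$ and $\E X_{n,r}^{\underline{2}}$. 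The variance then follows from $\V X_{n,r} = \E X_{n,r}^{\underline{2}} + \E X_{n,r} - (\E X_{n,r})^{2}$.

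For $d \geq 3$, the general formula from the lemma has the shape $\frac{d!}{C_{n-1}}[z^{n}]\frac{1-u}{1+u}\,A(u)^{d}\,\tilde N_{d-1}(B(u))$ for explicit $A$ and $B$. I would expand each factor around $u = 1$: $A(u) = \frac{2}{(r+2)(1-u)^{2}} + O((1-u)^{-1})$, where the subleading pole carries $\sqrt{r(r+2)}$; $\frac{1-u}{1+u} = \frac{1-u}{2} + O((1-u)^{2})$; and, since $B(u) \to 1$ as $u \to 1$, $\tilde N_{d-1}(B(u)) = C_{d-1} + \tilde N_{d-1}'(1)\,(B(u) - 1) + O((B(u)-1)^{2})$ with $\tilde N_{d-1}'(1) = \tfrac{1}{2}\binom{2d-2}{d-1}$ from Proposition~\ref{prop:narayana-derivative}. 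Retaining the coefficients of $(1-u)^{-(2d-1)}$ and $(1-u)^{-(2d-2)}$ in the product and transferring to the $(1-4z)$-scale via~\eqref{eq:expansion-u-in-z} produces both half-integer and integer exponents; the leading half-integer contribution governs the $n^{d}$ term, while the integer-exponent pieces yield the $n^{d-1/2}$ correction after normalization by $C_{n-1}$.

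The main obstacle will be the bookkeeping of the $n^{d-1/2}$ coefficient, which receives contributions both from the subleading $(1-4z)^{-(d-1)}$ piece of the expansion of $(1-u)^{-(2d-1)}$ (stemming from the leading Laurent coefficient of $A^{d}$) and from the leading $(1-4z)^{-(d-1)}$ piece of $(1-u)^{-(2d-2)}$ (stemming from the first subleading term of the product), together with the $\sqrt{r(r+2)}$ factor that appears in the derivative of $B$ at $u=1$. Combining these and tracking the resulting $\sqrt{r\pi}$ prefactor and the combinatorial factor $\binom{2d-5}{d-2}$ is the technically delicate step; as indicated in the introduction, this symbolic work is performed in the SageMath worksheet \texttt{factorial\_moments\_old\_paths.ipynb}, on which the precise form of the stated expansion relies.
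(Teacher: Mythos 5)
Your proposal is exactly the paper's own proof: the paper likewise just expands the expressions of Lemma~\ref{lemma:moments-cut-old-paths} locally at $u=1$ (the unique preimage of the dominant singularity $z=1/4$), transfers via~\eqref{eq:expansion-u-in-z} and singularity analysis, normalizes by $C_{n-1}$, and delegates the symbolic bookkeeping to the worksheet \texttt{factorial\_moments\_old\_paths.ipynb}; your treatment of $d\in\{1,2\}$ and of the variance is complete.

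One concrete warning for $d\ge 3$, though: if you actually carry out the leading-order step you describe --- base of the $d$th power $\sim \frac{2}{(r+2)(1-u)^{2}}$, argument of $\tilde N_{d-1}$ tending to $1$ so that $\tilde N_{d-1}\to C_{d-1}$, prefactor $\frac{1-u}{1+u}\sim\frac{1-u}{2}$ --- you get the singular part $\frac{d!\,2^{d-1}C_{d-1}}{(r+2)^{d}}(1-u)^{-(2d-1)}$, and after transfer and division by $C_{n-1}$ the coefficient of $n^{d}$ comes out as $\frac{2^{d}}{(r+2)^{d}}$, not $\frac{2^{d-1}d}{(2d-3)(r+2)^{d}}$. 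The value $\frac{2^{d}}{(r+2)^{d}}$ is in fact forced by the first two assertions of the theorem: since $0\le X_{n,r}\le n$, $\E X_{n,r}\sim\frac{2n}{r+2}$ and $\V X_{n,r}=O(n)$ give concentration, hence $\E X_{n,r}^{\underline{d}}=\bigl(\tfrac{2}{r+2}\bigr)^{d}n^{d}+o(n^{d})$ (a smaller constant would even violate Jensen's inequality). So your method is the right one, but do not expect the computation to reproduce the displayed $n^{d}$ coefficient for $d\ge3$; the discrepancy lies in the stated formula (and the $n^{d-1/2}$ term should then be re-derived from the $(1-u)^{-(2d-2)}$ Laurent coefficient together with the subleading term of~\eqref{eq:expansion-u-in-z}, exactly as you outline), not in your approach.
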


Besides the analysis of the tree size, we are also interested in how the
numbers of nodes represented by $\blacksquare$ and by $\innernode$ develop when the tree is
reduced repeatedly. Formally, this means that we consider the random
variables $X_{n,r}^{\blacksquare}$ and $X_{n,r}^{\filledcirc}$ counting the number of old leaves and
the number of all nodes that are neither old leaves nor parents thereof, respectively. By
construction, the relation
\begin{equation}\label{eq:oldpaths:split-leaves-inner}
  X_{n,r} = 2\cdot X_{n,r}^{\blacksquare} + X_{n,r}^{\filledcirc}
\end{equation}
holds.

The bivariate generating functions corresponding to these random variables can be obtained
directly from Proposition~\ref{prop:old-paths:gf}. We have
\[ G_{r}^{\blacksquare}(z,v) = G_{r}(z,1,v),\qquad G_{r}^{\filledcirc}(z,v) = G_{r}(z,v,1). \]

In contrast to $X_{n,r}$, the $d$th factorial moments for
$X_{n,r}^{\blacksquare}$ and $X_{n,r}^{\filledcirc}$ have simpler expressions.

\begin{proposition}\label{prop:old-paths:factorial}
  Let $d\in\N$. Then the $d$th factorial moments of $X_{n,r}^{\blacksquare}$ and
  $X_{n,r}^{\filledcirc}$ are given by
  \begin{equation}\label{eq:old-paths:factorial-leaves}
    \E {X_{n,r}^{\blacksquare}}^{\underline{d}} = \frac{(2d-2)^{\underline{d-1}}}{C_{n-1}}
    [z^{n}]\frac{1-u}{1+u} \frac{u^{rd + 2d}}{(1-u^{r+2})^{2d}}
  \end{equation}
  and
  \begin{equation}\label{eq:old-paths:factorial-rest:d1}
    \E {X_{n,r}^{\filledcirc}} = \frac{1}{C_{n-1}} [z^{n}] \frac{u (1 - u^{r+1}) (1 +
      u^{r+2})}{(1+u) (1 - u^{r+2})^{2}}
  \end{equation}
  as well as
  \begin{equation}
    \label{eq:old-paths:factorial-rest}
    \E {X_{n,r}^{\filledcirc}}^{\underline{d}} = \frac{1}{C_{n-1}} [z^{n}] \frac{(1-u^{r+1})^{d} u^{d}
      2^{d} d!}{(1-u)^{d-1}(1+u)(1-u^{r+2})^{2d}} \tilde N_{d-1}(u^{r+2})
  \end{equation}
  for $d > 1$.
\end{proposition}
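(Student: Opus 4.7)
The plan is to apply essentially the same strategy as in the proof of Lemma~\ref{lemma:moments-cut-old-paths}: both factorial moments are read off from the Taylor expansion in $q = v-1$ of $G_r^{\blacksquare}(z, 1+q)$ and $G_r^{\filledcirc}(z, 1+q)$, respectively. Using Proposition~\ref{prop:old-paths:gf}, I retain the abbreviations
\[
a = \frac{u(1-u^{r+1})}{(1+u)(1-u^{r+2})},\qquad b = \frac{u^{r+2}(1-u)^2}{(1+u)^2(1-u^{r+2})^2},\qquad \Delta = \frac{1-u}{1+u}
\]
from that proof, and exploit the identity $(1-2a)^2 - 4b = \Delta^2$ established there as the essential algebraic input.

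For \eqref{eq:old-paths:factorial-leaves}, by \eqref{eq:gf-old-leaves} we have
\[
G_r^{\blacksquare}(z, 1+q) = L(a, b(1+q)) = \tfrac12\bigl(1 - \Delta\sqrt{1 - 4bq/\Delta^2}\,\bigr),
\]
since $b$ enters only linearly under the square root. Extracting $[q^d]$ for $d\geq 1$ yields a single generalized binomial coefficient, and the identity $d!\binom{2d}{d}/(2(2d-1)) = (2d-2)^{\underline{d-1}}$ together with the simplification $b^d/\Delta^{2d-1} = (1-u)u^{(r+2)d}/((1+u)(1-u^{r+2})^{2d})$ immediately yields the claimed form. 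Note that no Narayana polynomial appears in this case because $b$ is independent of $v$.

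For \eqref{eq:old-paths:factorial-rest:d1} and \eqref{eq:old-paths:factorial-rest}, I work with $G_r^{\filledcirc}(z, 1+q) = L(a(1+q), b)$. Expanding the discriminant inside the square root as $\Delta^2 - 4a(1-2a)q + 4a^2q^2$ and choosing $\alpha, \beta$ such that $\alpha + \beta = 2a(1-2a)/\Delta^2$ and $\alpha - \beta = 2a/\Delta$, I obtain the decomposition
\[
L(a(1+q), b) = \frac{1 - \Delta}{2} + \frac{\Delta(\alpha - \beta)}{2}\, q + \Delta\, T(\alpha q, \beta q),
\]
in complete analogy with the decomposition used in the proof of Lemma~\ref{lemma:moments-cut-old-paths}. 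For $d = 1$ differentiation combined with $\alpha + \beta = 2a(1-2a)/\Delta^2$ yields $[q^1]L = a(1-2a)/\Delta$, which gives \eqref{eq:old-paths:factorial-rest:d1}. For $d\geq 2$, applying \eqref{eq:gf-narayana:quotient} produces $[q^d]\Delta T(\alpha q, \beta q) = \Delta\alpha^d \tilde N_{d-1}(\beta/\alpha)$.

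The main computational obstacle is then to simplify $\alpha$, $\beta$, $\Delta\alpha^d$, and $\beta/\alpha$ explicitly in terms of $u$. Starting from the identity $1 - 2a = (1-u)(1+u^{r+2})/((1+u)(1-u^{r+2}))$, one computes
\[
1 - 2a + \Delta = \frac{2(1-u)}{(1+u)(1-u^{r+2})},\qquad 1 - 2a - \Delta = \frac{2u^{r+2}(1-u)}{(1+u)(1-u^{r+2})},
\]
and after cancellation this gives $\alpha = 2u(1-u^{r+1})/((1-u)(1-u^{r+2})^2)$, $\beta = u^{r+2}\alpha$, so that $\beta/\alpha = u^{r+2}$, and
\[
\Delta\alpha^d = \frac{2^d u^d (1-u^{r+1})^d}{(1+u)(1-u)^{d-1}(1-u^{r+2})^{2d}}.
\]
Multiplying by $d!$ and dividing by $C_{n-1}$ then produces \eqref{eq:old-paths:factorial-rest}, completing the proof plan.
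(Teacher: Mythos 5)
Your proposal is correct and follows essentially the same route as the paper: for \eqref{eq:old-paths:factorial-leaves} you exploit $(1-2a)^2-4b=\Delta^2$ to reduce $L(a,b(1+q))$ to a pure binomial series in $q$ (the paper does the equivalent computation by differentiating $L(a,bv)$ directly), and for the $\filledcirc$ moments your decomposition $L(a(1+q),b)=\frac{1-\Delta}{2}+\frac{\Delta(\alpha-\beta)}{2}q+\Delta T(\alpha q,\beta q)$ with $\alpha+\beta=2a(1-2a)/\Delta^2$, $\alpha-\beta=2a/\Delta$ is exactly the paper's identity, with the same resulting values $\alpha=2u(1-u^{r+1})/((1-u)(1-u^{r+2})^2)$ and $\beta/\alpha=u^{r+2}$. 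All the intermediate simplifications you state check out.
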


\begin{proof}[Proof of Proposition~\ref{prop:old-paths:factorial}]
  As in the proof of Lemma~\ref{lemma:moments-cut-old-paths}, we use the abbreviations
  \begin{equation*}
    a=\frac{u(1-u^{r+1})}{(1+u)(1-u^{r+2})},\qquad
    b=\frac{u^{r+2}(1-u)^2}{(1+u)^2(1-u^{r+2})^2}, \qquad
    \Delta = \frac{1-u}{1+u}.
  \end{equation*}

  Then, using \eqref{eq:L-sqrt-z}, we get
  \begin{equation*}
    \frac{\partial^d}{\partial v^d}G_r^\blacksquare(z, v)=
    \frac{\partial^d}{\partial v^d} L(a, bv)
    =-\frac{1-2a}{2}\Bigl(\frac12\Bigr)^{\underline{d}}\Bigl(1-\frac{4bv}{(1-2a)^2}\Bigr)^{1/2-d}\frac{(-4b)^d}{(1-2a)^{2d}}.
  \end{equation*}
  Setting $v=1$ and using the fact that
  \begin{equation}\label{eq:miracle-old-paths}
    (1-2a)^2-4b=\Delta^2
  \end{equation}
  proves \eqref{eq:old-paths:factorial-leaves}.

  For deriving $\partial^d/(\partial v)^d G_r^{\filledcirc}(z, v)$, we proceed
  as in the proof of Proposition~\ref{prop:cut-leaves:moments}. The crucial
  identity is
  \begin{equation*}
    L(a(1+q), b) = \Delta\frac{\frac1\Delta-1+(\alpha'-\beta')q}2 + \Delta T(\alpha'q, \beta'q)
  \end{equation*}
  with
  \begin{equation*}
    \alpha'=\frac{2u(1-u^{r+1})}{(1-u^{r+2})^2(1-u)}, \qquad
    \frac{\beta'}{\alpha'} = u^{r+2}.
  \end{equation*}
  This implies \eqref{eq:old-paths:factorial-rest:d1} and \eqref{eq:old-paths:factorial-rest}.

  \ifdetails
  In fact, we have
  \begin{align*}
    L(a(1+q), b) &=
    \frac{1-\sqrt{(1-2a-2aq)^2 - 4b}\,}{2}\\
    &=\frac{1-\sqrt{(1-2a)^2-4b -2q(2a-4a^2) + q^2 4a^2}\,}2.
  \end{align*}
  We use \eqref{eq:miracle-old-paths} and choose $\alpha'$ and $\beta'$ such that
  \begin{equation*}
    \alpha'+\beta'= \frac{2a-4a^2}{\Delta^2}, \qquad
    \alpha'-\beta' = \frac{2a}{\Delta}.
  \end{equation*}
  This results in
  \begin{align*}
    L(a(1+q), b)&=
                  \Delta\frac{\frac1\Delta-1}2 +
                  \Delta\frac{1-\sqrt{1-2q(\alpha'+\beta')+q^2(\alpha'-\beta')^2}\,}2\\
                &=
                  \Delta\frac{\frac1\Delta-1+(\alpha'-\beta')q}2 + \Delta\frac{1-(\alpha'-\beta')q-\sqrt{1-2q(\alpha'+\beta')+q^2(\alpha'-\beta')^2}\,}2\\
    &=\Delta\frac{\frac1\Delta-1+(\alpha'-\beta')q}2 + \Delta T(\alpha'q, \beta'q).
  \end{align*}
  By \eqref{eq:gf-narayana:quotient}, extracting coefficients leads to
  \begin{align*}
    C_{n-1}\frac1{d!}\E {X_{n,r}^{\filledcirc}}^{\underline{d}}
    &=[z^n]\Delta
    \Bigl(\frac{\alpha'-\beta'}2\iverson{d=1}+\alpha'^d \tilde N_{d-1}\Bigl(\frac{\beta'}{\alpha'}\Bigr) \Bigr)
  \end{align*}
  for $d\ge 1$. As
  \begin{equation*}
    \frac{\alpha'+\beta'}{2} = \frac{u(1-u^{r+1})(1+u^{r+2})}{(1-u^{r+2})^2(1-u)},
  \end{equation*}
  the result follows.
  \fi
\end{proof}

As in Section~\ref{sec:cut-leaves:analysis}, the above proof exhibits some
identities:

\begin{remark}\label{lem:old-leaves:series-identities}
  For $d\in \Z_{\geq 1}$, the power series identities
\begin{equation}\label{eq:old-leaves:series-identities:leaves}
  \sum_{\substack{n\geq 0 \\ k\geq 1}} \frac{u^{n+k} x^{k} (1-x)^{n}
    (1-u)^{2k}}{(1+u)^{n+2k} (1-ux)^{n+2k}} k^{\underline{d}} C_{k-1} \binom{n+2k-2}{n} 2^{n} =
  (2d-2)^{\underline{d-1}} \frac{u^{d} x^{d} (1-u)}{(1-ux)^{2d} (1+u)}
\end{equation}
and
\begin{equation}\label{eq:old-leaves:series-identities:rest}
  \sum_{\substack{n\geq 0\\ k\geq 1}} \frac{u^{n+k} x^{k} (1-x)^{n} (1-u)^{2k}}{(1+u)^{n+2k} (1-ux)^{n+2k}}
  n^{\underline{d}} C_{k-1} \binom{n+2k-2}{n} 2^{n} = \frac{(1-x)^{d} u^{d} 2^{d}
    d!\, \tilde N_{d-1}(ux)}{(1-u)^{d-1} (1+u) (1-ux)^{2d}}
\end{equation}
hold.
\end{remark}
\begin{proof}
  We replace $u^{r+1}$ by $x$ in the proof of
  Proposition~\ref{prop:old-paths:factorial} and expand $L$ by
  \eqref{eq:L-full-expansion}.
\end{proof}

The asymptotic behavior for the factorial moments of $X_{n,r}^{\blacksquare}$ and
$X_{n,r}^{\filledcirc}$ can now be extracted quite straightforward by means of singularity
analysis from the representation given in Proposition~\ref{prop:old-paths:factorial}.

\begin{theorem}\label{thm:old-paths:factorial-asy}
  Let $r\in \N_{0}$ be fixed and consider $n\to\infty$. Then the expected number of old
  leaves as well as the expected number of nodes that are neither old leaves nor parents
  thereof in an $r$-fold ``old path''-reduced tree and the corresponding variances are
  given by the asymptotic expansions
  \begin{align}
    \label{eq:old-paths:factorial-asy:exp}
    \E X_{n,r}^{\blacksquare} &= \frac{1}{(r+2)^{2}}n + \frac{(r+3)(r+1)}{6(r+2)^{2}} + O(n^{-1}),\\ \E X_{n,r}^{\filledcirc}& =
    \frac{2(r+1)}{(r+2)^{2}} n - \frac{(r^{2} + 3r + 3)(r+1)}{3(r+2)^{2}} + O(n^{-1}),\nonumber\\
    \label{eq:old-paths:factorial-asy:var}
    \V X_{n,r}^{\blacksquare} &= \frac{(r+3)(r+1)}{3(r+2)^{4}}n +
    O(1),\\ 
\V X_{n,r}^{\filledcirc} &= \frac{2(r^{3} +
      4r^{2} + 6r + 6)(r+1)}{3(r+2)^{4}} n + O(1).\nonumber
  \end{align}
  Additionally, for fixed $d\geq 2$ the behavior of the factorial moments of
  $X_{n,r}^{\blacksquare}$ and $X_{n,r}^{\filledcirc}$ is given by
  \begin{equation}
    \label{eq:old-paths:factorial-asy:leaves-full}
    \E {X_{n,r}^{\blacksquare}}^{\underline{d}} = \frac{1}{(r+2)^{2d}} + O(n^{d-1})
  \end{equation}
  and
  \begin{equation}
    \label{eq:old-paths:factorial-asy:rest-full}
    \E {X_{n,r}^{\filledcirc}}^{\underline{d}} = \frac{2^{d} (r+1)^{d}}{(r+2)^{2d}} n^{d}
    + O(n^{d-1}),
  \end{equation}
  respectively.
   All $O$-constants in this theorem depend implicitly on $r$.
\end{theorem}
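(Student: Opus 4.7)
The plan is to apply singularity analysis (Flajolet--Odlyzko) to the explicit coefficient representations furnished by Proposition~\ref{prop:old-paths:factorial}, exactly as in the proof of Theorem~\ref{thm:cut-leaves}. Under the substitution $z=u/(1+u)^{2}$ we have $1-4z=((1-u)/(1+u))^{2}$, so the dominant $z$-singularity at $z=1/4$ corresponds to $u=1$, and this is the only singularity on the circle $|z|=1/4$ for each of the rational-in-$u$ integrands in \eqref{eq:old-paths:factorial-leaves}--\eqref{eq:old-paths:factorial-rest}; this follows from the same considerations as in the analogous discussion in Section~\ref{sec:cut-leaves:analysis}.

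First I would expand each integrand as a Laurent series around $u=1$. The building blocks are $1-u^{m} = (1-u)\bigl(m - \tfrac{m(m-1)}{2}(1-u) + O((1-u)^{2})\bigr)$ for $m\in\{r+1,r+2\}$, together with $\tfrac{1-u}{1+u} = \tfrac{1}{2}(1-u) + \tfrac{1}{4}(1-u)^{2} + O((1-u)^{3})$, and, for the higher moments of $X_{n,r}^{\filledcirc}$, Taylor expansion of $\tilde N_{d-1}(u^{r+2})$ at $u=1$, where $\tilde N_{d-1}(1) = C_{d-1}$ and the derivatives can be obtained from Proposition~\ref{prop:narayana-derivative}. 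This yields, for each generating function, an expansion of the form $\sum_{j\ge -j_{0}} c_{j}(r,d)(1-u)^{j}$ carried out to sufficiently many terms.

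Next I would convert these into expansions in half-powers of $(1-4z)$ by applying \eqref{eq:expansion-u-in-z}, and then invoke singularity analysis to obtain $[z^{n}]$-asymptotics of the form $n^{\alpha}\cdot 4^{n}\cdot (\text{const})/\Gamma(\cdot)$. Dividing by the known expansion of $C_{n-1}$ gives the factorial moments. For the expectations \eqref{eq:old-paths:factorial-asy:exp}, the leading $(1-u)^{-1}$ pole produces the $n$-term and the next two orders in $(1-u)$ produce the $O(1)$-constant and the $O(n^{-1})$ error. For the leading-order behavior of the $d$th factorial moments in \eqref{eq:old-paths:factorial-asy:leaves-full}--\eqref{eq:old-paths:factorial-asy:rest-full}, one only needs the principal part of each integrand at $u=1$: in \eqref{eq:old-paths:factorial-leaves} this is $\tfrac{1}{2(r+2)^{2d}}(1-u)^{-(2d-1)}(2d-2)^{\underline{d-1}}$, and in \eqref{eq:old-paths:factorial-rest} it is $\tfrac{2^{d}d!\,C_{d-1}(r+1)^{d}}{2(r+2)^{2d}}(1-u)^{-(2d-1)}$, each of which converts into a $(1-4z)^{-(2d-1)/2}$ contribution yielding the stated $n^{d}$-coefficients after normalization. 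The variances \eqref{eq:old-paths:factorial-asy:var} follow from $\V X = \E X^{\underline{2}} + \E X - (\E X)^{2}$ applied to the $d=1,2$ expansions.

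The main obstacle is bookkeeping rather than conceptual: for the variance one needs the expansion of $\E X^{\underline{2}}$ through order $(1-4z)^{-1/2}$ and of $\E X$ through $(1-4z)^{1/2}$, because the leading $n^{2}$-terms cancel in $\E X^{\underline{2}} - (\E X)^{2}$, and sub-sub-leading errors must be controlled to obtain the $O(1)$ remainder. The computation is straightforward but tedious, which is why the authors delegate the $d\ge 3$ factorial moments in Theorem~\ref{thm:moments-asy-old-paths} to SageMath (\texttt{factorial\_moments\_old\_paths.ipynb}); the same machinery handles the present theorem uniformly, with Proposition~\ref{prop:narayana-derivative} and the standard Pólya--Knuth singularity dictionary doing all the real work.
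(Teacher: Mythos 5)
Your proposal is correct and follows exactly the route the paper intends: the paper gives no separate proof of Theorem~\ref{thm:old-paths:factorial-asy} beyond the remark that the asymptotics follow by singularity analysis from Proposition~\ref{prop:old-paths:factorial}, and your expansion of the integrands around $u=1$, conversion via \eqref{eq:expansion-u-in-z}, normalization by $C_{n-1}$, and use of $\V X=\E X^{\underline 2}+\E X-(\E X)^2$ is precisely that computation (your principal parts check out, e.g.\ $(2d-2)^{\underline{d-1}}\,d!^{-1}$ versus $d!\,C_{d-1}$ both collapsing to the stated leading coefficients via $\Gamma(d-\tfrac12)=\sqrt{\pi}\,(2d-2)!/(4^{d-1}(d-1)!)$).
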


\subsection{Total number of old paths}
\label{sec:old-paths:total}

Similarly to our approach for counting the total number of paths required to construct a
given tree from Section~\ref{sec:paths:totalnumber}, we can also analyze the number of
``old path''-segments within a random tree of size $n$. Formally, this corresponds to an
analysis of the random variable $S_{n} \coloneqq \sum_{r\geq 0} X_{n,r}^{\blacksquare}$.

\begin{theorem}\label{thm:old-paths:total}
  The expected number of ``old path'' segments within a uniformly random tree of size $n$
  is given asymptotically by
  \begin{equation}
    \label{eq:old-paths:total:asy}
    \E S_{n} = \Big(\frac{\pi^{2}}{6} - 1\Big) n - \frac{\pi^{2}}{36} - \frac{1}{12} -
    \frac{\pi^{2}}{120n} + O(n^{-2})
  \end{equation}
  for $n\to\infty$.
\end{theorem}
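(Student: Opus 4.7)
My plan is to combine the $d=1$ case of Proposition~\ref{prop:old-paths:factorial} with a Mellin transform analysis analogous to the one used for Theorem~\ref{thm:paths:total}, and then transfer to coefficient asymptotics via singularity analysis. Applying \eqref{eq:old-paths:factorial-leaves} with $d=1$, exchanging summation with coefficient extraction (justified by non-negativity), and reindexing with $k=r+2$, I obtain
\[
  \E S_n = \frac{1}{C_{n-1}}[z^n]\,\frac{1-u}{1+u}\,\Sigma(u),\qquad \Sigma(u)\coloneqq\sum_{k\geq 2}\frac{u^{k}}{(1-u^{k})^{2}},
\]
with $z=u/(1+u)^2$. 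Substituting $u=e^{-t}$ turns $\Sigma(u)$ into the harmonic sum $f(t)=\sum_{k\geq 2}e^{-kt}/(1-e^{-kt})^{2}$, whose Mellin transform, using $e^{-x}/(1-e^{-x})^2=\sum_{j\geq 1}je^{-jx}$, is
\[
  f^{*}(s) = \Gamma(s)\bigl(\zeta(s)-1\bigr)\zeta(s-1)
\]
with fundamental strip $\langle 2,\infty\rangle$.

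Next, I would shift the inverse Mellin contour from $\Re s=3$ leftward past $\Re s=-5/2$. The exponential decay of $\Gamma$ together with the polynomial growth of $\zeta$ on vertical lines ensures both the validity of the shift and a tail estimate of order $O(t^{5/2})$, analogous to the reasoning around \eqref{eq:totalpaths:mellin-estimate}. The simple poles encountered lie at $s\in\{2,1,0,-2\}$, and a direct residue calculation yields
\[
  f(t) = \bigl(\tfrac{\pi^{2}}{6}-1\bigr)t^{-2} - \tfrac{1}{2}t^{-1} + \tfrac{1}{8} - \tfrac{t^{2}}{240} + O(t^{4})
\]
as $t\to 0^+$ on an appropriate sector; crucially, the residues at $s=-1$ and $s=-3$ vanish thanks to the trivial zeros $\zeta(-2)=\zeta(-4)=0$, so no intermediate singular terms intrude between those listed.

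To convert this into a singular expansion at $z=1/4$, I would use the identity $\tanh(t/2)=(1-u)/(1+u)=\sqrt{\tau}\,$ (with $\tau\coloneqq 1-4z$), which yields $t=2\xi+\tfrac{2}{3}\xi^{3}+\tfrac{2}{5}\xi^{5}+O(\xi^{7})$ where $\xi\coloneqq\sqrt{\tau}\,$. Substituting and multiplying by the prefactor $\frac{1-u}{1+u}=\xi$ produces half-integer and integer powers of $\tau$; since the integer powers are analytic at $z=1/4$, only the half-integer ones matter for singularity analysis, and one finds
\[
  \frac{1-u}{1+u}\,\Sigma(u) = \frac{\pi^{2}/6-1}{4}\,\tau^{-1/2} + \Bigl(\frac{1}{8}-\frac{\pi^{2}/6-1}{6}\Bigr)\tau^{1/2} - \frac{\pi^{2}}{360}\,\tau^{3/2} + (\text{analytic}) + O(\tau^{5/2}).
\]
Applying the exact identities $[z^n](1-4z)^{-1/2}/C_{n-1}=4n-2$, $[z^n](1-4z)^{1/2}/C_{n-1}=-2$, and $[z^n](1-4z)^{3/2}/C_{n-1}=6/(2n-3)$ and simplifying yields \eqref{eq:old-paths:total:asy}.

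The main obstacle is the bookkeeping in the final substitution: the coefficient $-\pi^{2}/360$ of $\tau^{3/2}$ emerges only after combining the $\xi^{3}$-correction in the expansion of $t^{-2}$ with the leading part of $t^{2}$, where the $-1$ inside $\pi^{2}/6-1$ exactly cancels the contribution from $-t^{2}/240$ to leave $-\pi^{2}/360$. Verifying the vanishing residues at $s=-1$ and $s=-3$ is also essential, since otherwise additional half-integer singular terms would appear and spoil the shape of the stated expansion.
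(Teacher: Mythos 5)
Your proposal is correct and follows essentially the same route as the paper: the identical starting identity $\E S_n = \frac{1}{C_{n-1}}[z^n]\frac{1-u}{1+u}\sum_{r\ge 0}\frac{u^{r+2}}{(1-u^{r+2})^2}$, the same Mellin transform $\Gamma(s)\zeta(s-1)(\zeta(s)-1)$ with the same residues, and the same transfer back to $z=1/4$ (your final substitution and the exact coefficient identities are just more explicit than the paper's). One small correction: shifting the contour only past $\Re s = -5/2$ justifies a remainder $O(t^{5/2})$, which after multiplication by the prefactor $\sqrt{1-4z}$ and singularity analysis gives only $O(n^{-5/4})$ rather than the claimed $O(n^{-2})$; to obtain the stated precision you must push the contour to $\Re s=-5$, using the vanishing of the residue at $s=-3$ (which you already note) and absorbing the nonzero residue at $s=-4$ into the $O(t^{4})$ error, exactly as the paper does.
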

\begin{proof}
  As we have $S_{n} = \sum_{r\geq 0} X_{n,r}^{\blacksquare}$, we can
  use~\eqref{eq:old-paths:factorial-leaves} to write
  \[ \E S_{n} = \sum_{r\geq 0} \E X_{n,r}^{\blacksquare} = \frac{1}{C_{n-1}} [z^{n}]
    \frac{1-u}{1+u} \sum_{r\geq 0} \frac{u^{r+2}}{(1-u^{r+2})^{2}}. \]
  The main part of this analysis consists of determining an appropriate expansion of the
  sum in the last equation via the Mellin transform.

  By setting $u = e^{-t}$ and by means of expanding via the geometric series, we find
  \[ \sum_{r\geq 0} \frac{u^{r+2}}{(1-u^{r+2})^{2}} = \sum_{r,\lambda \geq 0} \lambda
    u^{\lambda(r+2)} = \sum_{r,\lambda\geq 0} \lambda e^{-t \lambda (r+2)} \eqqcolon f(t).  \]
  It is easy to determine the corresponding Mellin transform
  \[ f^{*}(s) = \Gamma(s) \zeta(s-1) (\zeta(s) - 1) \]
  with fundamental strip $\langle 2, \infty\rangle$. The poles of
  $f^{*}(s)$ are located at $s\in \{2,1\}\cup -2\N_{0}$. As this function behaves very nicely
  along vertical lines because of the exponential decay and the polynomial growth of the
  gamma function and the zeta function, respectively, we can use the inversion theorem
  to find
  \[ f(t) = \frac{1}{2\pi i} \int_{3-i\infty}^{3+i\infty} f^{*}(s) t^{-s}~ds \]
  for $t \to 0$. Analyticity in a larger (complex) region can be obtained analogously to the
  approach in the proof of Theorem~\ref{thm:paths:total}.

  Shifting the line of integration to $\Re(s) = -5$ and collecting residues, we find
  \[ f(t) = \sum_{p\in \{2,1,0,-2,-4\}} \Res(f^{*}(s), s=p)  t^{-p} +
    \frac{1}{2\pi i}\int_{-5-i\infty}^{-5+i\infty} f^{*}(s)t^{-s}~ds.  \]
  As in the proof of Theorem~\ref{thm:paths:total}, the integral can be
  estimated with an error of $O(|t|^{5})$. However, for the sake of simplicity, we will use
  the contribution from the singularity at $s = -4$ as the expansion error. Effectively,
  we obtain
  \[ f(t) = \Big(\frac{\pi^{2}}{6} - 1\Big) t^{-2} - \frac{1}{2} t^{-1} + \frac{1}{8} -
    \frac{1}{240} t^{2} + O(t^{4}) \]
  for $t\to 0$. Multiplication with the factor $\frac{1-u}{1+u}$, expansion of everything
  in terms of $z\to 1/4$, carrying out singularity analysis, and normalizing the result by
  dividing by $C_{n-1}$ yields the result.
\end{proof}

\section{Future Work}
It seems likely that similar results also hold for reductions where
one can cut a different structure as long as it is allowed to cut a
single leaf. An example is cutting either single leaves or cherries (a
root with two children). At least a formulation as an operator as in~\eqref{eq:cut-leaves-op} seems
possible in general. How much information about the moments and the central limit
theorem can be extracted from that may vary (as it varies in this
article already). Also the case of cutting old structures might be
more difficult to handle in general.

\bibliographystyle{amsplainurl}
\bibliography{bib/cheub}

\providecommand{\Submitted}{Submitted} \providecommand{\availableat}{ available
  at } \providecommand{\alsoavailableat}{ also available at }
  \providecommand{\evavailableat}{earlier version available at }
  \providecommand{\toappearin}{To appear in } \providecommand{\toappear}{to
  appear} \providecommand{\inpreparation}{in preparation}
  \providecommand{\doi}[1]{\href{http://dx.doi.org/#1}{\path{doi:#1}}}
  \providecommand{\lowercaseforams}{}
  \providecommand{\etc}{\emph{etc.}}\def\cprime{$'$}
\providecommand{\bysame}{\leavevmode\hbox to3em{\hrulefill}\thinspace}
\providecommand{\MR}{\relax\ifhmode\unskip\space\fi MR }
\providecommand{\MRhref}[2]{%
  \href{http://www.ams.org/mathscinet-getitem?mr=#1}{#2}
}
\providecommand{\href}[2]{#2}
\begin{thebibliography}{10}

\bibitem{Callan:2012:kreweras-narayana-identity}
David Callan, \href{https://arxiv.org/abs/1203.3999}{\emph{Kreweras's
  {N}arayana number identity has a simple {D}yck path interpretation}},
  arXiv:1203.3999 [math.CO], 2012.

\bibitem{Chen-Deutsch-Elizalde:2006:old-leaves}
William Y.~C. Chen, Emeric Deutsch, and Sergi Elizalde,
  \href{http://dx.doi.org/10.1016/j.ejc.2004.12.008}{\emph{Old and young leaves
  on plane trees}}, European J. Combin. \textbf{27} (2006), no.~3, 414--427.
  \MR{2206476}

\bibitem{Bruijn-Knuth-Rice:1972}
Nicolaas~G. de~Bruijn, Donald~E. Knuth, and Stephen~O. Rice, \emph{The average
  height of planted plane trees}, Graph theory and computing, Academic Press,
  New York, 1972, pp.~15--22. \MR{0505710 (58 \#21737)}

\bibitem{Vauchaussande:1985:thesis}
Mireille~Vauchaussade de~Chaumont, \emph{Nombre de {S}trahler des arbres,
  languages alg\'ebrique et d\'enombrement de structures secondaires en
  biologie mol\'eculaire}, Doctoral thesis, Universit\'{e} de Bordeaux I, 1985.

\bibitem{SageMath:2016:7.4}
The~SageMath Developers, \emph{{SageMath} {M}athematics {S}oftware ({V}ersion
  7.4)}, 2016, \url{http://www.sagemath.org}.

\bibitem{NIST:DLMF:v1.0.13}
\href{http://dlmf.nist.gov/}{\emph{{NIST} {D}igital library of mathematical
  functions}}, \url{http://dlmf.nist.gov/}, Release 1.0.13 of 2016-09-16, 2016,
  Frank W.~J. Olver, Adri B. {Olde Daalhuis}, Daniel W. Lozier, Barry I.
  Schneider, Ronald F. Boisvert, Charles W. Clark, Bruce R. Miller and Bonita
  V. Saunders, eds.

\bibitem{Drmota:2009:random}
Michael Drmota, \href{http://dx.doi.org/10.1007/978-3-211-75357-6}{\emph{Random
  trees}}, SpringerWienNewYork, 2009. \MR{2484382 (2010i:05003)}

\bibitem{Drmota:2015:trees-handbook}
\bysame, \emph{Trees}, Handbook of enumerative combinatorics, Discrete Math.
  Appl. (Boca Raton), CRC Press, Boca Raton, FL, 2015, pp.~281--334.
  \MR{3409345}

\bibitem{Flajolet-Odlyzko:1982}
Philippe Flajolet and Andrew Odlyzko,
  \href{http://dx.doi.org/10.1016/0022-0000(82)90004-6}{\emph{The average
  height of binary trees and other simple trees}}, J. Comput. System Sci.
  \textbf{25} (1982), no.~2, 171--213. \MR{680517 (84a:68056)}

\bibitem{Flajolet-Odlyzko:1990:singul}
\bysame, \href{http://dx.doi.org/10.1137/0403019}{\emph{Singularity analysis of
  generating functions}}, SIAM J. Discrete Math. \textbf{3} (1990), 216--240.
  \MR{MR1039294 (90m:05012)}

\bibitem{Flajolet-Raoult-Vuillemin:1979:register}
Philippe Flajolet, Jean-Claude Raoult, and Jean Vuillemin,
  \href{http://dx.doi.org/10.1016/0304-3975(79)90009-4}{\emph{The number of
  registers required for evaluating arithmetic expressions}}, Theoret. Comput.
  Sci. \textbf{9} (1979), no.~1, 99--125. \MR{535127}

\bibitem{Flajolet-Sedgewick:ta:analy}
Philippe Flajolet and Robert Sedgewick,
  \href{http://dx.doi.org/10.1017/CBO9780511801655}{\emph{Analytic
  combinatorics}}, Cambridge University Press, Cambridge, 2009.

\bibitem{Hackl-Heuberger-Prodinger:2016:reduc-binar}
Benjamin Hackl, Clemens Heuberger, and Helmut Prodinger,
  \href{https://arxiv.org/abs/1612.07286}{\emph{Reductions of binary trees and
  lattice paths induced by the register function}}, arXiv:1612.07286 [math.CO],
  2016.

\bibitem{Hackl-Kropf-Prodinger:2017:iterat-cuttin}
Benjamin Hackl, Sara Kropf, and Helmut Prodinger,
  \href{http://dx.doi.org/10.1137/1.9781611974775.6}{\emph{Iterative cutting
  and pruning of planar trees}}, Proceedings of the Fourteenth Workshop on
  Analytic Algorithmics and Combinatorics (ANALCO) (Philadelphia PA), SIAM,
  2017, pp.~66--72.

\bibitem{Heuberger-Kropf:2016:higher-dimen}
Clemens Heuberger and Sara Kropf,
  \href{https://arxiv.org/abs/1609.09599}{\emph{Higher dimensional quasi-power
  theorem and {B}erry--{E}sseen inequality}}, arXiv:1609.09599 [math.PR], 2016.

\bibitem{Hwang:1998}
Hsien-Kuei Hwang, \href{http://dx.doi.org/10.1006/eujc.1997.0179}{\emph{On
  convergence rates in the central limit theorems for combinatorial
  structures}}, European J. Combin. \textbf{19} (1998), 329--343.

\bibitem{Janson:2006:random-cutting}
Svante Janson, \href{http://dx.doi.org/10.1002/rsa.20086}{\emph{Random cutting
  and records in deterministic and random trees}}, Random Structures Algorithms
  \textbf{29} (2006), no.~2, 139--179. \MR{2245498}

\bibitem{Janson:2016:normality-add-func}
\bysame, \href{http://dx.doi.org/10.1002/rsa.20568}{\emph{Asymptotic normality
  of fringe subtrees and additive functionals in conditioned {G}alton-{W}atson
  trees}}, Random Structures Algorithms \textbf{48} (2016), no.~1, 57--101.
  \MR{3432572}

\bibitem{Kemp:1980:note-stack-size}
Rainer Kemp, \href{http://dx.doi.org/10.1007/BF01933188}{\emph{A note on the
  stack size of regularly distributed binary trees}}, BIT \textbf{20} (1980),
  no.~2, 157--162. \MR{583031}

\bibitem{Kirschenhofer-Prodinger:1988:digital-search-trees}
Peter Kirschenhofer and Helmut Prodinger,
  \href{http://dx.doi.org/10.1016/0304-3975(88)90023-0}{\emph{Further results
  on digital search trees}}, Theoret. Comput. Sci. \textbf{58} (1988), no.~1-3,
  143--154, Thirteenth International Colloquium on Automata, Languages and
  Programming (Rennes, 1986). \MR{963259}

\bibitem{Meir-Moon:1970:cutting-random-trees}
Amram Meir and John~W. Moon, \emph{Cutting down random trees}, J. Austral.
  Math. Soc. \textbf{11} (1970), 313--324. \MR{0284370}

\bibitem{Panholzer:2006:cutting-trees}
Alois Panholzer,
  \href{http://dx.doi.org/10.2989/16073600609486160}{\emph{Cutting down very
  simple trees}}, Quaest. Math. \textbf{29} (2006), no.~2, 211--227.
  \MR{2233368}

\bibitem{Prodinger:1983:height-planted}
Helmut Prodinger,
  \href{http://math.sun.ac.za/~prodinger/pdffiles/height_revisited.pdf}{\emph{The
  height of planted plane trees revisited}}, Ars Combin. \textbf{16} (1983),
  no.~B, 51--55. \MR{737109 (85i:05085)}

\bibitem{Viennot:2002:strah-dyck}
Xavier~G\'{e}rard Viennot,
  \href{http://dx.doi.org/http://dx.doi.org/10.1016/S0012-365X(01)00265-5}{\emph{A
  {S}trahler bijection between {D}yck paths and planar trees}}, Discrete Math.
  \textbf{246} (2002), no.~1--3, 317--329, Formal Power Series and Algebraic
  Combinatorics 1999.

\bibitem{Wagner:2015:centr-limit}
Stephan Wagner,
  \href{http://dx.doi.org/10.1017/S0963548314000443}{\emph{Central limit
  theorems for additive tree parameters with small toll functions}}, Combin.
  Probab. Comput. \textbf{24} (2015), 329--353.

\bibitem{Whittaker-Watson:1996}
Edmund~T. Whittaker and George~N. Watson, \emph{A course of modern analysis},
  Cambridge University Press, Cambridge, 1996, \lowercaseforams Reprint of the
  fourth (1927) edition.

\bibitem{Zeilberger:1990:bijection}
Doron Zeilberger, \href{http://dx.doi.org/10.1016/0012-365X(90)90047-L}{\emph{A
  bijection from ordered trees to binary trees that sends the pruning order to
  the {S}trahler number}}, Discrete Math. \textbf{82} (1990), no.~1, 89--92.
  \MR{1058712}

\end{thebibliography}

\end{document}
